\def\build#1_#2^#3{\mathrel{
\mathop{\kern 0pt#1}\limits_{#2}^{#3}}}
\newtheorem{theorem}{Theorem}
\newtheorem{proposition}[theorem]{Proposition}
\newtheorem{definition}[theorem]{Definition}
\newtheorem{lemma}[theorem]{Lemma}
\def\w{\mathrm{w}}
\def\t{\mathcal{T}}
\def\f{\mathcal{F}}
\def\W{\mathcal{W}}
\def\S{\mathcal{S}}
\def\N{\mathbb{N}}
\def\D{\mathbb{D}}
\def\P{\mathbb{P}}
\def\E{\mathbb{E}}
\def\R{\mathbb{R}}
\def\z{\mathcal{Z}}
\def\n{\mathcal{N}}
\def\ve{{\varepsilon}}
\def\la{\longrightarrow}
\def\ov{\overline}
\def\dd{\mathrm{d}}
\def\wh{\widehat}
\def\wt{\widetilde}
\def\tr{\mathrm{tr}}
\def\Z{\mathbb{Z}}
\def\XX{\mathcal{X}}
\def\II{\mathrm{i}}
\def\bn{\mathbf{n}}
\def\rem{\noindent{\bf Remark. }}
\author{Jean-Fran\c cois Le Gall\footnote{Supported by the ERC Advanced Grant 740943 {\sc GeoBrown}} 
\ and Edwin Perkins\footnote{Supported by an NSERC Canada Discovery Grant}}
\title{A stochastic differential equation for local times \\
of super-Brownian motion}
\date{\small Universit\'e Paris-Saclay and University of British Columbia}
\begin{document}
\maketitle

\begin{abstract}
We show that local times of super-Brownian motion, or of Brownian motion indexed
by the Brownian tree, satisfy an explicit stochastic differential equation. 
Our proofs rely
on both excursion theory for the Brownian snake and tools from the theory of superprocesses.
\end{abstract}

\section{Introduction}

The main purpose of the present work is to derive a stochastic differential equation 
for the local times of super-Brownian motion, or equivalently for the local times of Brownian motion
indexed by the Brownian tree. Consider a super-Brownian motion whose initial value is 
a constant multiple of the Dirac measure at $0$. Informally, the local time $L^a$
at level $a\in\R$ counts how many ``particles'' visit
the point $a$. It was shown recently \cite{Mar} that, although the process $(L^a)_{a\geq 0}$
is not Markov, the pair consisting of $L^a$ and its derivative, $\dot L^a$, is a Markov process
(when $a=0$ we need to consider the right derivative at $0$). However, the transition kernel of
this Markov process is identified in \cite{Mar} in a complicated manner. Our goal here is 
to characterize this transition kernel in terms of a stochastic differential equation. There
is an obvious analogy between our main result  and the classical Ray-Knight theorems
showing that the local times of a linear Brownian motion taken at certain particular stopping 
times, and viewed as processes in the space variable, are squared Bessel processes which satisfy simple stochastic
differential equations. In the setting of the present paper, it is remarkable that the 
relevant stochastic differential equation involves the derivative of the local time.

Let us give a more precise description of our main result. On a given probability space,
we consider a super-Brownian motion $\mathbf{X}=(\mathbf{X}_t)_{t\geq 0}$
with initial value $\mathbf{X}_0=\alpha\,\delta_0$, where $\alpha>0$ is a constant. The associated
total occupation measure is defined by
$$\mathbf{Y}:=\int_0^\infty \mathbf{X}_t\,\dd t.$$
Since $\mathbf{X}$ becomes extinct a.s., the measure $\mathbf{Y}$ is finite. Sugitani \cite{Sug}
proved that the measure $\mathbf{Y}$ has a.s. a continuous density $(L^a)_{a\in\R}$,
which is even continuously differentiable on $(-\infty,0)\cup(0,\infty)$. We write 
$\dot L^a$ for the derivative of this function at $a\in\R\backslash\{0\}$. Moreover, the function $a\mapsto L^a$
has a right derivative $\dot L^{0+}$ and a left derivative $\dot L^{0-}$ at $0$, and, by convention, 
we set $\dot L^0=\dot L^{0+}$. 
In order to state our result, let $U=(U_t)_{t\geq 0}$ be a stable L\'evy process with  index $3/2$ and no negative jumps.
The distribution of $U$  is characterized by specifying its Laplace exponent $\psi(\lambda)=\sqrt{2/3}\,\lambda^{3/2}$
(see Section \ref{subsec:bridge}). For every $t>0$, let $(p_t(x))_{x\in\R}$ be the continuous
density of $U_t$, which is determined by its Fourier transform
$$\int_\R e^{\II u x}\,p_t(x)\,\dd x = \exp(-c_0t\, |u|^{3/2}\,(1+\II\,\mathrm{sgn}(u))),$$
where $c_0=1/\sqrt{3}$ and $\mathrm{sgn}(u)=\mathbf{1}_{\{u>0\}}-\mathbf{1}_{\{u<0\}}$. Then $x\mapsto p_t(x)=t^{-2/3}p_1(xt^{-2/3})$ is strictly positive, infinitely differentiable and 
has bounded derivatives for each $t$ (see Ch. 2 of \cite{Zol} for these and other properties of stable densities).  Write $p'_t(x)$ for the derivative of this function.

\begin{theorem}
\label{main-th}
For every $y\in\R$, set $g(0,y)=0$ and, for every $t>0$,
$$g(t,y)=8 t\,\frac{p'_t(y)}{p_t(y)}.$$
Then 
$$\int_0^\infty |g\Bigl(L^y,\frac{1}{2}\dot L^y\Bigr)|\,\dd y<\infty, \quad\hbox{a.s.}$$
and the pair $(L^x,\dot L^x)_{x\geq 0}$ satisfies the two-dimensional stochastic differential equation 
\begin{equation}\label{main-SDE}
\begin{aligned}
\dot L^x&=\dot L^0 + 4\int_0^x \sqrt{{L^y}}\,\dd B_y + \int_0^x g\Bigl(L^y,\frac{1}{2}\dot L^y\Bigr)\,\dd y\\
L^x&=L^0+\int_0^x\dot L^y\,\dd y,
\end{aligned}
\end{equation}
where $B$ is a linear Brownian motion. Moreover if $R=\inf\{x\ge 0:L^x=0\}$, then $(L^x,\dot L^x)$ is the pathwise unique solution to \eqref{main-SDE} which satisfies $(L^x,\dot L^x)=(L^{x\wedge R},\dot L^{x\wedge R})$ for all $x\ge 0$ a.s. 
\end{theorem}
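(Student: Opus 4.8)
The plan is to read off the semimartingale dynamics of $(L^x,\dot L^x)$ from the structure of the Brownian snake, and then to treat existence and uniqueness for \eqref{main-SDE} as a separate analytic problem. The engine is the special Markov property of the Brownian snake applied across the levels of the spatial motion. Fixing $a>0$, I would decompose the total occupation measure as $\mathbf{Y}=\mathbf{Y}_{<a}+\mathbf{Y}_{>a}$, where $\mathbf{Y}_{>a}$ is the sum of the occupation measures of the excursions of the tree-indexed Brownian motion above level $a$; conditionally on the truncation of the snake below level $a$, these excursions form a Poisson point measure whose intensity is the exit local time at $a$ times an excursion measure for a snake started from $a$. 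Since $\mathbf{Y}$ restricted to $(a,\infty)$ is exactly $\mathbf{Y}_{>a}$ and $\mathbf{Y}(\{a\})=0$, the density $(L^y,\dot L^y)_{y\ge a}$ is a measurable functional of these excursions, while $(L^y,\dot L^y)_{y\le a}$ is determined by $y\mapsto\mathbf{Y}((-\infty,y])$ (whose left derivative is $L^y$, by continuity). These two observations produce a filtration $(\mathcal G_a)$ to which $(L^a,\dot L^a)$ is adapted and with respect to which increments can be computed; combined with the Markov property of $(L^a,\dot L^a)$ from \cite{Mar}, this reduces the matter to identifying the generator $\mathcal A F(\ell,m)=m\,\partial_\ell F+g(\ell,\tfrac12 m)\,\partial_m F+8\ell\,\partial_m^2 F$.

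\textbf{The diffusion coefficient.} To obtain the martingale part of $\dot L^x$ and its quadratic variation I would show that $M^x:=\dot L^x-\dot L^0-\int_0^x g(L^y,\tfrac12\dot L^y)\,\dd y$ is a continuous $(\mathcal G_x)$-local martingale and that $\langle M\rangle_x=16\int_0^x L^y\,\dd y$. The conditionally centred part of the increment $\dot L^{a+\ve}-\dot L^a$ comes from the fresh Poissonian excursions above level $a$; computing its conditional second moment to first order in $\ve$ via the classical moment formulas for the Brownian snake identifies the infinitesimal variance of $\dot L^a$ as a constant multiple of the exit local time at $a$, which a further moment computation identifies with $16L^a$. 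A martingale representation theorem then gives the form $4\int_0^x\sqrt{L^y}\,\dd B_y$ with $B$ a standard Brownian motion, and the equation $L^x=L^0+\int_0^x\dot L^y\,\dd y$ is immediate since $L$ is $C^1$ on $(0,\infty)$.

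\textbf{The drift and the integrability statement.} Identifying the precise drift $g(\ell,\tfrac12 m)=8\ell\,p'_\ell(\tfrac12 m)/p_\ell(\tfrac12 m)$ is the crux. The stable-$3/2$ density enters because, once the returns of above-level excursions are accounted for, the relevant one-dimensional ``profile'' built from $\mathbf{Y}$ carries an effective branching mechanism of the form $c\,\lambda^{3/2}$, matching the Laplace exponent $\psi(\lambda)=\sqrt{2/3}\,\lambda^{3/2}$ of $U$; concretely I expect $g$ to arise as the logarithmic-derivative (Doob $h$-transform) drift that makes the pair $(L^a,\dot L^a)$ — which otherwise would evolve like ``(clock, stable-$3/2$ value)'' with $\dd L^a=\dot L^a\,\dd a$ and conditional variance $16L^a$ — actually carry the law of the local time, i.e. become extinct consistently at $R$. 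The exact form and the constant $8$ would be pinned down by an explicit Laplace-transform computation with the snake's exit measures. For the integrability claim, $g(0,\cdot)=0$ reduces it to the integral over $[0,R]$; writing $p_t(y)=t^{-2/3}p_1(yt^{-2/3})$ and using the super-exponential left tail of $p_1$, one sees $g(L^y,\tfrac12\dot L^y)$ can blow up near $R$ only like $(\dot L^y)^2/L^y$, so the claim follows once one shows $L^y$ vanishes faster than linearly as $y\uparrow R$ — equivalently $\dot L^R=0$, which is forced since $L\ge0$ is $C^1$ and vanishes at $R$, and which can be quantified through the behaviour of the snake near the top of its range.

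\textbf{Pathwise uniqueness.} Let $(L^{i},\dot L^{i})$, $i=1,2$, be two solutions of \eqref{main-SDE} driven by the same $B$ and frozen after the respective zero times $R^i$ of $L^i$. On the event that $L^i\ge\eta$ on $[0,T]$, the drift $g$ is bounded and locally Lipschitz there while $\ell\mapsto\sqrt{\ell}$ is $\tfrac12$-H\"older, so a Yamada--Watanabe estimate for $\dot L^1-\dot L^2$ together with a Gronwall argument for $L^1-L^2=\int_0^\cdot(\dot L^1-\dot L^2)$ forces $(L^1,\dot L^1)=(L^2,\dot L^2)$ up to the first exit of either process from $[\eta,\infty)$; letting $\eta\downarrow 0$ identifies $R^1=R^2=:R$ and the two solutions on $[0,R)$, and the freezing hypothesis extends equality to all $x\ge 0$. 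Existence of a solution is supplied by the local time process itself via the first three steps. The main obstacle is the exact determination of the drift $g$: turning the Poissonian excursion picture above level $a$ into the explicit logarithmic derivative of the stable-$3/2$ density with the correct constants is where the hardest computation lies, and controlling the approach to $R$ — needed both for the integrability statement and for uniqueness — is the second delicate point.
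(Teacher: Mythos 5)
Your high-level plan — decompose across spatial levels, identify the semimartingale characteristics of $(L^x,\dot L^x)$, then treat existence and uniqueness for \eqref{main-SDE} separately by stopping when $L$ is small — matches the paper's architecture. However, two of your key steps would fail as described.

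\textbf{The quadratic variation route cannot proceed via conditional second moments.} You propose "computing its conditional second moment to first order in $\ve$." But the paper establishes (Lemma \ref{moment-deri}) that the increments $\dot L^{a+\ve}-\dot L^a$ are in $L^q$ only for $q<4/3$; the second moment is infinite. This is a genuine obstruction, and it is exactly why the paper instead invokes Hong's stochastic-integral representation $\dot L^x=-\alpha-M_t(\mathrm{sgn}(x-\cdot))$ (for $t\ge\xi$) in terms of the martingale measure of $\mathbf X$, and computes the quadratic variation of $\dot L^x$ directly as a limit in probability of $\sum_i(\dot L^{x_i}-\dot L^{x_{i-1}})^2$ along subdivisions (Proposition \ref{quad-var}), never touching conditional variances. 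Your plan needs to be replaced by that argument (or something with comparable robustness). Relatedly, your claim that the infinitesimal variance "identifies with $16L^a$" via the exit local time needs care: what one actually uses is the superprocess identity \eqref{quad-var-M}, not a conditional moment of the excursion Poisson measure.

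\textbf{The drift is not identified.} This is the crux and you explicitly defer it ("turning the Poissonian excursion picture above level $a$ into the explicit logarithmic derivative of the stable-$3/2$ density with the correct constants is where the hardest computation lies"). The Doob $h$-transform heuristic you describe is suggestive but would not, on its own, produce the constant $8$ or the precise function $g(t,y)=8tp'_t(y)/p_t(y)$, nor would it give a rigorous argument. The paper's actual mechanism is: (i) Proposition \ref{key-ingre}, which conditions on $(L^a,\frac12\dot L^a)=(t,y)$ and shows that the boundary sizes of the above-level excursions are distributed exactly as the jumps of a $\psi$-L\'evy bridge from $0$ to $y$ in time $t$; (ii) the explicit formula $\N^{*,z}_0(\dot\ell^h)=z\gamma(3z/(2h^2))$ (Proposition \ref{expected-derivative}); (iii) a Fourier/Mecke computation (Proposition \ref{increment-derivative}) that sums $Z_j\gamma(3Z_j/(2h^2))$ over the bridge jumps and shows the result is $g_h(t,y)$, with $g_h(t,y)/h\to g(t,y)$. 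Your proposal never names the L\'evy bridge step (only "a one-dimensional profile with effective branching $c\lambda^{3/2}$"), which is the essential input linking the excursion decomposition to the stable density. Moreover, even assuming you had the formula, the interchange of sums and expectations in (iii) requires the uniform $L^q$ bounds of Lemma \ref{lem-tech}, which are again obtained via Hong's representation rather than via the snake excursion measures — another place where your sketch silently assumes integrability that does not hold in $L^2$.

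\textbf{Minor points.} Your integrability argument ("$g$ blows up like $(\dot L^y)^2/L^y$, and $\dot L^R=0$ forces $L^y=o(R-y)$") is essentially correct in outcome but fragile: you would need a quantitative decay rate of $L^y$ near $R$ to control $(\dot L^y)^2/L^y$, and $\dot L^R=0$ alone does not supply one. The paper avoids this by first bounding only the negative part of $g$ (Lemma \ref{tech-SDE}, using unimodality and boundedness of $p'_1/p_1$ on $[y_0,\infty)$), and then bootstrapping integrability of the positive part from convergence of the SDE approximation. On uniqueness, invoking Yamada--Watanabe is unnecessary: once you stop at the exit from $\{1/K\le L^x\le K,\ |\dot L^x|\le K\}$ the coefficient $\sqrt\ell$ is Lipschitz there, and one needs the bound on $|\dot L^x|$ as well (which you omit) to make $g$ Lipschitz. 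This is a cosmetic slip rather than a gap.

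In short, the skeleton is right, but the two load-bearing steps — identifying $g$ via the L\'evy bridge decomposition and Mecke's formula, and controlling moments and the quadratic variation via the martingale measure representation — are missing from your proposal and are not routine.
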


\rem The fact that the local time satisfies the last property stated in the Theorem follows from Theorem~1.7 in \cite{MP} where it is shown that
if $R$ is as above and $G=\sup\{x\le 0:L^x=0\}$, then
\begin{equation}\label{Lsupp}
-\infty<G<0<R<\infty\ \text{ and }\{x\in \R:L^x>0\}=(G,R)\ a.s.
\end{equation}
Strictly speaking, in order to write equation \eqref{main-SDE}, it may be necessary to enlarge the underlying probability space.
The point is that the Brownian motion $B$
will be determined from the pair $(L^x,\dot L^x)_{x\geq 0}$ only up to the ``time'' $R$ 
(for $x>R$ we have $L^x=\dot L^x=0$). So a more precise statement would be the existence 
of an enlarged probability space $(\Omega, \f,\P)$ equipped with a filtration $(\f_t)_{t\geq 0}$ and an $(\f_t)$-Brownian motion
$B$ such that $(L^t,\dot L^t)_{t\geq 0}$ is adapted to the filtration $(\f_t)_{t\geq 0}$ and \eqref{main-SDE} holds (see
the proof in Section \ref{sec:SDE}).

\smallskip

Interestingly, the functions $p_t$ and $p'_t$ have explicit expressions in terms of the classical Airy function $\mathrm{Ai}$
and its derivative $\mathrm{Ai}'$. In fact, $x\to p_t(-x)$ is called the Airy map distribution in \cite{FS}. For every $t>0$ and $x\in\R$, we have
$$p_t(x)= 6^{-1/3}\,t^{-2/3} \,\mathcal{A}(6^{-1/3}t^{-2/3}x),$$
where
$$\mathcal{A}(x)=-2\,e^{2x^3/3}\Big( x\mathrm{Ai}(x^2) + \mathrm{Ai}'(x^2)\Big).$$
See \cite[Section IX.11]{FS}, or \cite{CC} and the references therein, and note that our choice of $p_t$ differs from that in \cite{CC} by a scaling constant. It follows that 
$$g(t,x)=8\times 6^{-1/3}\,t^{1/3} \,\frac{\mathcal{A}'}{\mathcal{A}}( 6^{-1/3}\,t^{-2/3} x),$$
with (the Airy equation $\mathrm{Ai}''(x)=x\mathrm{Ai}(x)$ helps here)
\begin{equation}\label{Airatio}\frac{\mathcal{A}'}{\mathcal{A}}(x)= 4x^2 + \frac{\mathrm{Ai}(x^2)}{x\mathrm{Ai}(x^2) + \mathrm{Ai}'(x^2)}.
\end{equation}
One useful application of this representation and known asymptotics for $\mathrm{Ai}$ and $\mathrm{Ai}'$ (see p. 448 of \cite{AS}) is that
\begin{equation}\label{rtpt}\frac{p_1'}{p_1}(y)=6^{-1/3}\,\frac{\mathcal{A}'}{\mathcal{A}}(6^{-1/3}y)= -\frac{5}{2y}+O\Bigl(\frac{1}{y^4}\Bigr)\ \text{ as }y\to+\infty,
\end{equation}
and so 
\begin{equation}\label{gbnd1}
\text{for all }y_0\in\R,\ \sup_{y\ge y_0}\Bigl|\frac{p_1'}{p_1}(y)\Bigr|=C(y_0)<\infty.
\end{equation}

We can reformulate our theorem in terms of the model called Brownian motion indexed by the Brownian tree.
Here the Brownian tree $\t$ is a ``free'' version of Aldous' Continuum Random Tree \cite{Ald} and may be defined as the tree coded by a Brownian excursion under the ($\sigma$-finite) It\^o measure. Points of 
$\t$ are assigned ``Brownian labels'' $(V_u)_{u\in\t}$, in such a way that the label of the root is $0$ and labels evolve like linear Brownian
motion along the line segments of the tree. It is convenient to assume that both the tree $\t$ 
and the labels $(V_u)_{u\in\t}$ are defined on the canonical space of snake trajectories under the
``excursion measure'' $\N_0$  (see Section \ref{sec:preli} below for a more precise presentation). If $\mathrm{Vol}$
denotes the volume measure on the tree $\t$, we are interested in the total occupation measure, which is the
finite measure $\mathcal{Y}$ on $\R$ defined by 
\begin{equation}\label{Ydefn}\mathcal{Y}(f)=\int_\t f(V_u)\,\mathrm{Vol}(\dd u),
\end{equation}
for every nonnegative Borel function $f$ on $\R$. The measure $\mathcal{Y}$ has a continuously differentiable density
$(\ell^x)_{x\in\R}$ with respect to Lebesgue measure on $\R$, and we write $(\dot\ell^x)_{x\in\R}$ for its derivative.
We can then state an analog of Theorem \ref{main-th}. There is a technical difficulty due to the fact that 
$\N_0$ is an infinite measure, and for this reason we need to make an appropriate conditioning.

\begin{theorem}
\label{main2}
Let $\delta>0$, and consider the probability measure $\N^{(\delta)}_0:=\N_0(\cdot\mid \ell^0>\delta)$. Then, 
$$\int_0^\infty |g\Bigl(\ell^y,\frac{1}{2}\dot \ell^y\Bigr)|\,\dd y<\infty, \quad\N^{(\delta)}_0\hbox{\ a.s.}$$
and, under $\N^{(\delta)}_0$, the pair $(\ell^x,\dot \ell^x)_{x\geq 0}$ satisfies the two-dimensional stochastic differential equation 
\begin{align*}\dot \ell^x&=\dot \ell^0 + 4\int_0^x \sqrt{{\ell^y}}\,\dd \beta_y + \int_0^x g\Bigl(\ell^y,\frac{1}{2}\dot \ell^y\Bigr)\,\dd y\\
\ell^x&=\ell^0+\int_0^x\dot \ell^y\, \dd y,
\end{align*}
where $\beta$ is a linear Brownian motion. Moreover if $\rho=\inf\{x\ge 0:\ell^x=0\}$, then $(\ell^x,\dot \ell^x)$ is the pathwise unique solution to the above equation which satisfies $(\ell^x,\dot \ell^x)=(\ell^{x\wedge \rho},\dot \ell^{x\wedge \rho})$ for all $x\ge 0$ a.s. 

\end{theorem}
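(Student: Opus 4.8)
The plan is to deduce Theorem~\ref{main2} from Theorem~\ref{main-th} by exploiting the well-known correspondence between super-Brownian motion started from $\alpha\,\delta_0$ and a Poisson collection of snake trajectories under $\N_0$. Concretely, under the Poisson point measure $\sum_i \delta_{\omega_i}$ with intensity $\alpha\,\N_0$, the total occupation measure $\mathbf{Y}$ decomposes as $\sum_i \y_i$, where $\y_i$ is the occupation measure \eqref{Ydefn} associated with $\omega_i$; consequently the density satisfies $L^x=\sum_i \ell^x_i$ and, differentiating, $\dot L^x=\sum_i \dot\ell^x_i$ for $x\neq 0$ (with the appropriate one-sided convention at $0$). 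The first step is therefore to recall this decomposition and to justify the term-by-term differentiation, which follows from Sugitani's regularity together with the fact that only finitely many trajectories contribute a nonzero label near any fixed level — indeed by \eqref{Lsupp} applied to each $\omega_i$, and by the structure of the support of $\mathbf{Y}$, the sum is locally finite.

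Next I would recover $\N_0$-statements from $\P$-statements by a standard disintegration argument. Conditionally on the total number of ``excursions'' and on the event that exactly one of them has $\ell^0>\delta$, the corresponding trajectory is distributed according to $\N^{(\delta)}_0$, while the others contribute mass that is smooth and vanishes to high order near $0$. More efficiently, one uses the classical fact that, for the Brownian snake, the ``first'' excursion straddling level $0$ with large enough local time at $0$ has law $\N_0(\cdot\mid \ell^0>\delta)$ up to the bias; alternatively one invokes the special Markov property or a direct computation with the Laplace functional of $\mathbf{X}$ to show that $\N_0(\cdot\mid\ell^0>\delta)$ arises as a weak limit of $\mathbf{X}$-laws (e.g.\ conditioning $\alpha\to 0$ and on $L^0>\delta$). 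In either case, the integrability statement $\int_0^\infty |g(\ell^y,\tfrac12\dot\ell^y)|\,\dd y<\infty$ transfers immediately, since it holds $\P$-a.s.\ for $L$ and each summand is dominated accordingly; and the SDE \eqref{main-SDE} for $(L^x,\dot L^x)$, being an identity between continuous adapted processes, restricts to the sub-$\sigma$-field generated by a single trajectory, yielding the SDE for $(\ell^x,\dot\ell^x)$ with a new Brownian motion $\beta$ (the martingale part $4\int_0^x\sqrt{\ell^y}\,\dd\beta_y$ has the right bracket $16\int_0^x \ell^y\,\dd y$ by the additivity of local times and the orthogonality of the contributions of distinct excursions, so Lévy's characterization identifies $\beta$ as a Brownian motion, after enlarging the space as in the Remark).

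For the uniqueness assertion, I would simply transcribe the pathwise uniqueness proof from Section~\ref{sec:SDE}: the argument there applies verbatim to \eqref{main-SDE} stopped at $R=\inf\{x:L^x=0\}$, and nothing in it uses that the driving randomness comes from super-Brownian motion rather than from a single snake trajectory. The only point needing care is that $\rho=\inf\{x\ge 0:\ell^x=0\}$ plays for a single excursion the exact role that $R$ plays for $\mathbf{X}$, which is guaranteed by \eqref{Lsupp}; thus the claim ``$(\ell^x,\dot\ell^x)=(\ell^{x\wedge\rho},\dot\ell^{x\wedge\rho})$'' is automatic, and pathwise uniqueness within this class follows from the corresponding statement in Theorem~\ref{main-th}.

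The main obstacle I anticipate is the justification that passing from the Poissonian superposition to a single excursion genuinely produces a \emph{Brownian motion} $\beta$ adapted to the filtration generated by $(\ell^x,\dot\ell^x)$: one must check that the quadratic variation computation survives the conditioning on $\{\ell^0>\delta\}$ and that no extra drift is introduced, i.e.\ that the decomposition of the $\P$-semimartingale $(L^x,\dot L^x)$ is compatible with the filtration of the selected trajectory. This is where the excursion-theoretic input — the independence of the contributions of distinct trajectories and the exact form of the bias induced by conditioning on $\ell^0>\delta$ (which only reweights by a function of $\ell^0$, hence does not affect the dynamics in $x$) — does the real work, and it should be handled by the same special Markov property arguments used elsewhere in the paper.
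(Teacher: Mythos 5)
Your strategy — passing from the super-Brownian local time $(L^x,\dot L^x)$ to the single-trajectory local time $(\ell^x,\dot\ell^x)$ via the Poissonian decomposition $L^x=\sum_i\ell^x_i$, and then ``restricting'' the SDE \eqref{main-SDE} to a single trajectory — is different from what the paper does, and it has a fatal flaw. The SDE is not an identity that can be projected onto a sub-$\sigma$-field: the drift term $\int_0^x g(L^y,\tfrac12\dot L^y)\,\dd y$ involves the \emph{nonlinear} function $g(t,y)=8t\,p_t'(y)/p_t(y)$, which does not decompose as $g(\sum_i\ell^y_i,\tfrac12\sum_i\dot\ell^y_i)=\sum_i g(\ell^y_i,\tfrac12\dot\ell^y_i)$, and similarly the diffusion coefficient $\sqrt{L^y}$ is not additive. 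So even granting the additivity of local times and the conditional independence of distinct trajectories, the equation satisfied by the sum tells you essentially nothing about the equation satisfied by one summand. The ``main obstacle'' you flag at the end is in fact the entire content of the theorem, and the route you propose (projecting the semimartingale decomposition onto the filtration of one trajectory, or taking a weak limit as $\alpha\to 0$ conditioned on $L^0>\delta$) is not carried out and would require serious additional work to make rigorous. The claim that the integrability of $\int|g(L^y,\tfrac12\dot L^y)|\,\dd y$ ``transfers immediately'' to a single trajectory is also unjustified: $g$ is not monotone, and $g(t,y)$ can grow as $t\downarrow 0$ when $y<0$ (see \eqref{ltpt}), so a single small trajectory is not dominated by the total.

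The paper's actual argument is short and goes in the opposite logical direction: it \emph{cites} the result of \cite{Mar} that $(\ell^x,\dot\ell^x)_{x\ge 0}$ under $\N^{(\delta)}_0$ is a Markov process with the \emph{same transition kernel} as $(L^x,\dot L^x)_{x\ge 0}$ under $\P$. This identity of transition kernels is not derived by decomposition; it is established in \cite{Mar} by excursion-theoretic arguments analogous to Proposition~\ref{key-ingre}, which describe the conditional law of the excursions above level $a$ given the past in terms of the same L\'evy bridge construction in both models. Once one knows the two Markov processes have the same transition kernel, and that \eqref{main-SDE} has pathwise uniqueness (hence, by Yamada--Watanabe, uniqueness in law) in the class of solutions stopped at the first zero of the first coordinate, it follows that $(\ell^x,\dot\ell^x)$ is also a weak solution, and after enlarging the probability space to supply a driving Brownian motion (exactly as in the Remark after Theorem~\ref{main-th}) one obtains the SDE under $\N^{(\delta)}_0$. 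You should replace the decomposition argument with this transfer-via-transition-kernel argument, which is what Theorem~\ref{main2} actually rests on.
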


In the language of superprocesses, Theorem \ref{main2} corresponds to a version of Theorem \ref{main-th} under the
so-called canonical measure. In what follows, we will only deal with Theorem \ref{main-th}. Theorem \ref{main2}
then follows since it is shown in \cite{Mar} that the process $(\ell^t,\dot \ell^t)_{t\geq 0}$ is Markov 
with the same transition kernels as the process $(L^t,\dot L^t)_{t\geq 0}$ considered in Theorem \ref{main-th} (the pathwise uniqueness in either equation will follow easily from a classical result for locally Lipschitz coefficients).
Still the formulation of Theorem \ref{main2} is useful to understand our approach, as we will rely on the Brownian
snake representation of super-Brownian motion, which involves considering a Poisson collection of Brownian trees
equipped with Brownian labels.  The same remark as for Theorem \ref{main-th} applies also to Theorem \ref{main2} (see Theorem~1.4 of \cite{Hon} for the analogue of \eqref{Lsupp}). 

One motivation for deriving a stochastic differential equation for $(L^x,\dot L^x)$ is to allow one access to the tools of stochastic analysis for 
a more detailed analysis of these processes.  To this end, we use a transformation of the state space  and a random time change to effectively transform the solution to \eqref{main-SDE} into an explicit one-dimensional diffusion which can be studied in detail, and from which one can 
reconstruct $(L^x,\dot L^x)$ (see Propositions \ref{simple-SDE} and \ref{thm:1ddiffusion}). The diffusion will be a time change of $\dot L^x/(L^x)^{2/3}$ and is the unique solution of \eqref{SDE4} below. 

Our proofs depend on both the excursion theory for the Brownian snake \cite{ALG} and tools coming from the
theory of superprocesses \cite{Per, Hon}. Excursion theory for the Brownian snake was the key ingredient 
for getting the Markov property of the process $(L^x,\dot L^x)_{x\geq 0}$ in \cite{Mar}. The transition 
kernel of this process was described in terms of the ``positive excursion measures'' $\N^{*,z}_0$, which roughly speaking
give the distribution of the labeled tree $(\t,(V_u)_{u\in\t})$ conditioned to have only nonnegative labels, with a parameter 
$z>0$ that in some sense prescribes how many points $u$ of $\t$ have the label zero. Local times 
still make sense under the measures $\N^{*,z}_0$ and, for every $h>0$, one can compute 
the expected value of the derivative of local time at level $h$  in the form 
$$\N^{*,z}_0(\dot \ell^h)= z\,\gamma\Big(\frac{3h}{2z^2}\Big),$$
where the function $\gamma$ has an explicit expression in terms of the (complementary) error function $\mathrm{erfc}$
(Proposition \ref{expected-derivative}). For every $a\geq 0$ and $t>0$, $y\in\R$, excursion
theory then leads to the formula
$$\E\Big[\dot L^{a+h}\,\Big|\, L^a=t,\frac{1}{2}\dot L^a=y\Big]=\E\Big[\sum_{j=1}^\infty Z_j\,\gamma\Big(\frac{3Z_j}{2h^2}\Big)\Big],
$$
where $(Z_j)_{j\geq 1}$ are the jumps of the bridge from $0$ to $y$ in time $t$
associated with the L\'evy process $U$ (Proposition \ref{tech-incre}) and listed in decreasing order. The precise justification of the 
formulas of the last two displays requires certain bounds on moments of the derivatives of local time
(Lemmas \ref{moment-deri} and \ref{lem-tech}).
We obtain these bounds via a stochastic integral representation of the derivative $\dot L^x$ 
in terms of the martingale measure associated with $\mathbf{X}$, which is due to Hong \cite{Hon}. 
Here the use of these techniques from the theory of superprocesses is crucial since the excursion
measures $\N^{*,z}_0$ do not seem to provide a tractable setting for a direct derivation of the required bounds. 

It turns out that one can explicitly compute 
the right-hand side of the last display in terms of an integral involving the density $p_t$ (Proposition \ref{increment-derivative})
and it is then an easy matter to obtain
$$\lim_{h\to 0} \frac{1}{h}\, \E\Big[\dot L^{a+h}-\dot L^a\,\Big|\, L^a=t,\frac{1}{2}\dot L^a=y\Big]
= 8\,t\, \frac{p'_t(y)}{p_t(y)}=g(t,y).$$
From this, one can infer that, for every $\ve>0$, the process
$$M^\ve_x:=\dot L^{x\wedge S_\ve}-\dot L^0-\int_0^{x\wedge S_\ve} g(L^y,\frac{1}{2}\dot L^y)\,\dd y$$
is a local martingale, where we have written $S_\ve:=\inf\{x\geq 0:L^x\leq \ve\}$. At that point, we again use
the stochastic integral representation of Hong \cite{Hon}, from
which we can deduce that the quadratic variation of $M^\ve_x$ is $16\int_0^{x\wedge S_\ve} L^y\,\dd y$. 
Although there are some additional technicalites to handle, often due to the unboundedness of $g(L^y,\dot L^y/2)$, we then can use 
standard tools of stochastic calculus to derive the
stochastic differential equation \eqref{main-SDE}. 

We note that the recent paper of Chapuy and Marckert \cite{CM} addresses similar questions 
for the model called ISE (integrated super-Brownian excursion). This model, which was
introduced by Aldous \cite{Al2}, corresponds to conditioning
the Brownian tree $\t$ to have total volume equal to $1$. Under this conditioning, local times are still
well defined and continuously differentiable. On the basis of
discrete approximations, \cite{CM} conjectures a stochastic differential equation
for local times of ISE, which is similar to \eqref{main-SDE} but  with
a more complicated drift term involving also the integrals $\int_{-\infty}^x L^y\,\dd y$ --- the reason 
why these integrals appear is of course the special conditioning which forces $\int_{-\infty}^\infty L^y\,\dd y=1$.
It is likely that Theorem \ref{main2} can be used to also derive a stochastic differential equation 
for local times of ISE, but we do not pursue this matter here. 

The paper is organized as follows. Section \ref{sec:preli} gathers a number of preliminaries. In particular,
we introduce the positive excursion measures $\N^{*,z}_0$, and we recall the main result of the excursion theory of \cite{ALG}.
In Section \ref{sec:super}, we briefly recall the Brownian snake construction of the super-Brownian motion $\mathbf{X}$, and
we state a key result from \cite{Mar} giving the conditional distribution of the collection of ``excursions'' of $\mathbf{X}$ above
a level $a\geq 0$ knowing $(L^x,\dot L^x)_{x\leq a}$ (Proposition \ref{key-ingre}). This conditional distribution knowing 
$L^a=t$ and $\dot L^a=y$ is given in terms of the measures $\N^{*,z}_0$
and the collection of jumps of the L\'evy bridge from $0$ to $y$ in time $t$. In Section \ref{est-moment}, we rely on
Hong's representation to derive our estimates on moments of the increments of $\dot L^x$, and then 
to evaluate the quadratic variation of this process. Section \ref{sec:exp-incre-deriv} is devoted to the calculation
of the conditonal expected value of $\dot L^{a+h}-\dot L^a$ knowing $L^a=t$ and $\dot L^a=y$. Finally, Section \ref{sec:SDE}
gives the proof of Theorem \ref{main-th} and also establishes the connection between $(L^x,\dot L^x)$ and the simple diffusion in \eqref{SDE4}.

\section{Preliminaries}
\label{sec:preli}

\subsection{Snake trajectories}
\label{sna-tra}

The proof of our main result relies in part on the Brownian snake representation 
of super-Brownian motion. We start by recalling the formalism of
snake trajectories, referring to \cite{ALG} for more details.
A (one-dimensional) finite path $\w$ is a continuous mapping $\w:[0,\zeta]\la\R$, where $\zeta=\zeta_{(\w)}\in[0,\infty)$ is called the lifetime of $\w$. The space $\W$ of all finite paths is a Polish space when equipped with the
distance
$$d_\W(\w,\w')=|\zeta_{(\w)}-\zeta_{(\w')}|+\sup_{t\geq 0}|\w(t\wedge
\zeta_{(\w)})-\w'(t\wedge\zeta_{(\w')})|.$$
The endpoint or tip of the path $\w$ is denoted by $\wh \w=\w(\zeta_{(\w)})$.
For every $x\in\R$, we set $\W_x=\{\w\in\W:\w(0)=x\}$. The trivial element of $\W_x$ 
with zero lifetime is identified with the point $x$ of $\R$. 

\begin{definition}
\label{def:snakepaths}
Let $x\in \R$. A snake trajectory with initial point $x$ is a continuous mapping $s\mapsto \omega_s$
from $\R_+$ into $\W_x$ 
which satisfies the following two properties:
\begin{enumerate}
\item[\rm(i)] We have $\omega_0=x$ and the number $\sigma(\omega):=\sup\{s\geq 0: \omega_s\not =x\}$,
called the duration of the snake trajectory $\omega$,
is finite (by convention $\sigma(\omega)=0$ if $\omega_s=x$ for every $s\geq 0$). 
\item[\rm(ii)] (Snake property) For every $0\leq s\leq s'$, we have
$\omega_s(t)=\omega_{s'}(t)$ for every $t\in[0,\displaystyle{\min_{s\leq r\leq s'}} \zeta_{(\omega_r)}]$.
\end{enumerate} 
\end{definition}

We will write $\S_x$ for the set of all snake trajectories
with initial point $x$, and $\S$ for the union of the sets $\S_x$ for all $x\in\R$. If $\omega\in \S$, we often write $W_s(\omega)=\omega_s$ and $\zeta_s(\omega)=\zeta_{(\omega_s)}$
for every $s\geq 0$, and we omit $\omega$ in the notation. 
The sets $\S$ and $\S_x$ are equipped with the distance
$$d_{\S}(\omega,\omega')= |\sigma(\omega)-\sigma(\omega')|+ \sup_{s\geq 0} \,d_\W(W_s(\omega),W_{s}(\omega')).$$
For $\omega\in\S_x$ and $a\in\R$, we will use the obvious notation $\omega + a\in\S_{x+a}$ for the translated snake trajectory. 
It is easy to verify \cite[Proposition 8]{ALG} that a snake trajectory $\omega$ is determined by the two functions $s\mapsto \zeta_s(\omega)$ and $s\mapsto \wh W_s(\omega)$ (the latter is 
sometimes called the tip function).

Let $\omega\in \S$ be a snake trajectory and $\sigma=\sigma(\omega)$. We define a pseudo-distance on $[0,\sigma]^2$
by setting
$$d_\zeta(s,s')= \zeta_s+\zeta_{s'}-2 \min_{s\wedge s'\leq r\leq s\vee s'} \zeta_r.$$
We then consider the associated equivalence relation $s\sim s'$ if and only if $d_\zeta(s,s')=0$ (or equivalently 
$\zeta_s=\zeta_{s'}= \min_{s\wedge s'\leq r\leq s\vee s'} \zeta_r$), and the quotient space $\t(\omega):=[0,\sigma]/\!\sim$ ,
which is equipped with the distance induced by $d_\zeta$. The metric space $(\t(\omega),d_\zeta)$ is 
a compact $\R$-tree called the {\it genealogical tree} of the snake trajectory $\omega$ (we refer to
\cite{probasur} for more information about the
coding of $\R$-trees by continuous functions). 
Let $p_{(\omega)}:[0,\sigma]\la \t(\omega)$ stand
for the canonical projection. By convention, the tree $\t=\t(\omega)$ is rooted at the point
$\rho_{(\omega)}:=p_{(\omega)}(0)=p_{(\omega)}(\sigma)$, and the volume measure $\mathrm{Vol}(\cdot)$ on $\t$ is defined as the pushforward of
Lebesgue measure on $[0,\sigma]$ under $p_{(\omega)}$.  As usual, for $u,v\in\t$, we say that $u$ is an ancestor
of $v$, or $v$ is a descendant of $u$, if $u$ belongs to the line segment from $\rho_{(\omega)}$ to $v$ in $\t$. 

The snake property shows that the condition $p_{(\omega)}(s)=p_{(\omega)}(s')$ implies that 
$W_s(\omega)=W_{s'}(\omega)$. So the mapping $s\mapsto W_s(\omega)$ can be viewed as defined on the quotient space $\t$.
For $u\in \t$, we set $V_u:=\wh W_s(\omega)$, for any $s\in[0,\sigma]$ such that $u=p_{(\omega)}(s)$. We interpret $V_u$ as a ``label'' assigned to the ``vertex'' $u$ of $\t$,
and each path $W_s$ records the labels along the line segment from $\rho_{(\omega)}$ to $p_{(\omega)}(s)$ in $\t$.

We will use the notation 
\begin{align*}
W^*&:=\max\{W_s(t): s\geq 0,t\in[0,\zeta_s]\}=\max\{\wh W_s:0\leq s\leq \sigma\}= \max\{V_u:u\in\t\},\\
W_*&:=\min\{W_s(t): s\geq 0,t\in[0,\zeta_s]\}= \min\{\wh W_s:0\leq s\leq \sigma\}=\min\{V_u:u\in\t\},
\end{align*}
and we also let $\mathcal{Y}=\mathcal{Y}_{(\omega)}$ be the  occupation measure of $\omega$, which is the finite measure on $\R$ defined by setting
\begin{equation}
\label{occu-measure}
\mathcal{Y}(f)= \int_0^{\sigma} f(\wh W_s)\,\dd s=\int_{\t} f(V_u)\,\mathrm{Vol}(\dd u),
\end{equation}
for any Borel function $f:\R\la \R_+$. Trivially, $\mathcal{Y}$ is supported on $[W_*,W^*]$. 

%

We next introduce the truncation of snake trajectories. For any $\w\in\W_x$ and $y\in\R$, we set
$$\tau_y(\w):=\inf\{t\in(0,\zeta_{(\w)}]: \w(t)=y\}\,,$$
with the usual convention $\inf\varnothing =\infty$.
Then if $\omega\in \S_x$ and $y\in \R$, we set, for every $s\geq 0$,
$$\nu_s(\omega):=\inf\Big\{t\geq 0:\int_0^t \mathrm{d}u\,\mathbf{1}_{\{\zeta_{(\omega_u)}\leq\tau_y(\omega_u)\}}>s\Big\}$$
(note that the condition $\zeta_{(\omega_u)}\leq\tau_y(\omega_u)$ holds if and only if $\tau_y(\omega_u)=\infty$ or $\tau_y(\omega_u)=\zeta_{(\omega_u)}$).
Then, setting $\omega'_s=\omega_{\nu_s(\omega)}$ for every $s\geq 0$ defines an element $\omega'$ of $\S_x$,
which will be denoted by  ${\rm tr}_y(\omega)$ and called the truncation of $\omega$ at $y$
(see \cite[Proposition 10]{ALG}). The effect of the time 
change $\nu_s(\omega)$ is to ``eliminate'' those paths $\omega_s$ that hit $y$ and then survive for a positive
amount of time. The genealogical tree of 
${\rm tr}_y(\omega)$ is canonically and isometrically identified 
with the closed subset of $\t(\omega)$ consisting of all $u$ such that
$V_v(\omega)\not =y$ for every strict ancestor $v$ of $u$
(different from $\rho_{(\omega)}$ when $y=x$). 

Finally, for $\omega\in\S_x$ and $y\in\R\backslash\{x\}$, we  define the excursions of $\omega$ away from $y$. In contrast with the truncation
${\rm tr}_y(\omega)$, these excursions now account for the
paths $\omega_s$ that hit $y$ and survive for a positive
amount of time. More precisely, let $(\alpha_j,\beta_j)$, $j\in J$, be the connected components of the open set
$\{s\in[0,\sigma]:\tau_y(\omega_s)<\zeta_{(\omega_s)}\}$
(note that the indexing set $J$ may be empty).
We notice that $\omega_{\alpha_j}=\omega_{\beta_j}$ for every $j\in J$, by the snake property, and $\wh\omega_{\alpha_j}=y$.
For every $j\in J$, we define a snake trajectory $\omega^j\in\S_y$ by setting
$$\omega^j_{s}(t):=\omega_{(\alpha_j+s)\wedge\beta_j}(\zeta_{(\omega_{\alpha_j})}+t)\;,\hbox{ for }0\leq t\leq \zeta_{(\omega^j_s)}
:=\zeta_{(\omega_{(\alpha_j+s)\wedge\beta_j})}-\zeta_{(\omega_{\alpha_j})}\hbox{ and } s\geq 0.$$
We say that $\omega^j$, $j\in J$, are the excursions of $\omega$ away from $y$. 

\subsection{The Brownian snake excursion measure}
\label{sna-mea}

Let $x\in\R$. The Brownian snake excursion 
measure $\N_x$ is the $\sigma$-finite measure on $\S_x$ that is characterized by the following two properties: Under $\N_x$,
\begin{enumerate}
\item[(i)] the distribution of the lifetime function $(\zeta_s)_{s\geq 0}$ is the It\^o 
measure of positive excursions of linear Brownian motion, normalized so that, for every $\ve>0$,
$$\N_x\Big(\sup_{s\geq 0} \zeta_s >\ve\Big)=\frac{1}{2\ve};$$
\item[(ii)] conditionally on $(\zeta_s)_{s\geq 0}$, the tip function $(\wh W_s)_{s\geq 0}$ is
a Gaussian process with mean $x$ and covariance function 
$$K(s,s')= \min_{s\wedge s'\leq r\leq s\vee s'} \zeta_r.$$
\end{enumerate}
Conditionally on the lifetime process $(\zeta_s)_{s\geq 0}$, each path $W_r$ is a linear Brownian path started from $x$ with
lifetime $\zeta_r$. When $r$ varies, the evolution of the path $W_r$ is described informally as follows. When $\zeta_r$ decreases, the path $W_r$
is ``erased'' from its tip, and when $\zeta_r$ increases, the path $W_r$ is ``extended'' by adding little pieces of Brownian motion at its tip.
The measure $\N_x$ can be interpreted as the excursion measure away from $x$ for the 
Markov process in $\W_x$ called the (one-dimensional) Brownian snake. We refer to 
\cite{Zurich} for a detailed study of the Brownian snake with a more general
underlying spatial motion.

For every $r>0$, we have
$$\N_x(W^*>x+r)=\N_x(W_*<x-r)={\displaystyle \frac{3}{2r^2}}$$ (see e.g. \cite[Section VI.1]{Zurich}). 
In particular, $\N_x(y\in[W_*,W^*])<\infty$ if $y\not =x$.

The following scaling property is often useful. For $\lambda>0$, for every 
$\omega\in \S_x$, we define $\theta_\lambda(\omega)\in \S_{x\sqrt{\lambda}}$
by $\theta_\lambda(\omega)=\omega'$, with
\begin{equation}\label{scalereln}\omega'_s(t):= \sqrt{\lambda}\,\omega_{s/\lambda^2}(t/\lambda)\,,\ 
\hbox{for } s\geq 0\hbox{ and }0\leq t\leq \zeta'_s:=\lambda\zeta_{s/\lambda^2}.
\end{equation}
Then $\theta_\lambda(\N_x)= \lambda\, \N_{x\sqrt{\lambda}}$. 

Let us introduce the local times, $(\ell^y)_{y\in\R}$, under $\N_x$. The next proposition follows from \cite{CM} (a slightly weaker statement had been obtained in \cite{BMJ}), and 
 is also closely related to the results of \cite{Sug} concerning super-Brownian motion.

\begin{proposition}
\label{exist-LT}
Let $x\in\R$. Then, $\N_x(\dd \omega)$ a.e. the occupation measure $\mathcal{Y}_{(\omega)}$ has a continuously differentiable density
with respect to Lebesgue measure. This density is denoted by $(\ell^y(\omega))_{y\in\R}$ and its derivative is
denoted by $(\dot\ell^y(\omega))_{y\in\R}$
\end{proposition}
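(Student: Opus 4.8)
The plan is to prove Proposition~\ref{exist-LT} by establishing the existence of a jointly continuous (in fact $C^1$ in the space variable) version of the local times via a Kolmogorov-type continuity argument, working directly under the $\sigma$-finite measure $\N_x$. First I would restrict attention to the event $\{W_*<y<W^*\}$ for $y$ in a fixed bounded interval $[-A,A]$, which has finite $\N_x$-mass by the formula $\N_x(W^*>x+r)=\N_x(W_*<x-r)=3/(2r^2)$; on the complement the occupation density is simply zero, so it suffices to produce a good density on this set. A natural candidate for $\ell^y$ is obtained by a smoothing/approximation procedure: set $\ell^y_\ve := \ve^{-1}\,\mathcal{Y}([y,y+\ve])$ or, better, convolve the occupation measure with a smooth mollifier $\varphi_\ve$, and show that $(\ell^y_\ve)_{y}$ converges as $\ve\to 0$, uniformly in $y$ together with its first derivative, $\N_x$-a.e.

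The key technical input is moment bounds for the increments under $\N_x$. Concretely I would aim to prove estimates of the form
\begin{equation*}
\N_x\Bigl(\mathbf{1}_{\{W_*<-A\}\cup\{W^*>A\}}\,\bigl|\ell^y_\ve-\ell^{y'}_\ve\bigr|^q\Bigr)\le C_{A,q}\,|y-y'|^{1+\eta}
\end{equation*}
for some $q$ and $\eta>0$, uniformly in $\ve$, and similarly for the discrete derivatives $\dot\ell^y_\ve:=\partial_y \ell^y_\ve$; a Kolmogorov--Centsov argument (valid for finite measures, hence applicable after the above truncation) then yields a Hölder-continuous limit. To get such bounds one computes $\N_x$-moments of products of occupation-measure increments. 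The standard tool here is the "special Markov property'' / excursion decomposition of the Brownian snake together with the first-moment formula $\N_x(\mathcal{Y}(f))=\int f$; higher moments of $\mathcal{Y}$ are expressed by recursive formulas (branching at each "special point''), and one reads off the needed spatial Hölder regularity from the Brownian heat kernel bounds that appear in these formulas. This is essentially the route of \cite{Sug} translated to the Brownian snake setting, or one may simply invoke \cite{BMJ, CM} as the excerpt suggests; for the differentiability one needs the analogous $q$-th moment control on $\ell^{y}_\ve-\ell^{y'}_\ve-(y-y')\,\dot\ell^{y'}_\ve$, which is where the extra smoothness of the one-dimensional heat kernel is used.

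The main obstacle is precisely the passage from continuity of the density to \emph{continuous differentiability}: one must show that the approximate derivatives $\dot\ell^y_\ve$ form a Cauchy family in the uniform norm, which requires second-order moment estimates on the occupation measure that are genuinely more delicate than those needed for mere continuity. The cleanest way around this is to differentiate the relation once at the level of the smoothed fields, obtain a Kolmogorov bound for $\dot\ell^y_\ve-\dot\ell^{y'}_\ve$ directly, and then argue that the uniform limit $\dot\ell^y$ of the $\dot\ell^y_\ve$ must coincide with the derivative of the uniform limit $\ell^y$ of the $\ell^y_\ve$ (uniform convergence of functions plus uniform convergence of derivatives forces the limit to be $C^1$ with the expected derivative). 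Once this is in place, an $\N_x$-a.e. statement follows by letting $A\to\infty$ along integers and using that $\{W_*<W^*\}$ holds $\N_x$-a.e. Since the excerpt explicitly allows citing \cite{CM} (and \cite{BMJ}, \cite{Sug}) for this statement, an acceptable short proof would reduce to recalling that $\mathcal{Y}$ is supported on $[W_*,W^*]$, invoking those references on each truncation event, and patching; but the Kolmogorov route sketched above is the self-contained argument I would write out if no citation were available.
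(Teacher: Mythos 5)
Your fallback option --- citing \cite{CM} (with \cite{BMJ} for the weaker continuity statement and \cite{Sug} for the super-Brownian analogue) --- is exactly what the paper does: Proposition \ref{exist-LT} is stated without proof and attributed to those references, so on that route there is nothing to compare. The self-contained Kolmogorov-type sketch is, in spirit, the strategy those references implement, but as written it has a concrete gap in the localization step, and it leaves the essential estimates unproved.

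The localization by range does not work. On the complement of $\{W_*<-A\}\cup\{W^*>A\}$ the trajectory is confined to $[-A,A]$ and its occupation measure still charges $[-A,A]$, so the claim that ``on the complement the occupation density is simply zero'' is false for any single such event; and the events $\{W_*<y<W^*\}$, which do have finite mass for fixed $y\neq x$, have $\N_x$-mass of order $3/(2(y-x)^2)$, blowing up as $y\to x$, so no finite-mass range event covers a neighbourhood of the starting point --- yet the proposition asserts continuous differentiability at $y=x$ as well (unlike the superprocess case, where the derivative jumps at the atom of the initial measure, cf.\ \eqref{L0jump}, under $\N_x$ there is no initial mass and the density is $C^1$ everywhere). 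The correct way to beat the infiniteness of $\N_x$ is to localize by duration rather than by range: $\N_x(\sigma>\delta)<\infty$ for every $\delta>0$, a.e.\ trajectory lies in some $\{\sigma>1/n\}$, and one can either run the Kolmogorov argument under each probability measure $\N_x(\cdot\mid \sigma>1/n)$ and patch over $n$, or use the scaling operators \eqref{scalereln} to reduce to the normalized excursion, which is precisely the ISE setting of \cite{CM}. Finally, the uniform-in-$\ve$ moment bounds of order $|y-y'|^{1+\eta}$ for increments of the mollified \emph{derivative} are not a routine consequence of the recursive moment formulas and heat-kernel bounds you invoke; they are the real content of \cite{CM} (and of \cite{Sug} for super-Brownian motion), and you acknowledge but do not carry out this step, so the sketch is not yet a proof without either those estimates or the citation.
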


We now introduce exit measures. We argue under $\N_x$, and fix $y\in\R\backslash\{x\}$. 
One shows that the limit
\begin{equation}
\label{formu-exit}
\z_y:=\lim_{\ve \to 0} \frac{1}{\ve} \int_0^\sigma \dd s\,\mathbf{1}_{\{\tau_y(W_s)\leq \zeta_s\leq\tau_y(W_s)+\ve\}}
\end{equation}
exists $\N_x$ a.e., and $\z_y$ is called the exit measure from $(y,\infty)$ (if $x>y$) or from $(-\infty,y)$ (if $y>x$). 
Roughly speaking, $\z_y$ counts how many paths $W_s$ hit $y$ and are stopped at that moment. 
The definition of $\z_y$
is a particular case of the theory of exit measures, see \cite[Chapter V]{Zurich}. We have 
$\z_y>0$ if and only if $y\in[W_*,W^*]$, $\N_x$ a.e. (recall $y$ is fixed).

Let us recall the special Markov property of the Brownian snake under $\N_0$ (see, for example, the appendix
of \cite{subor}).  

\begin{proposition}[Special Markov property]
\label{SMP}
Let $x\in \R$ and $y\in\R\backslash\{x\}$. Under the measure $\N_x(\dd \omega)$, 
let $\omega^j$, $j\in J$, be the excursions of $\omega$ away from $y$ and consider the point measure
$$\n_y=\sum_{j\in J} \delta_{\omega^j}.$$
Then, under the probability measure $\N_x(\dd\omega\,|\, y\in[W_*,W^*])$ and conditionally on $\z_y$, the point measure
$\n_y$ is Poisson with intensity $\z_y\,\N_y(\cdot)$ and is independent of $\mathrm{tr}_y(\omega)$.
\end{proposition}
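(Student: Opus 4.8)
The plan is to reduce the statement to a single Laplace-functional identity and then to prove that identity by ``cutting'' the Brownian snake at the places where the spatial motion first reaches $y$. Since $\N_x$ is the image of $\N_0$ under the translation $\omega\mapsto\omega+x$, and $\N_{-x}$ is the image of $\N_x$ under the reflection $\omega\mapsto-\omega$, we may assume without loss of generality that $x=0$ and $y>0$; then the paths $W_s$ with $\tau_y(W_s)<\zeta_s$ are precisely those that reach $y$ strictly before their tip, and the excursions $\omega^j$ record their evolution after the first hitting time of $y$, so that indeed $\omega^j\in\S_y$. I would first reinterpret the two objects appearing in the conditioning. By its very definition through the time change $\nu_s$, the truncation $\mathrm{tr}_y(\omega)$ is the snake trajectory obtained by stopping the underlying linear Brownian motion at the first time it hits $y$; on the genealogical tree this amounts to keeping the closed subtree of vertices having no strict ancestor with label $y$. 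The quantity $\z_y$ of \eqref{formu-exit} is the associated exit local time, and, using the approximation \eqref{formu-exit} together with the continuity of local times (Proposition \ref{exist-LT}) and the standard theory of exit measures for the Brownian snake (\cite[Ch.~V]{Zurich}), one checks that $\z_y$ is a measurable functional of $\mathrm{tr}_y(\omega)$ and is $\N_0$-a.e.\ strictly positive on $\{y\in[W_*,W^*]\}$. Write $\f_y$ for the $\sigma$-field generated by $\mathrm{tr}_y(\omega)$.

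It then suffices to prove that for every bounded measurable $G\ge 0$ on $\S$ and every bounded measurable $\Phi\ge 0$ on $\S_y$ with $\N_y(1-e^{-\Phi})<\infty$,
\[
\N_0\Bigl[\mathbf{1}_{\{y\in[W_*,W^*]\}}\,G\bigl(\mathrm{tr}_y(\omega)\bigr)\,e^{-\langle\n_y,\Phi\rangle}\Bigr]
=\N_0\Bigl[\mathbf{1}_{\{y\in[W_*,W^*]\}}\,G\bigl(\mathrm{tr}_y(\omega)\bigr)\,e^{-\z_y\,\N_y(1-e^{-\Phi})}\Bigr].
\]
Indeed, the exponential formula for Poisson random measures, a monotone-class argument, and a monotone passage to the limit removing the restriction $\N_y(1-e^{-\Phi})<\infty$ then give the full statement, independence of $\mathrm{tr}_y(\omega)$ included (the right-hand side depends on $\mathrm{tr}_y(\omega)$ only through $\z_y$). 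To establish the displayed identity I would use a discrete approximation: for $\ve>0$, replace $\z_y$ by the $\ve$-approximation occurring in \eqref{formu-exit} and replace the family of excursions by the excursions of $\omega$ grafted along a corresponding finite set of approximate ``cut points''. At this discrete level, the branching property of the Brownian snake along its genealogical tree (subtrees hanging from distinct vertices evolve as independent Brownian snakes, given the part of the tree below them), together with the Markov property of the snake at the first hitting times of $y$, shows that the grafted pieces are, conditionally on the truncated object, mutually independent and each distributed according to $\N_y$; taking Laplace transforms yields the $\ve$-analogue of the displayed identity.

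The step I expect to be the main obstacle is the passage to the continuum limit $\ve\to 0$: the number of approximate cut points blows up while the grafted excursions become ``small'', so the convergence of the approximating point measures to $\n_y$ and the emergence of the Poisson$(\z_y\,\N_y)$ structure must come from a law-of-rare-events argument, which in turn rests on the precise convergence in \eqref{formu-exit} and on uniform-integrability estimates controlling the excursions of small duration. A secondary technical point, already visible above, is that $\N_y$ is only $\sigma$-finite, so the restriction $\N_y(1-e^{-\Phi})<\infty$ has to be carried throughout and removed only at the very end by monotone convergence. The whole argument is the branching analogue of the classical description of the excursions of linear Brownian motion away from a point as a Poisson process run along the local-time clock, and it is carried out in detail in \cite[Appendix]{subor}; see also \cite[Ch.~V]{Zurich}.
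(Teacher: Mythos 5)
The paper does not actually prove this proposition: it is recalled from the literature, with the proof delegated to the appendix of \cite{subor} (see also \cite[Chapter V]{Zurich}), so there is no internal argument to compare yours against. Your outline follows exactly that standard route: the reduction of the full statement (conditional Poisson structure plus independence of $\mathrm{tr}_y(\omega)$) to the single conditional Laplace-functional identity
$\N_x\bigl[\mathbf{1}_{\{y\in[W_*,W^*]\}}G(\mathrm{tr}_y(\omega))e^{-\langle\n_y,\Phi\rangle}\bigr]
=\N_x\bigl[\mathbf{1}_{\{y\in[W_*,W^*]\}}G(\mathrm{tr}_y(\omega))e^{-\z_y\N_y(1-e^{-\Phi})}\bigr]$
is correct (given the measurability of $\z_y$ with respect to $\sigma(\mathrm{tr}_y(\omega))$, which you rightly note and which is part of the standard exit-measure theory), the reduction to $x=0$, $y>0$ by translation and reflection is fine, and the $\sigma$-finiteness caveat on $\N_y$ is handled appropriately.

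The gap is that the identity itself is never established: the entire technical content of the special Markov property lies in the passage from the approximate cut-point picture to the continuum, i.e.\ in showing that the conditional Laplace functional of the excursion point measure, given the truncated snake, is exactly $\exp(-\z_y\,\N_y(1-e^{-\Phi}))$. You describe this as the ``main obstacle'', invoke a law-of-rare-events heuristic, and then point to \cite{subor} and \cite{Zurich} for the details --- which is precisely what the paper already does. Moreover, the heuristic as stated is not quite how the argument runs: the grafted pieces at a fixed discretization level are not individually distributed as $\N_y$-excursions; the actual proof computes the conditional Laplace functional via the strong Markov property of the snake together with the approximation \eqref{formu-exit} of the exit local time, and the Poisson structure emerges from an exponential-formula computation rather than from a rare-events limit of i.i.d.\ blocks. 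So as a proof your proposal is incomplete at its central step; as a description of the strategy used in the cited references it is essentially accurate.
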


We now introduce a process called the exit measure process at a point, which will play an important 
role in the excursion theory discussed below. Let $x\in \R$ and argue under the excursion  measure $\N_x$.
Also fix another point $y\in\R$ (which may be equal to $x$). Since, conditionally on $\zeta_s$, $W_s$ is 
just a Brownian path with lifetime $\zeta_s$, we can make sense of its local time at level $y$,
which we denote by $\mathcal{L}^y(W_s)=(\mathcal{L}^y_t(W_s))_{0\leq t\leq \zeta_s}$, and the mapping
$s\mapsto \mathcal{L}^y(W_s)$, with values in $(\W,d_\W)$, is continuous (note that $(W_s,\mathcal{L}^y(W_s))$ can be viewed 
as the Brownian snake whose spatial motion is the pair formed by Brownian motion and its local time at $y$). Then, for every $r\geq 0$ and $s\in[0,\sigma]$, set
$$\eta^y_r(W_s)=\inf\{t\in[0,\zeta_s]: \mathcal{L}^y_t(W_s)\geq r\},$$
with the usual convention $\inf\varnothing=\infty$. From the general theory of exit measures \cite[Chapter V]{Zurich},
we get, for every $r>0$, the existence of the almost sure limit
$$\mathcal{X}^y_r=\lim_{\ve\to 0} \frac{1}{\ve}\int_0^\sigma \dd s\, \mathbf{1}_{\{\eta^y_r(W_s)\leq \zeta_s\leq \eta^y_r(W_s)+\ve\}}.$$
Roughly speaking, $\mathcal{X}^y_r$ measures the
``quantity'' of paths $W_s$ that end at $y$ after having accumulated a
local time at $y$ exactly equal to $r$. See the discussion in the introduction of \cite{ALG} for more details. 

Suppose that $y\not = x$. In that case, we also take
$\mathcal{X}^y_0=\z_y$ (compare the last display with \eqref{formu-exit}). Then
under the probability measure $\N_x(\cdot\mid y\in[W_*,W^*])=\N_x(\cdot\mid \z_y>0)$, conditionally on $\z_y$,
the process $(\mathcal{X}^y_r)_{r\geq 0}$ is a 
continuous-state branching process with branching
mechanism $\varphi(u)=\sqrt{8/3}\,u^{3/2}$ (in short, a $\varphi$-CSBP)  started at $\z_y$. 
In particular, $(\mathcal{X}^y_r)_{r\geq 0}$ has a c\`adl\`ag modification, which we consider from now on.
We refer
to \cite[Chapter II]{Zurich}
for basic facts about continuous-state branching processes, and to \cite{ALG} for 
the preceding facts.

In the case $y=x$, we take $\mathcal{X}^x_0=0$ by convention, and the process $(\mathcal{X}^x_r)_{r\geq 0}$ is distributed under $\N_x$ according to the excursion measure 
of the $\varphi$-CSBP. This means that, if $\n=\sum_{k\in K}\delta_{\omega_k}$
is a Poisson point measure with intensity $\alpha\,\N_x$, the
process $X$ defined by $X_0=\alpha$ and, for every $r>0$,
$$X_r:=\sum_{k\in K} \mathcal{X}^x_r(\omega_k),$$
is a $\varphi$-CSBP started at $\alpha$ (see \cite[Section 2.4]{LGR}). 

In all cases, we call $(\mathcal{X}^y_r)_{r\geq 0}$ the exit measure process at $y$. 
Local times are related to this process by the formula
\begin{equation}
\label{local3}
\ell^y=\int_0^\infty \dd r \,\XX^y_r,
\end{equation}
which holds $\N_x$ a.e., for every $y\in\R$.
See \cite[Proposition 26]{LGR} when $y\not = x$, and \cite[Proposition 3.1]{LGR2} when $y=x$.

\subsection{The positive excursion measure}

We now introduce another important measure
on $\S_0$. There exists 
a $\sigma$-finite measure $\N^*_0$ on
$\S_0$, which is supported on the set $\S_0^+$ of nonnegative snake trajectories, such that,
for every bounded continuous function $G$ on $\S_0^+$ that vanishes on $\{\omega\in\S_0^+:W^*(\omega)\leq \delta\}$
for some $\delta >0$, we have
$$\N^*_0(G)=\lim_{\ve\to 0}\frac{1}{\ve}\,\N_\ve(G(\mathrm{tr}_{0}(\omega))).$$
See \cite[Theorem 23]{ALG}. Under $\N^*_0(\dd \omega)$, each  path $\omega_s$, for $0<s<\sigma$, starts from $0$, then
stays positive during some time interval $(0,u)$, and is stopped immediately
when it returns to $0$, if it does return to $0$.

Similarly to the definition of exit measures, one can make sense of the ``quantity'' of paths $\omega_s$ that return to $0$ under $\N^*_0$.
To this end, one proves that the limit
\begin{equation}
\label{approxz*0}
\z^*_0:=\lim_{\ve\to 0} \frac{1}{\ve^2} \int_0^\sigma \dd s\,\mathbf{1}_{\{\wh W_s<\ve\}}
\end{equation}
exists $\N^*_0$ a.e. See \cite[Section 10]{Disks}. 
Notice that replacing the 
limit by a liminf in \eqref{approxz*0} allows us to make sense of $\z^*_0(\omega)$ for every $\omega\in\S_0^+$.

We can then define conditional versions of the
measure $\N^*_0$, which play a fundamental role in the present work. Recall the definition of the
scaling operators $\theta_\lambda$ in \eqref{scalereln}. According to
\cite[Proposition 33]{ALG}, there exists a unique collection $(\N^{*,z}_0)_{z>0}$ of probability measures on $\S_0^+$
such that:
\begin{flalign}
\nonumber&\rm{(i)}\;\N^*_0={\displaystyle
 \sqrt{\frac{3}{2 \pi}} \int_0^\infty \mathrm{d}z\,z^{-5/2}\, \N^{*,z}_0}.&&\\
\label{N*props}&\text{(ii) For every $z>0$, $\N^{*,z}_0$ is supported on $\{\z^*_0=z\}$.}\\
&\nonumber\text{(iii) For every $z,z'>0$, $\N^{*,z'}_0=\theta_{z'/z}(\N^{*,z}_0)$.}
\end{flalign}
Informally, $\N^{*,z}_0=\N^*_0(\cdot\mid \z^*_0=z)$. Note that the ``law'' of $\z^*_0$ under $\N^*_0$ 
is the $\sigma$-finite measure
\begin{equation}
\label{Levy-mea}
\bn(\dd z)=\mathbf{1}_{\{z>0\}}\,\sqrt{\frac{3}{2 \pi}} 
\,z^{-5/2}\,\mathrm{d}z.
\end{equation}
It will be convenient to write $\check\N^{*,z}_0$
for the pushforward of $\N^{*,z}_0$ under the mapping $\omega\to -\omega$. Furthermore, for
every $a\in\R$, we write $\N^{*,z}_a$, resp. $\check\N^{*,z}_a$ for the pushforward of $\N^{*,z}_0$,
resp. of $\check\N^{*,z}_0$, under the shift $\omega\mapsto \omega + a$. 

We state a useful technical lemma.

\begin{lemma}
\label{maxN*}
For every $z>0$ and $\ve >0$, $\N^{*,z}_0(W^*<\ve)>0$. Moreover, there exists a constant $C$
such that, for every  $z>0$ and  $x>0$,
$$\N^{*,z}_0(W^*>x)\leq C \,\frac{z^3}{x^6}.$$
\end{lemma}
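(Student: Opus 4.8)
The goal is to prove Lemma~\ref{maxN*}: positivity of $\N^{*,z}_0(W^*<\ve)$ for all $z,\ve>0$, together with the bound $\N^{*,z}_0(W^*>x)\le C z^3/x^6$.

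\medskip

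\textbf{Reducing to $z=1$ via scaling.} The plan is to exploit property~(iii) in \eqref{N*props}, namely $\N^{*,z}_0=\theta_{z}(\N^{*,1}_0)$ (taking the reference value to be $z=1$). Under the scaling operator $\theta_\lambda$ of \eqref{scalereln}, a snake trajectory is rescaled so that spatial coordinates are multiplied by $\sqrt\lambda$; in particular $W^*(\theta_\lambda(\omega))=\sqrt\lambda\,W^*(\omega)$. Hence $\N^{*,z}_0(W^*<\ve)=\N^{*,1}_0(\sqrt{z}\,W^*<\ve)=\N^{*,1}_0(W^*<\ve/\sqrt z)$, and similarly $\N^{*,z}_0(W^*>x)=\N^{*,1}_0(W^*>x/\sqrt z)$. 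So it suffices to show (a) $\N^{*,1}_0(W^*<\delta)>0$ for every $\delta>0$, and (b) $\N^{*,1}_0(W^*>r)\le C r^{-6}$ for all $r>0$; then substituting $r=x/\sqrt z$ gives exactly $C z^3/x^6$. For small $r$ the bound is trivial since probabilities are at most $1$ and $C$ can be chosen large, so the real content of (b) is the decay as $r\to\infty$.

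\medskip

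\textbf{The upper bound via the defining relation for $\N^*_0$.} For the tail bound I would first work with the unconditioned measure $\N^*_0$ and the disintegration (i) in \eqref{N*props}: $\N^*_0=\sqrt{3/(2\pi)}\int_0^\infty z^{-5/2}\N^{*,z}_0\,\dd z$. Applying this to the event $\{W^*>r\}$ and using the scaling identity $\N^{*,z}_0(W^*>r)=\N^{*,1}_0(W^*>r/\sqrt z)$, one gets, after the change of variables $z\mapsto z r^2$,
\[
\N^*_0(W^*>r)=\sqrt{\tfrac{3}{2\pi}}\,r^{-3}\int_0^\infty z^{-5/2}\,\N^{*,1}_0(W^*>z^{-1/2})\,\dd z.
\]
So if we can show $\N^*_0(W^*>r)=c_*\,r^{-3}$ for an explicit constant (or merely $\le C r^{-3}$), and separately that the integral above, restricted away from $z$ near $0$, is bounded below, we can solve for the tail of $\N^{*,1}_0$. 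A cleaner route: the measure $\N^*_0$ is obtained as $\lim_{\ve\to0}\ve^{-1}\N_\ve(G(\mathrm{tr}_0(\omega)))$, and $W^*$ is unaffected by truncation at $0$ (truncating at $0$ only removes subtrees hanging below excursions that return to $0$, but since under $\N_\ve$ with $\ve>0$ everything relevant to $W^*$ lies above... ) — one must be a little careful here, but $\mathrm{tr}_0$ does not increase $W^*$, so $\{W^*(\mathrm{tr}_0(\omega))>r\}\subset\{W^*(\omega)>r\}$, giving $\N^*_0(W^*>r)\le \liminf_\ve \ve^{-1}\N_\ve(W^*>r)$. Now for $\ve$ small, $\N_\ve(W^*>r)=\N_0(W^*>r-\ve)=\tfrac{3}{2(r-\ve)^2}$ by the exact formula recalled in Section~\ref{sna-mea}, so $\ve^{-1}\N_\ve(W^*>r)$ blows up — that crude bound is useless. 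Instead I would use the translation/scaling structure more carefully: shift so paths start at $\ve$ and note $W^*>r$ forces the snake to reach height $r$, an event whose $\N_\ve$-measure combined with the $\ve^{-1}$ normalization and the extra constraint that comes from surviving the truncation gives the right power. Concretely, I expect the honest computation gives $\N^*_0(W^*>r)\asymp r^{-3}$ — the exponent $-3$ (rather than $-2$) is the signature of the ``positivity'' conditioning, which costs one extra power. Combined with the disintegration above this forces $\N^{*,1}_0(W^*>\cdot)$ to decay like the sixth power, which is the claim.

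\medskip

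\textbf{Positivity of $\N^{*,z}_0(W^*<\ve)$.} By the scaling reduction it is enough to prove $\N^{*,1}_0(W^*<\delta)>0$. Suppose for contradiction it were $0$ for some $\delta>0$; then by monotonicity $\N^{*,1}_0(W^*<\delta)=0$, and by scaling $\N^{*,z}_0(W^*<\delta\sqrt z)=0$ for all $z$, i.e. $\N^{*,z}_0(W^*\ge\delta\sqrt z)=1$. Plugging into the disintegration (i), $\N^*_0(W^*<\delta\sqrt z$ for the relevant $z)$... more simply, $\N^*_0(\{W^*<\delta\sqrt{\z^*_0}\})=0$. On the other hand, from the approximation \eqref{approxz*0} defining $\z^*_0$ and the fact that $\N^*_0=\lim\ve^{-1}\N_\ve(G\circ\mathrm{tr}_0)$, one can exhibit snake trajectories (e.g. by taking $\N_\ve$-typical small excursions that stay below $0$... rather, that stay in $[0,\delta)$) with $W^*$ arbitrarily small and $\z^*_0$ of order one, contradicting the above. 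Alternatively, and perhaps most cleanly, I would use that the support of $\N^*_0$ contains trajectories with arbitrarily small range: since $\N^*_0$ is the limit of $\ve^{-1}\N_\ve(\cdot\circ\mathrm{tr}_0)$ and under $\N_\ve$ there is positive measure on trajectories whose paths never leave $[0,\delta)$ (the snake with a tiny lifetime), these survive the truncation $\mathrm{tr}_0$ unchanged, so $\N^*_0(0<W^*<\delta)>0$; then the disintegration (i) forces $\N^{*,z}_0(W^*<\delta)>0$ for $z$ in a set of positive $\bn$-measure, and scaling (iii) propagates this to all $z$ and all $\delta$.

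\medskip

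\textbf{Main obstacle.} The delicate point is the sharp tail estimate $\N^*_0(W^*>r)\le C r^{-3}$ (equivalently, pinning down the correct power). The naive bounds coming from $\ve^{-1}\N_\ve(W^*>r)$ are off by a factor of $\ve^{-1}$; one must genuinely use that the truncation-at-$0$ operation, together with the $\ve^{-1}$ renormalization, is what produces $\N^*_0$, and extract the extra decay from the requirement that a path reaching level $r$ must first have survived near $0$. I would handle this either by a direct second-moment / first-moment computation against the exit-measure process $\XX^0_\cdot$ at level $0$ (using \eqref{local3} and known CSBP estimates), or by invoking the explicit law of $W^*$ under $\N^{*,z}_0$ if it can be read off from \cite{ALG}; in the write-up the cleanest path is probably to bound $\N^*_0(W^*>r)$ by comparison with $\N_0(\exists\, s:\wh W_s>r)$ on a sub-tree of controlled exit measure, which yields the $r^{-3}$ after accounting for the density $z^{-5/2}$ in \eqref{Levy-mea}.
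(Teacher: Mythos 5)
Your scaling step is fine: by \eqref{N*props}(iii) and \eqref{scalereln} one has $\N^{*,z}_0(W^*>x)=\N^{*,1}_0(W^*>x/\sqrt z)$ and $\N^{*,z}_0(W^*<\ve)=\N^{*,1}_0(W^*<\ve/\sqrt z)$, so the lemma reduces to a tail bound $\N^{*,1}_0(W^*>r)\le Cr^{-6}$ and to positivity of $\N^{*,1}_0(W^*<\delta)$ for \emph{every} $\delta>0$. But neither of these is actually established, and the route you propose for the tail cannot work in principle. Push your own change of variables to the end: the disintegration \eqref{N*props}(i) together with scaling gives the identity
$$\N^*_0(W^*>r)\;=\;2\sqrt{\tfrac{3}{2\pi}}\;r^{-3}\int_0^\infty u^2\,\N^{*,1}_0(W^*>u)\,\dd u ,$$
so the quantity $\N^*_0(W^*>r)$ is \emph{automatically} a constant times $r^{-3}$ whenever the integral is finite, and a bound $\N^*_0(W^*>r)\le Cr^{-3}$ (which you in any case only assert, not prove) carries no information beyond finiteness of the third moment of $W^*$ under $\N^{*,1}_0$. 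By Markov's inequality that yields only $\N^{*,z}_0(W^*>x)\le C z^{3/2}x^{-3}$, which is strictly weaker than the claimed $Cz^3x^{-6}$ in the only regime where the claim has content, namely $x\gg\sqrt z$. One cannot ``solve for the tail'' from the disintegration; the extra decay must come from information specific to the conditioned measures. This loss of strength matters downstream: in the proof of Lemma \ref{lem-tech} one needs $\sum_i\N^{*,\delta_i}_0(W^*>u)<\infty$ where the boundary sizes $\delta_i$ are the jumps of a $\varphi$-CSBP, for which $\sum_i\delta_i^3<\infty$ but $\sum_i\delta_i^{3/2}=\infty$ a.s.; so the exponent $3$ in $z$ is essential. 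The paper does not attempt a derivation of the kind you sketch: it quotes the bound from Corollary 5 of \cite{LGR}, where it comes from an explicit estimate for the measures $\N^{*,z}_0$ themselves, and this external input (or an equivalent computation) is genuinely needed.

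The positivity argument also has a gap. The defining limit $\N^*_0(G)=\lim_{\ve\to0}\ve^{-1}\N_\ve(G(\mathrm{tr}_0(\omega)))$ is stated only for functionals $G$ vanishing on $\{W^*\le\delta\}$, so it cannot be used to evaluate $\N^*_0$ on events of small maximum such as $\{0<W^*<\delta\}$; and even granting $\N^*_0(W^*<\delta)>0$ (it is in fact infinite), the disintegration only yields $\N^{*,z}_0(W^*<\delta)>0$ for $z$ in some set of positive Lebesgue measure, which after scaling gives positivity of $\N^{*,z}_0(W^*<\ve)$ only when the ratio $\ve/\sqrt z$ is bounded below. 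The whole content of the first assertion is precisely the opposite regime $\ve\ll\sqrt z$ (a tree of boundary size $z$ squashed below height $\ve$), and your contradiction argument ends up presupposing exactly what must be proved, namely that trajectories with $W^*\ll\sqrt{\z^*_0}$ carry positive mass under $\N^*_0$. A genuine argument for this support property (the paper omits it) would have to come from a construction or an explicit formula under $\N^{*,z}_0$, not from the disintegration and scaling alone.
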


We omit the proof of the first assertion. For the second one, see \cite[Corollary 5]{LGR}. 
%
%
%
\smallskip

Recall the notation $\mathcal{ Y}_{(\omega)}$ for the occupation measure
of $\omega\in\mathcal{S}$ from \eqref{occu-measure}.

\begin{lemma}
Let $z>0$. Then, $\N^{*,z}_0(\dd\omega)$ a.s. the measure $\mathcal{Y}_{(\omega)}$ has
a continuous density with respect to Lebesgue measure on $\R$. This density 
vanishes on $(-\infty,0]$ and  
is
continuously differentiable on $(0,\infty)$.
\end{lemma}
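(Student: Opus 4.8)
\emph{Strategy.} The statement to be proved is invariant under the scaling maps $\theta_\lambda$ of \eqref{scalereln}: by \eqref{occu-measure} and \eqref{scalereln}, if $\mathcal{Y}_{(\omega)}$ has density $(\ell^y)_{y\in\R}$, then $\mathcal{Y}_{(\theta_\lambda\omega)}$ has density $y\mapsto\lambda^{3/2}\,\ell^{y/\sqrt\lambda}$, and $y\mapsto y/\sqrt\lambda$ is a homeomorphism of $\R$ fixing $0$. Hence it suffices to prove the statement for $\N^*_0$-a.e.\ $\omega$: by \eqref{N*props}(i) this yields the conclusion $\N^{*,z}_0$-a.e.\ for Lebesgue-a.e.\ $z>0$, and then \eqref{N*props}(iii) together with the scaling invariance just noted extends it to every $z>0$. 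So I work under $\N^*_0$ from now on.

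That $\mathcal{Y}_{(\omega)}$ is carried by $[0,\infty)$, so that any continuous density vanishes on $(-\infty,0)$, is immediate since $\N^*_0$ is supported on $\S_0^+$. For the continuous differentiability on $(0,\infty)$, I would use the relation $\N^*_0(G)=\lim_{\ve\to0}\ve^{-1}\N_\ve(G(\mathrm{tr}_0(\omega)))$ and argue first under $\N_\ve$, where there is no conditioning. By the special Markov property (Proposition~\ref{SMP}), the occupation measure decomposes as $\mathcal{Y}_{(\omega)}=\mathcal{Y}_{(\mathrm{tr}_0(\omega))}+\sum_{j\in J}\mathcal{Y}_{(\omega^j)}$, where the $\omega^j$ are the excursions of $\omega$ away from $0$ and form, conditionally on $\z_0$, a Poisson measure with intensity $\z_0\,\N_0$, while $\mathrm{tr}_0(\omega)$ has all its labels in $[0,\infty)$. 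By Proposition~\ref{exist-LT}, $\N_\ve$-a.e.\ the measures $\mathcal{Y}_{(\omega)}$ and $\mathcal{Y}_{(\omega^j)}$, $j\in J$, have continuously differentiable densities; and on any $[a,b]\subset(0,\infty)$ only finitely many $\omega^j$ have $W^*(\omega^j)\ge a$ (because $\N_0(W^*\ge a)<\infty$), so the density of $\mathcal{Y}_{(\mathrm{tr}_0(\omega))}$ restricted to $[a,b]$ is a finite sum of continuously differentiable functions, hence is continuously differentiable there. Letting $[a,b]$ exhaust $(0,\infty)$ and transferring the conclusion to $\N^*_0$ via the above limit relation gives the desired regularity on $(0,\infty)$. (Alternatively, one can reproduce directly under $\N^{*,z}_0$ the moment estimates and Kolmogorov-type argument behind Proposition~\ref{exist-LT}, working with the exit measure processes at the levels $y>0$ and the identity \eqref{local3}.)

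It remains to show that this density extends continuously to $y=0$ with value $0$; this is the delicate point, since $\N^*_0$ is infinite and the branching structure degenerates at $0$, so all estimates must be uniform as $y\downarrow0$. Here I would first invoke the defining formula \eqref{approxz*0} for $\z^*_0$, which gives $\mathcal{Y}_{(\omega)}([0,\delta])=\int_0^\sigma\mathbf 1_{\{0\le\wh W_s\le\delta\}}\,\dd s=\z^*_0\,\delta^2\,(1+o(1))$ as $\delta\downarrow0$, whence $\delta^{-1}\int_0^\delta\ell^y\,\dd y\to0$. To promote this Cesàro statement to the pointwise limit $\lim_{y\downarrow0}\ell^y=0$, I would establish a uniform modulus of continuity for $y\mapsto\ell^y$ on a fixed neighbourhood of $0$, via Kolmogorov's criterion applied on the events $\{W^*>c\}$, $c>0$ (which have finite $\N^*_0$-measure by Lemma~\ref{maxN*} and \eqref{N*props}(i)), starting from moment bounds of the form $\N^*_0(|\ell^y-\ell^{y'}|^p)\le C\,|y-y'|^{1+\eta}$ holding uniformly for $y,y'$ in a bounded set. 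Obtaining such uniform bounds near the boundary point $0$ is the main obstacle of the proof. Granting them, $y\mapsto\ell^y$ extends continuously to $[0,\infty)$, its value at $0$ equals $\lim_{\delta\downarrow0}\delta^{-1}\int_0^\delta\ell^y\,\dd y=0$, and together with the vanishing on $(-\infty,0)$ this yields a continuous density on all of $\R$, completing the proof under $\N^*_0$ and hence, by the reduction above, for every $z>0$.
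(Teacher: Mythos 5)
Your opening reduction (scaling to pass from $\N^{*,z}_0$ to $\N^*_0$) and your use of the decomposition $\mathcal{Y}_{(\omega)}=\mathcal{Y}_{(\tr_0(\omega))}+\sum_{j\in J}\mathcal{Y}_{(\omega^j)}$ to get smoothness away from $0$ are both in the spirit of the paper's argument, but there are two genuine gaps. First, you propose to prove the regularity of the density of $\mathcal{Y}_{(\tr_0(\omega))}$ under $\N_\ve$ and then ``transfer the conclusion to $\N^*_0$'' through the limit relation $\N^*_0(G)=\lim_{\ve\to0}\ve^{-1}\N_\ve(G(\tr_0(\omega)))$. That relation is a weak-type convergence statement for bounded continuous functionals vanishing near the zero trajectory; an almost-everywhere property of the approximating laws does not pass through such a limit, and no argument is offered to make it do so. The paper instead invokes the re-rooting property of $\N^*_0$ (\cite[Theorem 28]{ALG}, \cite[Theorem 5]{Mar}), which gives an \emph{exact} identity relating $\N^*_0$ to the measures $\N_b$, $b>0$, applied to $\tr_0(\omega)$; since the occupation measure is invariant under re-rooting, this reduces the lemma to the claim that, $\N_b$-a.e., $\mathcal{Y}_{(\tr_0(\omega))}$ has a continuous density, continuously differentiable on $(0,\infty)$ --- a statement about the finite-intensity setting of $\N_b$ where Proposition \ref{exist-LT} and the Poissonian structure of the excursions away from $0$ can be used directly.

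Second, and more seriously, the continuity of the density at $0$ --- which you correctly identify as the delicate point --- is not actually proved in your proposal: the Ces\`aro limit $\delta^{-1}\mathcal{Y}_{(\omega)}([0,\delta])\to0$ drawn from \eqref{approxz*0} only controls averages, and the uniform Kolmogorov-type bounds $\N^*_0(|\ell^y-\ell^{y'}|^p)\leq C|y-y'|^{1+\eta}$ near $y=0$ that you would need are explicitly ``granted'' rather than established (and obtaining such bounds directly under the infinite measure $\N^*_0$ is precisely the kind of estimate the paper goes out of its way to avoid). The paper's proof circumvents this entirely by a subtraction argument under $\N_b$: the candidate density of $\mathcal{Y}_{(\tr_0(\omega))}$ is written as $\ell^x(\omega)-\sum_{j\in J}\ell^x(\omega^j)$, where $\ell^x(\omega)$ is continuous by Proposition \ref{exist-LT}, the sum is continuously differentiable off $0$ because only finitely many excursions contribute near any $x\neq0$, and --- the key imported fact --- the continuity at $0$ of $x\mapsto\sum_{j\in J}\ell^x(\omega^j)$ is quoted from formula (3.9) and the subsequent discussion in \cite{LGR2}. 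Without either that input or a proof of your uniform moment bounds, the behaviour of the density at the boundary point $0$ remains open, so the proposal as it stands does not establish the lemma.
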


\proof Via scaling arguments, it is enough to prove this with $\N^{*,z}_0$ replaced by $\N^*_0$.
Then, we can use  the re-rooting property of $\N^{*}_0$ (see \cite[Theorem 28]{ALG}
or \cite[Theorem 5]{Mar})
to obtain that it suffices to prove the following claim: For every $b>0$, $\N_b(\dd \omega)$ a.e., the occupation
measure $\mathcal{Y}_{(\tr_0(\omega))}$ has a continuous density, which vanishes on $(-\infty,0]$ and  
is
continuously differentiable on $(0,\infty)$. Note that, $\N_b(\dd \omega)$ a.e., $\mathcal{Y}_{(\tr_0(\omega))}$ is supported on
$[0,\infty)$ and thus, once we know that $\mathcal{Y}(\tr_0(\omega))$  has
a continuous density it is obvious that this density vanishes on $(-\infty,0]$. 

Let us fix $b>0$ and argue under $\N_b$. Writing $(\omega^j)_{j\in J}$ for the excursions of $\omega$ away from $0$,
one easily verifies that, $\N_b(\dd \omega)$ a.e.,
\begin{equation}
\label{decompo}
\mathcal{Y}_{(\omega)}=\mathcal{Y}_{(\tr_0(\omega))}+ \sum_{j\in J} \mathcal{Y}_{(\omega^j)}.
\end{equation}
We know that $\N_b(\dd \omega)$ a.e., the measure $\mathcal{Y}_{(\omega)}$ has a 
continuously differentiable density $(\ell^x(\omega))_{x\in\R}$ and the same holds for 
the measures $\mathcal{Y}_{(\omega^j)}$ since we know that (conditionally on $\z_0(\omega)$)
the snake trajectories $\omega^j$, $j\in J$ are the atoms of a Poisson point measure with intensity $\z_0\N_0$.
Note that, for every fixed $x\not =0$, there are only finitely many indices $j$ such that $\ell^x(\omega^j)>0$. It
then follows that the measure $\sum_{j\in J} \mathcal{Y}_{(\omega^j)}$ has a density, and this density is given for $x\not =0$ by
the function
$\sum_{j\in J} \ell^x(\omega^j)$, which is continuously differentiable on $\R\backslash\{0\}$. However, $\N_b(\dd \omega)$ a.e., the function
$$x\mapsto \sum_{j\in J} \ell^x(\omega^j)$$
is continuous on $\R$: we already know that it is continuous on $\R\backslash\{0\}$, and for the 
continuity at $0$ we refer to formula (3.9) and the subsequent discussion in \cite{LGR2}. 
From \eqref{decompo}, we now deduce that $\mathcal{Y}_{(\tr_0(\omega))}$ has a continuous density on 
$\R$, which is given by 
$$x\mapsto \ell^x(\omega)-\sum_{j\in J} \ell^x(\omega^j).$$
This completes the proof. \endproof

In what follows, we will use the same notation $(\ell^x(\omega))_{x\in\R}$
to denote the density of $\mathcal{Y}_{(\omega)}$ under $\N^{*,z}_0(\dd\omega)$
or under $\N^{*,z}_a(\dd\omega)$ for any $a\in\R$. 

\subsection{Excursion theory}
\label{sec:excu}

Let us now recall the main theorem of the excursion theory developed 
in \cite{ALG}. We fix $x\in\R$ and $y\in[x,\infty)$, and we argue under $\N_x(\dd \omega)$.
As in the classical setting of excursion theory for linear
Brownian motion, our goal is to describe the evolution of the labels $V_u$ on the connected components 
of $\{u\in\t(\omega): V_u(\omega)\not =y\}$. 
So, let $\mathcal{C}$ be such a connected component and write $\ov{\mathcal{C}}$
for the closure of $\mathcal{C}$.
We leave aside the case where $\mathcal{C}$ contains the root $\rho_{(\omega)}$ of $\t(\omega)$
(this case does not occur if $y=x$).
Then, there
is a unique point $u$ of $\ov{\mathcal{C}}$ at minimal distance from $\rho_{(\omega)}$, such that
all points of $\ov{\mathcal{C}}$ are descendants of $u$,
and we have $V_u=y$.
Following \cite{ALG}, we say that $u$ is an excursion debut (from $y$). We can then
define a snake trajectory $\omega^{(u)}$ that accounts for the connected component $\mathcal{C}$ and the labels on $\mathcal{C}$.
To this end, we first observe that the set of all descendants of $u$ in $\t(\omega)$ can be written as $p_{(\omega)}([s_0,s'_0])$ ,
where $s_0$ and $s'_0$ are
such that $p_{(\omega)}(s_0)=p_{(\omega)}(s'_0)=u$. Then, we first define a snake trajectory $\tilde\omega^{(u)}\in \S_y$
coding the subtree $p_{(\omega)}([s_0,s'_0])$ (and its labels) by setting
$$\tilde\omega^{(u)}_s(t):=\omega_{(s_0+s)\wedge s'_0}(\zeta_{s_0}+t)\,\hbox{ for }0\leq t\leq \zeta_{(s_0+s)\wedge s'_0}-\zeta_{s_0}.$$
The set $\ov{\mathcal{C}}$ is the subset of $p_{(\omega)}([s_0,s'_0])$ consisting of all $v$ such that
labels stay greater than $y$ along the line segment from $u$ to  $v$, except at $u$ and possibly at $v$. This leads us to define
$$\omega^{(u)}:=\tr_y(\tilde\omega^{(u)}).$$
Then one can check (see \cite{ALG} for more details) that the compact $\R$-tree $\ov{\mathcal{C}}$ is identified 
isometrically to the tree $\t(\omega^{(u)})$, and moreover this identification
preserves labels. Also, the restriction of the volume measure of $\t(\omega)$ to $\mathcal{C}$
corresponds to the volume measure of $\t_{(\omega^{(u)})}$ via the latter identification.

We say that
$\omega^{(u)}$ is an excursion above $y$ if the values of $V_v$ for $v\in\mathcal{C}$ are greater than $y$
and  that $\omega^{(u)}$ is an excursion
below  $y$  if the values of $V_v$ for $v\in\mathcal{C}$ are smaller than $y$. Note that an excursion away from $y$,
as considered in Proposition \ref{SMP}, will contain infinitely many
excursions above or below $y$. Let $\mathcal{Y}_{(\omega)}^{(y,\infty)}$ denote the
restriction of $\mathcal{Y}_{(\omega)}$ to $(y,\infty)$. Then, the preceding identification of
volume measures entails that
\begin{equation}
\label{decomp-occup}
\mathcal{Y}_{(\omega)}^{(y,\infty)}=\sum_{u\in\mathcal{D}_y^+} \mathcal{Y}_{(\omega^{(u)})},
\end{equation}
where $\mathcal{D}_y^+$ is the set of all debuts of excursions above $y$.

Recall that the exit measure process $(\mathcal{X}^y_r)_{r\geq 0}$ was defined in Section \ref{sna-mea}.
By Proposition 3 of \cite{ALG} (and an application of the special Markov property when $y\not=x$), excursion debuts 
from $y$ are in one-to-one 
correspondence with the jump times of the process $(\mathcal{X}^y_r)_{r\geq 0}$,
or equivalently with the jumps of this process,
in such a way that, if $u$ is an excursion debut and $s\in[0,\sigma]$ is such that
$p_{(\omega)}(s)=u$, the associated jump time of the exit measure process at $y$
is the total local time at $y$ accumulated by the path $W_s$. We can 
rank the jumps of $(\mathcal{X}^y_r)_{r\geq 0}$ in a sequence $(\delta_i)_{i\in\N}$
in decreasing order.
For every $i\in\N$, we write $u_i$ for the excursion debut associated with the jump $\delta_i$.  
The following theorem is essentially Theorem 4 in \cite{ALG}. We write $\N_x^{(y)}=\N_x(\cdot \mid \z_y>0)$ when $y\not=x$,
and $\N_x^{(x)}=\N_x$.

\begin{theorem}
\label{theo-excursion}
Under $\N_x^{(y)}$, conditionally on $(\mathcal{X}^y_r)_{r\geq 0}$,
the excursions $ \omega^{(u_i)}$, $i\in \N$, are independent, and independent
of $\tr_y(\omega)$, and, for every $i\in\N$, the 
conditional distribution of $\omega^{(u_i)}$ is 
$$\frac{1}{2}\Big(\N^{*,\delta_i}_y+\check\N^{*,\delta_i}_y\Big).$$
\end{theorem}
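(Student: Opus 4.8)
The plan is to establish Theorem~\ref{theo-excursion} by combining two ingredients that have essentially already been set up in Sections~\ref{sna-mea} and~\ref{sec:excu}: the special Markov property (Proposition~\ref{SMP}) applied at level $y$, and the description of the excursions above $y$ inside a single excursion away from $y$ via the ``positive excursion measure'' $\N^{*,z}_0$. First I would reduce to the case $y=x$; indeed if $y\neq x$ one conditions on $\z_y>0$ and applies the special Markov property, writing $\omega$ in terms of $\mathrm{tr}_y(\omega)$ together with the Poisson collection $(\omega^j)_{j\in J}$ of excursions away from $y$, which (given $\z_y$) has intensity $\z_y\,\N_y(\cdot)$. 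The excursions above $y$ of $\omega$ are exactly the excursions above $y$ of the individual $\omega^j$'s, and the exit measure process $\mathcal{X}^y_\cdot$ likewise decomposes as a sum over $j\in J$ of the exit measure processes at $y$ of the $\omega^j$'s; so it suffices to prove the statement for each $\omega^j$ under $\N_y$, i.e. to prove the theorem with $x=y$.

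So the heart of the matter is the case $x=y$, argued under $\N_x$. Here I would invoke Proposition~3 and Theorem~4 of \cite{ALG}, which is essentially what is being restated. The key structural fact is that the exit measure process $(\mathcal{X}^x_r)_{r\geq 0}$ is (under $\N_x$) distributed as the excursion measure of the $\varphi$-CSBP with $\varphi(u)=\sqrt{8/3}\,u^{3/2}$, a spectrally positive stable branching process; its jumps $(\delta_i)_{i\in\N}$ are in bijection with the excursion debuts $u_i$ from $x$, the bijection sending $u_i$ to the local time at $x$ accumulated by the path $W_{s}$ for $p_{(\omega)}(s)=u_i$. Conditionally on the whole trajectory of $(\mathcal{X}^x_r)_r$, one uses the branching/Markov structure of the Brownian snake: at each jump time $r_i$ of the CSBP a ``new'' sub-snake is born which, conditionally on the jump size $\delta_i$, evolves independently of everything constructed so far and of the other sub-snakes, and its law is the law of a snake trajectory started at $x$ conditioned to stay on one side of $x$ with exit measure (at $x$) equal to $\delta_i$ --- that is, $\N^{*,\delta_i}_x$ or, by symmetry of Brownian motion under reflection, $\check\N^{*,\delta_i}_x$, each with probability $\tfrac12$ since the excursion is equally likely to go above or below. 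This gives, conditionally on $(\mathcal{X}^x_r)_r$, the asserted independence of the $\omega^{(u_i)}$ and their independence of $\mathrm{tr}_x(\omega)$, with conditional law $\tfrac12(\N^{*,\delta_i}_x+\check\N^{*,\delta_i}_x)$.

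To connect the two sides cleanly I would record the scaling identity: since $\N^{*,z'}_0=\theta_{z'/z}(\N^{*,z}_0)$ by \eqref{N*props}(iii) and the snake scaling operator acts on exit measures in the corresponding way, the family $(\N^{*,\delta_i}_x)_i$ is exactly the correct object to match the jump sizes of the CSBP; this is where the normalization $\varphi(u)=\sqrt{8/3}\,u^{3/2}$ and the L\'evy measure \eqref{Levy-mea} must be checked to be mutually consistent, using property \eqref{N*props}(i) relating $\N^*_0$ to $\int_0^\infty z^{-5/2}\N^{*,z}_0\,dz$. Finally I would observe that $\mathrm{tr}_x(\omega)$ (which, for $x=y$, is the ``$x$-side-independent'' part) is measurable with respect to the complementary data and hence is conditionally independent of the excursions above $x$ given $(\mathcal{X}^x_r)_r$; combined with the $y\neq x$ reduction via the special Markov property this yields the full statement.

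The main obstacle I anticipate is not any single estimate but rather the bookkeeping needed to identify, rigorously and in the right generality, the conditional law of an individual sub-snake emanating from an excursion debut as precisely $\tfrac12(\N^{*,\delta_i}_x+\check\N^{*,\delta_i}_x)$ --- i.e. matching the ``excursion of the snake conditioned to stay one-sided with prescribed exit measure'' to the conditioned positive excursion measure $\N^{*,z}_0$ and verifying the conditional independence across distinct $i$ despite the genealogical constraints linking the $u_i$. This is exactly the content of \cite[Theorem 4]{ALG}, so in the write-up I would lean on that result and limit the present argument to (a) the special Markov property reduction to $x=y$ and (b) citing and, where needed, lightly re-deriving the consequences of \cite[Proposition 3, Theorem 4]{ALG} in the notation of this paper.
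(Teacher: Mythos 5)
Your proposal matches the paper's proof exactly: the paper likewise handles $y=x$ by citing Theorem~4 of \cite{ALG} and reduces $y\neq x$ to that case via the special Markov property (Proposition~\ref{SMP}). The extra exposition you give about the CSBP excursion structure and the one-to-one correspondence between jumps and excursion debuts is background the paper has already set up in Section~\ref{sec:excu}, so it adds no new ingredient.
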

\noindent We say that $\delta_i$ is the boundary size of the excursion $\omega^{(u_i)}$.

The case $y=x$ of Theorem \ref{theo-excursion} is Theorem 4 of \cite{ALG} 
and the case $y\not =x$ can then be derived by an application of the special Markov property (Proposition \ref{SMP}).

\subsection{The L\'evy bridge}
\label{subsec:bridge}

Recall from the Introduction and Section~\ref{sna-mea} that for  $\lambda\geq 0$, $\psi(\lambda)=\frac{1}{2}\varphi(\lambda)=\sqrt{2/3}\,\lambda^{3/2}$, and that $(U_t)_{t\geq 0}$ denotes a
 stable L\'evy process with index $3/2$, without negative jumps, and scaled so that its Laplace exponent is $\psi(\lambda)$. This means that
for every $t\geq 0$ and $\lambda >0$, we 
have
$$\E[\exp(-\lambda U_t)]=\exp(t\psi(\lambda)).$$
The L\'evy measure of $U$ is $\frac{1}{2}\bn(\dd z)$, where $\bn(\dd z)$ was defined in \eqref{Levy-mea}, and
$U_s$ has characteristic function
$$\E[e^{\II u U_s}]= e^{-s\Psi(u)},$$
where
\begin{equation}
\label{carac-U}
\Psi(u)= c_0 |u|^{3/2}\,(1+\II\,\mathrm{sgn}(u)),
\end{equation}
and $c_0=1/\sqrt{3}$. Recall also that $U_s$ has a density, $p_s(x)$, which by Fourier inversion is given by
 $$p_s(x)=\frac{1}{2\pi}\int e^{-\II ux-s\Psi(u)}\dd u.$$
Several properties of $p_s(x)$ were recalled in the Introduction. Another property we use is that  the distribution 
of $U_s$ is known to be unimodal, in the sense that there exists $a\in\R$ such that both functions
$x\mapsto p_s(a-x)$ and $x\mapsto p_s(a+x)$ are nonincreasing on $\R_+$
(cf. \cite[Theorem 2.7.5]{Zol}). 

For every $t>0$ and $y\in\R$, we can make sense of the process $(U_s)_{0\leq s\leq t}$ conditioned on $\{U_t=y\}$,
which is called the
$\psi$-L\'evy bridge from $0$ to $y$ in time $t$ (see \cite{FPY} for a construction in a much more general setting). 
Write $(U^{\mathrm{br},t,y})_{0\leq s\leq t}$ for a $\psi$-L\'evy bridge from $0$ to $y$ in time $t$. Then,
for every $r\in(0,t)$ and every nonnegative measurable function $F$ on the Skorokhod space
$\D([0,r],\R)$, we have
\begin{equation}\label{RNF}\E\Big[F\Big((U^{\mathrm{br},t,y})_{0\leq s\leq r}\Big)\Big] =\E\Bigg[ \frac{p_{t-r}(y-U_r)}{p_t(y)}\,F\Big((U_s)_{0\leq s\leq r}\Big)\Bigg].
\end{equation}
See \cite[Proposition 1]{FPY}. In particular, the law of $(U^{\mathrm{br},t,y}_s)_{0\leq s\leq r}$ has a bounded density 
with respect to the law of $(U_s)_{0\leq s\leq r}$. Via a simple time-reversal argument, the same holds for the
law of $(y-U^{\mathrm{br},t,y}_{(t-s)-})_{0\leq s\leq r}$. 

In what follows, when we write 
$$\E\Big[F\Big((U_s)_{0\leq s\leq t}\Big)\,\Big|\,U_t=y\Big],$$
this should always be understood as $\E[F( (U^{\mathrm{br},t,y})_{0\leq s\leq t})]$ (which makes sense 
for every choice of $y\in\R$).

\section{The connection with super-Brownian motion}
\label{sec:super}

Let us briefly recall the connection between the Brownian snake excursion measures $\N_x$
and super-Brownian motion, referring to \cite{Zurich} for more details. We fix $\alpha>0$, and consider a Poisson point measure on $\S$,
$$\n=\sum_{k\in K} \delta_{\omega_k}$$
with intensity $\alpha\,\N_0$. Then one can construct a 
one-dimensional super-Brownian motion $(\mathbf{X}_t)_{t\geq 0}$ 
with branching mechanism $\Phi(u)=2u^2$ and initial value $\mathbf{X}_0=\alpha\,\delta_0$, such that,
for any nonnegative measurable function $f$ on $\R$,
\begin{equation}
\label{occu-super}
\int_0^\infty \mathbf{X}_t(f)\, \dd t= \sum_{k\in K} \mathcal{Y}_{(\omega_k)}(f)
\end{equation}
where $\mathcal{Y}_{(\omega_k)}$ is defined in formula \eqref{occu-measure}. 
In a more precise way, the process $(\mathbf{X}_t)_{t\geq0}$ is defined by setting, for 
every $t>0$ and every nonnegative Borel function $f$ on $\R$,
$$\mathbf{X}_t(f)  := \sum_{k\in K}  \int_0^{\sigma(\omega_k)} f(\wh W_r(\omega_k))\,\dd_rl^t_r(\omega_k),$$
where $l^t_r(\omega_k)$ denotes the local time of the process $s\mapsto \zeta_s(\omega_k)$ at level $t$ and at time $r$, 
and the notation $\dd_rl^t_r(\omega_k)$ refers to integration with respect to the nondecreasing function $r\mapsto l^t_r(\omega_k)$
 (see Chapter 4 of \cite{Zurich}).
 
 The preceding representation of $\mathbf{X}$ allows us to consider excursions above and below $a$, for any $a\in\R$. 
 Consider for simplicity the case $a=0$. We define the exit measure process $(X^0_t)_{t\geq 0}$ at $0$
by setting $X^0_0=\alpha$ and, for $t>0$,
\begin{equation}\label{exitmdef}X^0_t=\sum_{k\in K} \mathcal{X}^0_t(\omega_k).
\end{equation}
As was already mentioned in Section \ref{sna-mea}, the process $(X^0_t)_{t\geq 0}$ is a $\varphi$-CSBP started at $\alpha$. Write $(\delta_i)_{i\in\N}$
for the sequence of its jumps ordered in decreasing size.
Then the collection of all excursions of $\omega_k$ above and below $0$, combined for all $k\in K$, is in one-to-one correspondence with 
the collection $(\delta_i)_{i\in\N}$. Moreover, if $\omega_i$ denotes the excursion associated with the jump $\delta_i$, then:
\begin{align}\label{indtexc}
&\text{The excursions 
$\omega_i$, $i\in \N$, are independent conditionally on $(X^0_t)_{t\geq 0}$,} \\
\nonumber&\text{and the conditional distribution of 
$\omega_i$ is
$\frac{1}{2}\Big(\N^{*,\delta_i}_y+\check\N^{*,\delta_i}_y\Big)$.}
\end{align}
All these facts are immediate consequences of Theorem \ref{theo-excursion} and the discussion preceding it. 
  
We are primarily interested in the total occupation measure
$$\mathbf{Y}:=\int_0^\infty \mathbf{X}_t \,\dd t.$$
Recall from the Introduction, the notation $L^x$, $\dot L^x$ for its continuous density, and its continuous derivative on 
$\{x\neq 0\}$, and $\dot L^{0+}$, $\dot L^{0-}$ for the right and left derivatives at $0$, respectively, and $\dot L^0:=\dot L^{0+}$.  It also follows from 
Sugitani \cite[Theorem 4]{Sug} and its proof that 
$$\dot L^{0+}=\lim_{x\to 0, x>0} \dot L^x\,,\quad \dot L^{0-}=\lim_{x\to 0, x<0} \dot L^x\,,$$
and
\begin{equation}\label{L0jump}\dot L^{0+}-\dot L^{0-}= -2\alpha.
\end{equation}

Fix $a\geq 0$, and write $\mathbf{Y}^{(a,\infty)}$ for the restriction of $\mathbf{Y}$
to $(a,\infty)$, and similarly $\mathcal{Y}_{(\omega_k)}^{(a,\infty)}$ for the restriction of $\mathcal{Y}_{(\omega_k)}$ to $(a,\infty)$. In what follows, we assume that 
$\{k\in K:W^*(\omega_k)>a\}$ is not empty. In view of our applications, we are interested in
excursions of $\omega_k$ above level $a$, combined for all $k\in K$, such that $W^*(\omega_k)>a$. We can order these excursions in a 
sequence $(\omega^{a,+}_j)_{j\in\N}$ in decreasing order of their boundary sizes (Theorem \ref{theo-excursion} implies that these
boundary sizes are distinct a.s.). From \eqref{decomp-occup} and \eqref{occu-super}, we have
$$\mathbf{Y}^{(a,\infty)}=\sum_{k\in K} \mathcal{Y}_{(\omega_k)}^{(a,\infty)}=\sum_{j\in \N} \mathcal{Y}_{(\omega^{a,+}_j)}.$$
Consequently, for every $h>0$, we have
\begin{equation}
\label{decomp-local}
L^{a+h}=\sum_{j=1}^\infty \ell^{a+h}(\omega^{a,+}_j)\,,\quad \dot L^{a+h}=\sum_{j=1}^\infty \dot\ell^{a+h}(\omega^{a,+}_j)\,.
\end{equation}
Note that there are only finitely many nonzero terms in the sums of the last display.

The next 
proposition will be a key ingredient of our approach. We can write the supremum of the support of $\mathbf{Y}$ as $R=\sup\{W^*(\omega_k):k\in K\}$.
 By \eqref{Lsupp}, we have for any $a\ge 0$, 
$\{L^a>0\}=\{R>a\}$, a.s.

\begin{proposition}
\label{key-ingre}
Let $a\geq 0$, let $F$ be a nonnegative measurable function on the space $C((-\infty,a],\R_+\times \R)$, 
and let $G$ be a nonnegative measurable function on 
$(\mathcal{S}_a)^\N$. Then,
$$\E\Big[\mathbf{1}_{\{R>a\}}\,F\Big((L^x,\dot L^x)_{x\in(-\infty,a]}\Big) G\Big((\omega^{a,+}_j)_{j\in\N}\Big)\Big]
=\E\Big[ F\Big((L^x,\dot L^x)_{x\in(-\infty,a]}\Big) \Phi_G(L^a,\frac{1}{2}\dot L^a)\Big]$$
where $\Phi_G(0,y)=0$ for every $y\in\R$, and, for every $t>0$ and $y\in\R$, $\Phi_G(t,y)$ is defined as follows. Let
$U^{\mathrm{br},t,y}$ be a $\psi$-L\'evy bridge from $0$ to $y$ in time $t$, and let
$(Z_j)_{j\in\N}$ be the collection of jumps of $U^{\mathrm{br},t,y}$ ordered in
nonincreasing size. Then,
$$\Phi_G(t,y)=\E\Big[G\Big((\varpi_j)_{j\in\N}\Big)\Big],$$
where, conditionally on $(Z_j)_{j\in\N}$, the random snake trajectories
$(\varpi_j)_{j\in\N}$ are independent, and, for every $j$, $\varpi_j$
is distributed according to $\N^{*,Z_j}_a$.
\end{proposition}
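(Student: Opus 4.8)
The plan is to deduce Proposition~\ref{key-ingre} by combining the Brownian snake excursion theory (Theorem~\ref{theo-excursion}) applied at the level $a$ with the identification of the exit measure process $(X^a_t)_{t\ge 0}$ as a $\varphi$-CSBP, and then recognizing the jumps of this CSBP, conditioned on its value being related to $(L^x,\dot L^x)_{x\le a}$, as the jumps of a $\psi$-L\'evy bridge. First I would note that by the special Markov property and the excursion decomposition recalled in Section~\ref{sec:super}, conditionally on $\mathrm{tr}_a(\mathbf{X})$ (equivalently, on $(\mathbf{X}_t)_{t\ge 0}$ restricted to paths that stay below $a$, which determines $(L^x,\dot L^x)_{x\le a}$), the excursions $(\omega^{a,+}_j)_{j\in\N}$ above $a$ with $W^*>a$ are, conditionally on the jumps $(\delta_i)$ of the exit measure process at $a$, independent with $\omega^{a,+}_j$ distributed as $\N^{*,\delta_j}_a$ (the ``below $a$'' excursions and the symmetrization $\tfrac12(\N^{*,z}_a+\check\N^{*,z}_a)$ are irrelevant once we restrict to excursions above $a$). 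So the entire content of the statement reduces to identifying, conditionally on $(L^x,\dot L^x)_{x\le a}$, the joint law of the ordered jump sizes $(\delta_j)_{j\in\N}$ of the exit measure process at $a$.

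**Main steps.** The key step is therefore to show that, conditionally on $\{L^a=t,\ \tfrac12\dot L^a=y\}$, the sequence $(\delta_j)_{j\in\N}$ has the law of the ordered jumps of the $\psi$-L\'evy bridge $U^{\mathrm{br},t,y}$ on $[0,t]$. This I would get from the following ingredients: (1) the exit measure process $(X^a_r)_{r\ge 0}$ at $a$ is a $\varphi$-CSBP, and since a CSBP with branching mechanism $\varphi(u)=\sqrt{8/3}\,u^{3/2}$ is a time-change / direct reading of a spectrally positive stable L\'evy process with the same jump structure, its jumps over its whole lifetime are exactly the jumps of such a process run until it hits $0$; (2) the relations $L^a=\int_0^\infty X^a_r\,\dd r$ (formula \eqref{local3}, summed over the Poisson family) and a companion identity for $\dot L^a$. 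The point is that $L^a$ is the total area under the CSBP excursion and $\dot L^a$ is (up to the factor $2$, explaining the $\tfrac12\dot L^a=y$) related to the signed ``boundary'' contribution, and conditioning a spectrally positive stable L\'evy excursion on its area being $t$ and on the appropriate linear functional being $2y$ should reproduce precisely the bridge $U^{\mathrm{br},t,y}$ via the absolute-continuity relation \eqref{RNF}. Concretely, I expect one invokes the result of \cite{Mar} (cited in the Introduction as giving the transition kernel in terms of the L\'evy bridge jumps) for exactly this identification, so that much of this step is a citation rather than a fresh computation. I would then handle the indicator $\mathbf{1}_{\{R>a\}}=\mathbf{1}_{\{L^a>0\}}$ and the degenerate case $t=0$ by the convention $\Phi_G(0,y)=0$, which is consistent since on $\{L^a=0\}$ there are no excursions above $a$.

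**Assembling the identity.** With the conditional law of $(\delta_j)$ identified, I would write, for bounded measurable $F,G$,
\begin{align*}
\E\Big[\mathbf{1}_{\{R>a\}}F\big((L^x,\dot L^x)_{x\le a}\big)G\big((\omega^{a,+}_j)_j\big)\Big]
&=\E\Big[\mathbf{1}_{\{R>a\}}F\big((L^x,\dot L^x)_{x\le a}\big)\,\E\big[G\big((\omega^{a,+}_j)_j\big)\,\big|\,\mathcal{G}_a\big]\Big],
\end{align*}
where $\mathcal{G}_a$ is the $\sigma$-field generated by $\mathrm{tr}_a$ of the snake family together with the exit measure process at $a$; the inner conditional expectation, by Theorem~\ref{theo-excursion} and the independence statement, equals $\E[G((\varpi_j)_j)\mid (\delta_j)_j]$ with $\varpi_j\sim\N^{*,\delta_j}_a$ conditionally independent; then one more conditioning, this time only on $(L^x,\dot L^x)_{x\le a}$, replaces $(\delta_j)_j$ by $(Z_j)_j$ the jumps of $U^{\mathrm{br},L^a,\dot L^a/2}$, giving exactly $\Phi_G(L^a,\tfrac12\dot L^a)$. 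A measure-theoretic approximation (monotone class / $F,G$ first bounded continuous, then general) upgrades this to all nonnegative measurable $F,G$.

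**Expected obstacle.** The genuine difficulty is step (1)--(2): cleanly establishing that, conditionally on $(L^x,\dot L^x)_{x\le a}$, the jumps $(\delta_j)$ of the exit measure process at $a$ are those of the L\'evy \emph{bridge} with the right endpoints. This requires (i) knowing that the only way $(L^x,\dot L^x)_{x\le a}$ enters is through the pair $(L^a,\dot L^a)$ --- i.e.\ a Markov-type reduction, which is where the results of \cite{Mar} do the heavy lifting --- and (ii) correctly matching the two linear functionals of the CSBP excursion (total area $\leftrightarrow L^a$, and the boundary/derivative functional $\leftrightarrow \tfrac12\dot L^a$) so that the stable-excursion-to-bridge absolute continuity \eqref{RNF} applies with exactly the parameters $(t,y)=(L^a,\tfrac12\dot L^a)$. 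Everything else is bookkeeping with the special Markov property and a standard conditioning/approximation argument.
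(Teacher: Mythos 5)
The paper's own ``proof'' of this proposition is a pointer: it cites \cite{Mar}, Section~6, and observes that the one ingredient beyond Theorem~\ref{theo-excursion} is ``the conditional distribution of the boundary sizes of excursions above level $a$ given the collection of boundary sizes of excursions below $a$'' (cf.\ formula~(24) of \cite{Mar}). Your proposal likewise defers that identification to \cite{Mar} and assembles the result by conditioning and a monotone-class step, so at the level of strategy you are on the same track as the paper. However, your sketch of the identification contains two inaccuracies that are worth flagging because they touch exactly the step the paper singles out. First, the boundary sizes of the above-excursions are \emph{not} all the jumps of the exit measure process at $a$: Theorem~\ref{theo-excursion} assigns to each jump $\delta_i$ of $(\mathcal{X}^a_r)_{r\ge 0}$ the excursion law $\tfrac12\big(\N^{*,\delta_i}_a + \check\N^{*,\delta_i}_a\big)$, so each jump is, independently of the others, the boundary size of an above-excursion with probability $\tfrac12$ and of a below-excursion otherwise. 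Your parenthetical remark that ``the below-$a$ excursions \dots are irrelevant once we restrict to excursions above $a$'' obscures this Bernoulli$\big(\tfrac12\big)$ thinning, which is precisely why the $\psi$-L\'evy bridge (L\'evy measure $\tfrac12\bn$) appears in the statement rather than a bridge of the $\varphi$-L\'evy process (L\'evy measure $\bn$) driving the CSBP via Lamperti; the paper explicitly says this conditional-thinning structure is the content that must be understood beyond Theorem~\ref{theo-excursion}. Second, under the Lamperti transform $L^a=\int_0^\infty X^a_r\,\dd r$ is the first hitting time of $0$ by the associated L\'evy process, not its area, so ``conditioning \dots on its area being $t$'' should be ``on its lifetime being $t$''; this is what makes $U^{\mathrm{br},t,y}$ a bridge in \emph{time} $t$ as in Section~\ref{subsec:bridge}. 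With these two corrections your outline is consistent with what \cite{Mar} actually establishes, and the final conditional-expectation assembly and measure-theoretic upgrade are correct.
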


See \cite[Section 6]{Mar} for a proof of this proposition (cf.
formula (38) in \cite{Mar}). Proposition \ref{key-ingre} is basically a consequence 
of Theorem \ref{theo-excursion}, but one needs to understand the conditional distribution
of the boundary sizes of excursions above level $a$ given the collection of boundary sizes of
excursions below $a$, see in particular formula (24) in \cite{Mar}.

 Thanks to formula \eqref{decomp-local}, Proposition \ref{key-ingre} immediately gives
 the (time-homogeneous) Markov property of the process $(L^x,\dot L^x)_{x\geq 0}$. Moreover, this
proposition shows that, for every $t>0$ and $y\in\R$, the conditional distribution of $(\omega^{a,+}_j)_{j\in\N}$
knowing $L^a=t$ and $\frac{1}{2}\dot L^a=y$ is the law of the sequence $(\varpi_j)_{j\in\N}$,
as described in the statement. We emphasize that
this conditional distribution makes sense for {\it every} choice of $t>0$ and $y\in\R$. Later, when we
consider expressions of the form 
\begin{equation}
\label{condi-expec}
\E\Big[G\Big((\omega^{a,+}_j)_{j\in\N}\Big)\,\Big|\,L^a=t, \frac{1}{2}\dot L^a=y\Big],
\end{equation}
this will always mean that we integrate $G$ with respect to the conditional distribution described above. 

\section{Moment Bounds and Quadratic Variation}
\label{est-moment}

In this section, we use a representation due to Hong \cite{Hon} to derive certain estimates for
moments of the derivatives $\dot L^x$ introduced in the previous section. We consider the
super-Brownian motion $\mathbf{X}$ with $\mathbf{X}_0=\alpha\delta_0$, constructed as above, and write $M$
for the associated martingale measure (see \cite[Section II.5]{Per}. For every function
$\phi:\R\la \R$ of class $C^2$, 
$$M_t(\phi):=\mathbf{X}_t(\phi) - \mathbf{X}_0(\phi) -\int_0^t \mathbf{X}_s(\phi''/2)\,\dd s$$
is a (continuous) local martingale (with respect to the canonical filtration of $\mathbf{X}$) with quadratic variation
\begin{equation}
\label{quad-var-M}
\langle M(\phi),M(\phi)\rangle_t= 4\int_0^t \mathbf{X}_s(\phi^2)\,\dd s.
\end{equation}
There is a linear extension of the definition of the local martingale $M_t(\phi)$ to locally bounded Borel functions $\phi$ and 
\eqref{quad-var-M} remains valid (e.g., see Proposition~II.5.4 and Corollary~III.1.7 of \cite{Per}).

Let $\xi:=\inf\{t\geq 0:\mathbf{X}_t=0\}$ stand for the (a.s. finite) extinction time of $\mathbf{X}$ and let $x>0$. According to \cite[Proposition 2.2]{Hon}, we have a.s. 
for every $t\geq \xi$,
\begin{equation}
\label{repre-deri}
\dot L^x=-\alpha-M_t(\mathrm{sgn}(x-\cdot)),
\end{equation}
where $\mathrm{sgn}(x-\cdot)$ stands for the function $y\mapsto \mathbf{1}_{\{x>y\}}-\mathbf{1}_{\{x<y\}}$. With our convention 
for $\dot L^0$, this formula remains valid for $x=0$. 
We use this representation to derive the following lemma.

\begin{lemma}
\label{moment-deri}
{\rm (i)} For every $q\in [1,4/3)$, for every $x,y\in \R$,
$$\E[|\dot L^x-\dot L^y|^{q}]<\infty.$$
{\rm (ii)} Let $q\in [1,4/3)$. There exists a constant $\beta>0$ such that, for every $0<u<v$,
\begin{equation}
\label{bound-incre}
\E\Big[\sup_{x, y\in[u,v],x\neq y}\Big( \frac{ |\dot L^x -\dot L^y|}{|x-y|^\beta}\Big)^q\Big]<\infty.
\end{equation}
\end{lemma}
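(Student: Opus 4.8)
Both parts rest on Hong's stochastic integral representation \eqref{repre-deri}. When $0\le x<y$ one has $\mathrm{sgn}(y-z)-\mathrm{sgn}(x-z)=2\,\mathbf{1}_{(x,y)}(z)$ for Lebesgue‑a.e.\ $z$, and since $\mathbf X_s$ has a density for every $s>0$, \eqref{repre-deri} gives, a.s.,
$$\dot L^x-\dot L^y=2\,N^{x,y}_\infty,\qquad N^{x,y}_t:=M_t(\mathbf{1}_{(x,y)}),$$
where $N^{x,y}$ is a continuous local martingale with $\langle N^{x,y}\rangle_t=4\int_0^t\mathbf X_s((x,y))\,\dd s$ by \eqref{quad-var-M}; hence $N^{x,y}_\infty:=\lim_{t\to\infty}N^{x,y}_t$ exists and $\langle N^{x,y}\rangle_\infty=4\int_x^y L^a\,\dd a$. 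For general $x,y\in\R$ in part (i) I would telescope at $0$ and use the reflection symmetry $z\mapsto-z$ of $\mathbf X$ — which sends $(L^a,\dot L^a)_a$ to $(L^{-a},-\dot L^{-a})_a$, up to the deterministic relation \eqref{L0jump} at $0$ — to reduce to the case $0\le x<y$; part (ii) only involves $[u,v]\subset(0,\infty)$, so there is nothing to reduce there.

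For part (i), the Burkholder--Davis--Gundy inequality gives
$$\E\big[|\dot L^x-\dot L^y|^{q}\big]=2^q\,\E\big[|N^{x,y}_\infty|^q\big]\le C_q\,\E\big[\langle N^{x,y}\rangle_\infty^{q/2}\big]=C_q\,4^{q/2}\,\E\Big[\Big(\int_x^y L^a\,\dd a\Big)^{q/2}\Big],$$
and since $q/2<2/3$ and $\int_x^y L^a\,\dd a\le|x-y|\,\sup_{a\in\R}L^a$, it suffices to know that $\E\big[(\sup_{a\in\R}L^a)^p\big]<\infty$ for every $p<2/3$. This is a known feature of the occupation density of super‑Brownian motion (\cite{Sug}); from the snake viewpoint it reflects that $L^0$ is a value of a stable subordinator of index $2/3$ — one has $\E[e^{-\lambda L^0}]=\exp(-c\,\alpha\,\lambda^{2/3})$, using $\ell^0=\int_0^\infty\XX^0_r\,\dd r$ from \eqref{local3} and the $\varphi$‑CSBP structure of the exit‑measure process — so that $L^0$, and likewise the supremum, has finite moments of exactly the orders $<2/3$. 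This is precisely where the restriction $q<4/3=2\cdot\tfrac23$ comes from.

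For part (ii), the estimate above only yields $\E[|\dot L^x-\dot L^y|^q]\le C|x-y|^{q/2}$ with $q/2<1$ (and this power is sharp, by the reverse inequality), which is too weak for Kolmogorov's criterion; and one cannot increase $q$, since $\dot L^x-\dot L^y$ inherits the heavy tail of $L^0$. The plan is instead a dyadic chaining argument based on the Bernstein inequality for continuous local martingales, $\P\big(\sup_t|N^{x,y}_t|\ge\lambda,\ \langle N^{x,y}\rangle_\infty\le v\big)\le 2e^{-\lambda^2/(2v)}$, combined with the key observation that on the single event $\{\sup_{a\in[u,v]}L^a\le K\}$ one controls all the brackets at once: $\langle N^{x,y}\rangle_\infty=4\int_x^y L^a\,\dd a\le 4|x-y|K$ simultaneously for every $x,y\in[u,v]$. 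Fixing $\beta\in(0,\tfrac12)$ and letting $\Delta_m$ be the largest increment of $\dot L^{\cdot}$ over the $O(2^m)$ pairs of adjacent dyadic points of $[u,v]$ at scale $2^{-m}$, and using the continuity of $\dot L^{\cdot}$ on $[u,v]$ (from \cite{Sug}) to close up the chaining, one gets
$$\P\Big(\sup_{m\ge0}2^{m\beta}\Delta_m>\mu\Big)\ \le\ \P\big(\sup_{a\in[u,v]}L^a>K\big)\ +\ \sum_{m\ge0}C\,2^m\,\exp\!\Big(-\frac{\mu^2\,2^{m(1-2\beta)}}{C\,K}\Big).$$
Choosing $K=\mu^\theta$ with $\theta\in\big(\tfrac{3q}{2},2\big)$ — a nonempty range exactly because $q<\tfrac43$ — and invoking a tail bound $\P(\sup_{a\in[u,v]}L^a>K)\le C\,K^{-2/3}$, the right‑hand side is $\le C\mu^{-2\theta/3}+C'e^{-c\mu^{2-\theta}}$, which is integrable against $\mu^{q-1}\,\dd\mu$ precisely when $q<\tfrac{2\theta}{3}$. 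Hence $\E\big[(\sup_m 2^{m\beta}\Delta_m)^q\big]<\infty$, and the chaining turns this into $|\dot L^x-\dot L^y|\le H\,|x-y|^\beta$ on $[u,v]$ with $\E[H^q]<\infty$, i.e.\ \eqref{bound-incre}, with $\beta$ any fixed value in $(0,\tfrac12)$ (e.g.\ $\beta=\tfrac14$).

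The main obstacle is part (ii): the classical high‑moment route through Kolmogorov is blocked because $\dot L^x-\dot L^y$ has only finitely many moments, so one has to exploit the sub‑Gaussian concentration of $N^{x,y}$ given a bound on its bracket, together with the fact that one global good event bounds every bracket at once — the delicate point being the correct trade‑off between the level $K$ of that event and the concentration scale $\mu$. The remaining input to nail down is the tail estimate $\P(\sup_{a\in[u,v]}L^a>K)\le C\,K^{-2/3}$ (equivalently $\E[(\sup_{a\in[u,v]}L^a)^p]<\infty$ for $p<\tfrac23$), for which I would use the Brownian‑snake representation together with Lemma~\ref{maxN*} and the $\varphi$‑CSBP structure of the exit‑measure process, or cite the moment estimates for local times of super‑Brownian motion from \cite{Sug,Hon}.
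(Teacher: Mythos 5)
Your part (i) follows essentially the paper's route (Hong's representation \eqref{repre-deri}, the bracket identity from \eqref{quad-var-M}, BDG with exponent $q$, then a moment bound of order $<2/3$ on the occupation of an interval), except for the final input: you bound $\int_x^y L^a\,\dd a\le |x-y|\sup_a L^a$ and invoke $\E[(\sup_{a\in\R}L^a)^p]<\infty$ for all $p<2/3$, attributed to \cite{Sug}. That supremum bound is not in \cite{Sug} (nor in \cite{Hon}) in this form, and your only justification is the one-point computation for $L^0$ via \eqref{local3} ("and likewise the supremum"), which is not automatic: passing from a pointwise stable-$2/3$ tail to the same tail for the supremum of the occupation density requires a genuine maximal argument. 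It is also more than you need: the paper proves directly (see \eqref{deri-tec1}) that $\E[(\int_0^\infty \mathbf{X}_s([0,x])\,\dd s)^{q'}]<\infty$ for $q'<2/3$ by splitting on the extinction time $\xi$, using $\P(\xi>t)\le \alpha/(2t)$ and the first-moment formula $\E[\mathbf{X}_s([0,x])]\le \alpha x/\sqrt{2\pi s}$; substituting that elementary estimate repairs (i) completely (together with \eqref{L0jump} for the reduction to $x,y\ge 0$, as you indicate).

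For part (ii) your architecture (exponential inequality for a continuous local martingale given its bracket, one global event $\{\sup_{[u,v]}L\le K\}$ controlling all brackets simultaneously, dyadic chaining, and a trade-off $K=\mu^\theta$) is coherent and genuinely different from the paper's, but it stands or falls on the tail bound $\P(\sup_{a\in[u,v]}L^a>K)\le C K^{-2/3}$, which you explicitly leave as "the remaining input to nail down". This is the crux, not a detail: your own computation shows the admissible range of $q$ is $q<2\gamma$ where $\gamma$ is the tail exponent you can prove for $\sup_{[u,v]}L^a$, so reaching the full range $q<4/3$ requires exactly $\gamma=2/3$ with no slack; a cruder bound (e.g.\ via the total mass $\int_0^\infty\mathbf{X}_s(\R)\,\dd s$, which only has moments of order $<1/2$) would not suffice, and the references you point to do not contain the needed statement. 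Note how the paper sidesteps any maximal estimate on $L$: it works with the martingales $\wh L^x_t$ stopped at a deterministic time $t$, where the explicit second-moment formula gives $\E[\mathbf{X}_s([x,y])^2]\le C_\delta(y-x)^2$ for $x\ge\delta>0$ and hence a fourth-moment increment bound $C t^2(y-x)^2$; the heavy tail is then carried entirely by the extinction time through $\P(\xi>t)\le\alpha/(2t)$, and optimizing $t=\lambda^{4/3}$ against the chaining bound $\tilde C t^2\lambda^{-4}$ produces the $\lambda^{-4/3}$ tail, i.e.\ all moments $q<4/3$. Unless you supply a proof of the $K^{-2/3}$ tail for the supremum of the local time (which seems to demand an argument of comparable difficulty to the lemma itself), you should switch the source of heavy-tail control from $\sup_a L^a$ to the extinction time as in the paper; your sub-Gaussian/chaining machinery can then be kept, but it no longer buys anything over the paper's fourth-moment chaining.
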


\proof (i) We first verify that, for every $x>0$ and every $q\in(0,2/3)$,
\begin{equation}
\label{deri-tec1}
\E\Big[\Big(\int_0^\infty \mathbf{X}_s([0,x])\,\dd s\Big)^q\Big]<\infty.
\end{equation}
To see this, recall the well-known formula $\P(\xi>t)=1-\exp(-\frac{\alpha}{2t})$ (which is easily derived
from the representation of the preceding section), and write for every $\lambda>0$ and $r>0$,
\begin{align*}
\P\Bigg(\Big(\int_0^\infty \mathbf{X}_s([0,x])\,\dd s\Big)^q>\lambda\Bigg)
&\leq \P(\xi >\lambda^r)+\P\Big(\int_0^{\lambda^r} \mathbf{X}_s([0,x])\,\dd s>\lambda^{1/q}\Big)\\
&\leq \frac{\alpha}{2\lambda^r} + \frac{1}{\lambda^{1/q}} \int_0^{\lambda^r} \E[\mathbf{X}_s([0,x])]\,\dd s\\
&= \frac{\alpha}{2\lambda^r}+ \frac{\alpha}{\lambda^{1/q}} \int_0^{\lambda^r} \P(B_s\in[0,x])\,\dd s\\
&\leq \alpha\Big(\frac{1}{2}\,\lambda^{-r}+ x\,\lambda^{r/2-1/q}\Big),
\end{align*}
where we wrote $(B_t)_{t\geq 0}$ for a linear Brownian motion started at $0$, and  we
used the trivial bound $\P(B_s\in[0,x])\leq x/\sqrt{2\pi s}$. If we take $r=2/(3q)$, the 
right-hand side of the previous display becomes a constant, depending on $x$, times $\lambda^{-2/(3q)}$, which
is integrable in $\lambda$ with respect to Lebesgue measure on $[1,\infty)$ if $0<q<2/3$. Our claim 
\eqref{deri-tec1} follows. 

Next let $K>0$ and $0\leq x<y\leq K$. We observe that $M_t(\mathrm{sgn}(x-\cdot))-M_t(\mathrm{sgn}(y-\cdot))$
is a continuous local martingale with quadratic variation
$$4\int_0^t \mathbf{X}_s((\mathrm{sgn}(x-\cdot)-\mathrm{sgn}(y-\cdot))^2)\,\dd s= 16\int_0^t\mathbf{X}_s([x,y])\,\dd s.$$
From \eqref{deri-tec1} and the Burkholder-Davis-Gundy inequalities, we obtain that, for every $q\in[1,4/3)$,
$$\E\Big[ \Big|M_t(\mathrm{sgn}(x-\cdot))-M_t(\mathrm{sgn}(y-\cdot))\Big|^q\Big]\leq C_{(q,K)},$$
where the constant $C_{(q,K)}$ only depends on $K$ and $q$. Letting $t$ tend to infinity and 
using \eqref{repre-deri} together with Fatou's lemma, we get that $\E[|\dot L^x-\dot L^y|^{q}]\leq C_{(q,K)}$.
By symmetry, we have
for every $x>0$, $\E[|\dot L^{-x}-\dot L^{0-}|^{q}]=\E[|\dot L^x-\dot L^0|^{q}]<\infty$, and, by \eqref{L0jump}, $|L^0-L^{0-}|=2\alpha$. Assertion (i) follows.

\smallskip
\noindent (ii) We first observe that, for every $\delta>0$, there is a constant $C_\delta$ (depending on $\alpha$)  such that,
for every $\delta\leq x\leq y$ and every $s>0$,
\begin{equation}\label{L2int}\E[\mathbf{X}_s([x,y])^2]\leq C_\delta\,(y-x)^2.
\end{equation}
To see this first use the explicit formula 
$$\E[\mathbf{X}_s([x,y])^2]= \alpha^2\Bigg(\int_x^y q_s(u)\dd u \Bigg)^2 + 4\alpha\int_0^s \dd r\int_\R \dd u \,q_r(u)\Bigg(\int_x^y \dd v\, q_{s-r}(v-u)\Bigg)^2,$$
where $q_s(u)$ is the Brownian transition density (see e.g. Proposition II.11 in \cite{Zurich}). To handle 
 the second term of the right-hand side, bound $q_{s-r}(v-u)$ by $C/\sqrt{s}$ when $r<s/2$, and  when $r>s/2$
use $\int \dd u \,q_{s-r}(v-u)q_{s-r}(v'-u)=q_{2(s-r)}(v-v')$. The bound \eqref{L2int} now follows from a short calculation.

To simplify notation, set $\wh L^x_t=-\alpha-M_t(\mathrm{sgn}(x-\cdot))$. From the Burkholder-Davis-Gundy inequalities and the bound in \eqref{L2int}, we get the existence
of a constant $C$ such that, for every $\delta\leq x\leq y$,
$$\E[(\wh L^y_t-\wh L^x_t)^4]\leq C\,\E\Big[\Big(\int_0^t \mathbf{X}_s([x,y])\,\dd s\Big)^2\Big] \leq C\,t\,\E\Big[\int_0^t (\mathbf{X}_s([x,y]))^2\,\dd s\Big] \leq C\,C_\delta\,t^2(y-x)^2.$$
Let $a>0$ and $\lambda>0$. For every $n\in\N$, we can bound
$$\P\Bigg(\sup_{1\leq k\leq 2^n} |\wh L^{1+k2^{-n}}_t- \wh L^{1+(k-1)2^{-n}}_t| >\lambda a^n\Bigg)
\leq 2^n\times (\lambda a^n)^{-4} \times C\,C_1\,t^22^{-2n}=C\,C_1\,t^2\lambda^{-4}a^{-4n}2^{-n}.$$
We fix $a\in(0,1)$ such that $a^{-4}<2$.
Consider the event 
$$A:=\bigcup_{n\in\N} \Bigg\{\sup_{1\leq k\leq 2^n} |\wh L^{1+k2^{-n}}_t- \wh L^{1+(k-1)2^{-n}}_t| >\lambda a^n\Bigg\}.$$
We get 
$\P(A)\leq \wt C\,t^2\lambda^{-4}$,
where $\wt C$ is a constant. Let $D$ be the set of all real numbers of the form $1+k2^{-n}$ with $n\in\N$ and $k\in\{0,1,\ldots, 2^n\}$ On the complement of the set $A$, simple chaining arguments show that we have $|\wh L^x_t-\wh L^y_t|\leq K\,\lambda\,|x-y|^\beta$ for every $x,y\in D$, where $\beta=-\log a/\log 2>0$ and $K$
is a constant (which does not depend on $\lambda$). Finally, since $\dot L^y -\dot L^x=\wh L^y_t-\wh L^x_t$ on $\{\xi\leq t\}$, we have 
$$\P\Bigg(\sup_{x,y\in[1,2],x\neq y} \frac{ |\dot L^x -\dot L^y|}{|x-y|^\beta}>K\,\lambda\Bigg)
=\P\Bigg(\sup_{x,y\in D,x\neq y} \frac{ |\dot L^x -\dot L^y|}{|x-y|^\beta} >K\,\lambda\Bigg)\leq \P(\xi>t) + \wt C\,t^2\lambda^{-4}\leq\frac{\alpha}{2t} + \wt C\,t^2\lambda^{-4}.$$
We apply this bound with $t=\lambda^{4/3}$, and it follows that 
$$\E\Big[\Big(\sup_{x,y\in[1,2],x\neq y}\Big( \frac{ |\dot L^x -\dot L^y|}{|x-y|^\beta}\Big)^q\Big]<\infty
$$
for every $q\in[1,4/3)$. By a minor modification of the argument, the last display still holds if we
replace $[1,2]$ by any interval $[u,v]$ with $0<u<v$. \endproof

The following proposition determines 
the quadratic variation of $(\dot L^x)_{x\geq 0}$. We will see later that this process is a semimartingale (for an appropriate filtration). 

\begin{proposition}
\label{quad-var}
Let $\ov x>0$, and, for every integer $n\in\N$, let $\pi_n=\{0=x^n_0<x^n_1<\cdots< x^n_{m_n}=\ov x\}$
be a subdivision of $[0,\ov x]$. Set $\|\pi_n\|:=\max\{x^n_i-x^n_{i-1}:1\leq i\leq m_n\}$, and 
$$Q(\pi_n)= \sum_{i=1}^{m_n} (\dot L^{x^n_i}-\dot L^{x^n_{i-1}})^2.$$
Assume that $\|\pi_n\| \la 0$ as $n\to\infty$.
Then,
$$Q(\pi_n)\build{\la}_{n\to\infty}^{} 16 \int_0^{\ov x} L^x\,\dd x\quad\hbox{in probability}.$$
\end{proposition}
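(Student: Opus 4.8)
The plan is to use the Hong representation \eqref{repre-deri} to express the increments $\dot L^{x^n_i}-\dot L^{x^n_{i-1}}$ as increments of the martingale $t\mapsto -M_t(\mathrm{sgn}(x^n_i-\cdot))$ evaluated at any time $t\geq\xi$, and then reduce the claimed convergence to the standard fact that sums of squared increments of a continuous $L^2$-martingale converge to its quadratic variation along any sequence of subdivisions with mesh tending to zero. Concretely, fix $t>0$ large and work first on the event $\{\xi\le t\}$; there $\dot L^{x^n_i}-\dot L^{x^n_{i-1}}=\wh L^{x^n_i}_t-\wh L^{x^n_{i-1}}_t$ with $\wh L^x_t=-\alpha-M_t(\mathrm{sgn}(x-\cdot))$. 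The process $x\mapsto \wh L^x_t$ is, as $x$ ranges over $[0,\ov x]$, a continuous martingale in the ``space'' parameter $x$ with respect to the filtration $\g_x:=\sigma(M_t(\mathrm{sgn}(z-\cdot)):z\le x)$: indeed by \eqref{quad-var-M} the difference $M_t(\mathrm{sgn}(y-\cdot))-M_t(\mathrm{sgn}(x-\cdot))=-M_t(\mathbf{1}_{[x,y]}\cdot 2)$ for $x<y$, and using $M_t(\mathbf{1}_{[x,z]})-M_t(\mathbf{1}_{[x,y]})=M_t(\mathbf{1}_{[y,z]})$ together with the orthogonality of $M_t$ applied to functions with disjoint supports (from \eqref{quad-var-M}, $\langle M(\mathbf{1}_A),M(\mathbf{1}_B)\rangle_t=4\int_0^t\mathbf{X}_s(\mathbf{1}_{A\cap B})\,\dd s$), one checks the increments are orthogonal and mean zero given the past, i.e. $x\mapsto \wh L^x_t$ is an $(\g_x)$-martingale. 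Its quadratic variation is $\langle \wh L_t\rangle_x=16\int_0^t\mathbf{X}_s([0,x])\,\dd s$, which by Fubini equals $16\int_0^x\big(\int_0^t\mathbf{X}_s(\{z\})\,\dd z\big)$... more precisely $16\int_0^t\mathbf{X}_s([0,x])\,\dd s=16\int_0^x \big(\int_0^t\dd s\,\mathbf{X}_s(\dd z)\big)$ has density $16\,\mathbf{Y}^{[0,t]}$ in $x$; letting $t\ge\xi$ this density is $16L^x$, so $\langle \wh L_t\rangle_x=16\int_0^x L^z\,\dd z$ for $x\le\ov x$ on $\{\xi\le t\}$.

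Granting this, the core step is: for a continuous martingale $(N_x)_{0\le x\le\ov x}$ in $L^2$ with continuous quadratic variation $\langle N\rangle$, one has $\sum_i (N_{x^n_i}-N_{x^n_{i-1}})^2\to \langle N\rangle_{\ov x}$ in probability whenever $\|\pi_n\|\to 0$; this is classical (e.g.\ via the identity $N_{\ov x}^2-N_0^2=2\int N\,\dd N+\langle N\rangle_{\ov x}$ and convergence of the discrete stochastic integral, or by an $L^2$ computation using a uniform bound on $N$ and uniform continuity of its paths). Applying it to $N_x=\wh L^x_t$ gives $Q(\pi_n)\to 16\int_0^{\ov x}L^z\,\dd z$ in probability on the event $\{\xi\le t\}$. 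Since $\P(\xi>t)=1-\exp(-\alpha/2t)\to 0$ as $t\to\infty$, a standard $\varepsilon$–$\delta$ argument (first choose $t$ so that $\P(\xi>t)$ is small, then let $n\to\infty$) upgrades this to convergence in probability on the whole space, which is exactly the assertion.

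The main technical point to get right — and the step I expect to be the real obstacle — is the justification that $x\mapsto\wh L^x_t$ is genuinely a continuous $L^2$-martingale in the space variable with the stated quadratic variation, since $M_t(\cdot)$ is a priori only defined as a martingale in the time variable $t$, not in $x$. One must either invoke the structure of the martingale measure $M$ directly (the orthogonal-martingale-measure formalism of \cite{Per} gives $\langle M(\mathbf 1_A),M(\mathbf 1_B)\rangle_t=4\int_0^t\mathbf{X}_s(A\cap B)\,\dd s$, from which the orthogonality of space-increments and the martingale property in $x$ follow once we fix $t\ge\xi$), or build $\wh L^x_t$ as a stochastic integral against $M$ with integrand $(s,z)\mapsto 2\,\mathbf{1}_{[0,x]}(z)$ and read off its quadratic variation $16\int_0^t\mathbf X_s([0,x])\,\dd s$. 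The $L^2$ bounds needed for the martingale-convergence step are exactly those already established in Lemma \ref{moment-deri} (finiteness of second moments of increments) and in \eqref{L2int}, and the continuity in $x$ is the content of Lemma \ref{moment-deri}(ii); so once the space-martingale structure is in place, the remainder is routine. A minor additional care is needed near $x=0$, where $\mathbf X_s([0,x])$ is small but $L^x$ is continuous, so no singularity arises and the identification of $\langle\wh L_t\rangle_x$ with $16\int_0^xL^z\,\dd z$ extends continuously down to $x=0$.
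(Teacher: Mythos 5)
You have put your finger on the right pressure point yourself, but the fix you propose does not exist: the claim that $x\mapsto \wh L^x_t=-\alpha-M_t(\mathrm{sgn}(x-\cdot))$ is a \emph{martingale in the space variable} with respect to $\g_x=\sigma(M_t(\mathrm{sgn}(z-\cdot)):z\le x)$ is false, and neither the orthogonal-martingale-measure formalism nor writing $\wh L^x_t$ as a stochastic integral with integrand $2\,\mathbf{1}_{[0,x]}$ can deliver it. Orthogonality, i.e. $\langle M(\mathbf{1}_A),M(\mathbf{1}_B)\rangle_t=4\int_0^t\mathbf{X}_s(A\cap B)\,\dd s=0$ for disjoint $A,B$, only makes the spatial increments \emph{uncorrelated}; it does not make them conditionally centered given the past in space. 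Indeed, for $A\subset[0,x]$ and $B=[x,y]$ one computes, using $M_t(\mathbf{1}_A)^2=2\int_0^t M_s(\mathbf{1}_A)\,\dd M_s(\mathbf{1}_A)+4\int_0^t\mathbf{X}_s(A)\,\dd s$ and then test functions of the form $P_{t-r}\phi$ (heat semigroup), that $\E\big[M_t(\mathbf{1}_B)\,M_t(\mathbf{1}_A)^2\big]=4\int_0^t\E[M_s(\mathbf{1}_B)\mathbf{X}_s(A)]\,\dd s$ with $\E[M_s(\mathbf{1}_B)\mathbf{X}_s(A)]=4\int_0^s\E[\mathbf{X}_r(\mathbf{1}_B\,P_{s-r}\mathbf{1}_A)]\,\dd r>0$, because the semigroup spreads mass from $A$ into $B$. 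Since $M_t(\mathbf{1}_A)^2$ is $\g_x$-measurable, this shows $\E[\wh L^y_t-\wh L^x_t\mid\g_x]\not\equiv 0$. This is also consistent with the paper's own results: Proposition \ref{increment-derivative} and the SDE \eqref{main-SDE} show that $\dot L^x$ has a genuinely nonzero drift in $x$, so no filtration containing $\sigma((L^r,\dot L^r)_{r\le x})$ can turn it into a martingale, and on $\{\xi\le t\}$ your $\wh L^x_t$ is exactly $\dot L^x$. Appealing instead to the ``classical'' convergence of sums of squared increments for a \emph{semimartingale} would require knowing in advance the semimartingale decomposition of $\dot L^x$ in $x$ — but that decomposition is precisely what Proposition \ref{quad-var} is being used to establish, so the argument would be circular.

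The proof in the paper sidesteps any spatial filtration. For a fixed subdivision it sets $M^i_t:=-2M_t(\mathbf{1}_{[x_{i-1},x_i]})$, each a local martingale in the \emph{time} variable with $\langle M^i,M^i\rangle_t=16\int_0^t\mathbf{X}_s([x_{i-1},x_i])\,\dd s$, writes $Q_t(\pi)-16\int_0^t\mathbf{X}_s([0,\ov x])\,\dd s=\sum_i N^i_t$ with $N^i_t=(M^i_t)^2-\langle M^i,M^i\rangle_t=2\int_0^t M^i_s\,\dd M^i_s$, and does a direct second-moment computation: the cross terms $\E[N^i_tN^j_t]$ vanish because $\langle M^i,M^j\rangle\equiv 0$ (this is a statement about stochastic integrals in $t$, which is exactly what orthogonality does give you), while the diagonal terms are bounded by $C\,t^{5/2}(x_i-x_{i-1})\,\E[\cdots]$ via Burkholder--Davis--Gundy, the Markov property and first/second moment formulas for $\mathbf{X}$, yielding an overall bound $Ct^{5/2}\|\pi\|$. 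Your final step — first fixing $t$ and then using $\P(\xi>t)\to 0$ — coincides with the paper's conclusion and is fine; the missing ingredient is a correct substitute for the space-martingale property, and the paper's $L^2$ computation with uncorrelated (not conditionally centered) pieces is that substitute.
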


\proof
We use the same notation 
$\wh L^x_t=-\alpha-M_t(\mathrm{sgn}(x-\cdot))$, for $x\geq 0$ and $t\geq 0$, 
as in the previous proof, and we recall that
$\dot L^x=\wh L^x_t$ when $t\geq \xi$, by \eqref{repre-deri}. If $0\leq x\leq y$, we have
$$\wh L^y_t -\wh L^x_t= -2\,M_t(\mathbf{1}_{[x,y]}).$$
Fix a subdivision $\pi=\{0=x_0<x_1<\cdots< x_{m}=\ov x\}$ of $[0,\ov x]$. We will use the
last display to evaluate 
$$Q_t(\pi):=\sum_{i=1}^m (\wh L^{x_i}_t-\wh L^{x_{i-1}}_t)^2.$$
For every $i\in\{1,\ldots,m\}$, set
$$M^i_t:=-2\,M_t(\mathbf{1}_{[x_{i-1},x_i]})$$
so that $M^i$ is a local martingale with quadratic variation
$$\langle M^i,M^i\rangle_t= 16 \int_0^t\,\mathbf{X}_s([x_{i-1},x_i])\,\dd s.$$
Also set
$$N^i_t:=(M^i_t)^2-\langle M^i,M^i\rangle_t = 2\int_0^t M^i_s\,\dd M^i_s.$$
Then,
 \begin{align}
 \label{QV-tech1}
\E\Big[\Big(Q_t(\pi)-16 \int_0^t\,\mathbf{X}_s([0,\ov x])\,\dd s\Big)^2\Big]
& = \E\Big[\Big(\sum_{i=1}^m \big((M^i_t)^2 - \langle M^i,M^i\rangle_t\big)\Big)^2\Big]\nonumber\\
 &= \E\Big[\sum_{i=1}^m (N^i_t)^2\Big] + 2\sum_{1\leq i<j\leq m} \E[N^i_tN^j_t].
 \end{align}
 On one hand, we have $\E[N^i_tN^j_t]=0$ if $i\not =j$, because 
 $$\langle M^i,M^j\rangle_t=16\int_0^t \mathbf{X}_s([x_{i-1},x_i]\cap[x_{j-1},x_j])\,\dd s = 0$$
 and $N^i_t$ is a stochastic integral with respect to $M^i$ Note that integrability issues are
 trivial here because the random variables $\mathbf{X}_s(\R)$, $0\leq s\leq t$, are uniformly
 bounded in $L^p$, for any $p<\infty$ (e.g., see Lemma III.3.6 of \cite{Per}). On the other hand, we
 can estimate $\E[(N^i_t)^2]$ as follows. Using the Burkholder-Davis-Gundy inequalities and
 writing $C_1$  and $C_2$ for the appropriate constants, we have
 \begin{align*}
 \E[(N^i_t)^2]&\leq 2\Big( \E[(M^i_t)^4]+\E[(\langle M^i,M^i\rangle_t )^2]\Big)\\
 &\leq C_1\,\E[(\langle M^i,M^i\rangle_t )^2]\\
 &=C_2 \,\E\Big[\int_0^t \dd s\int_s^t \dd r\,\mathbf{X}_s([x_{i-1},x_i])\mathbf{X}_r([x_{i-1},x_i])\Big]\\
 &=C_2\,\int_0^t \dd s\int_s^t \dd r\,\E\Big[\mathbf{X}_s([x_{i-1},x_i])\,\E_{\mathbf{X}_s}[\mathbf{X}_{r-s}([x_{i-1},x_i])]\Big]\\
 &\leq C_2\,\int_0^t \dd s\int_s^t \dd r\,\frac{x_i-x_{i-1}}{2\sqrt{r-s}}\, \E\Big[\mathbf{X}_s([x_{i-1},x_i])\mathbf{X}_s(\R)\Big]\\
 &\leq C_2\,(x_i-x_{i-1})\,\sqrt{t}  \int_0^t \dd s \,\E\Big[\mathbf{X}_s([x_{i-1},x_i])\mathbf{X}_s(\R)\Big].
 \end{align*}
 In the fourth line of this calculation, we applied the Markov property of $\mathbf{X}$, writing $\P_\mu$
 for a probability measure under which $\mathbf{X}$ starts from $\mu$, and, in the next line, we used the
 first-moment formula for $\mathbf{X}$.
 By summing the estimate of the last display over $i\in\{1,\ldots,m\}$, we get
 $$\E\Big[\sum_{i=1}^m (N^i_t)^2\Big]\leq C_2\,\|\pi\| \sqrt{t}\,\int_0^t \E[\mathbf{X}_s(\R)^2]\,\dd s\leq C_2\,\|\pi\| \sqrt{t}\,(\alpha^2t+2\alpha t^2),$$
 using the simple estimate $\E[\mathbf{X}_s(\R)^2]\leq \alpha^2 +4\alpha s$. Finally, we deduce from \eqref{QV-tech1} that, for $t\geq 1$,
 $$\E\Big[\Big(Q_t(\pi)-16 \int_0^t\,\mathbf{X}_s([0,\ov x])\,\dd s\Big)^2\Big]\leq C_3\,t^{5/2}\,\|\pi\|.$$
 We apply the latter estimate to $\pi=\pi_n$ for every $n\geq 1$, and it follows that, for every $t\geq 1$,
 $$\lim_{n\to\infty}\E\Big[\Big(Q_t(\pi_n)-16 \int_0^t\,\mathbf{X}_s([0,\ov x])\,\dd s\Big)^2\Big]=0.$$
 Since
 $$\P\Big(Q_t(\pi_n)=Q(\pi_n),\int_0^t\,\mathbf{X}_s([0,\ov x])\,\dd s=\int_0^\infty\mathbf{X}_s([0,\ov x])\,\dd s\Big) \geq \P(\xi\leq t) \build\la_{t\to\infty}^{} 1,$$
 this immediately gives the convergence in probability
 $$Q(\pi_n)\build\la_{n\to\infty}^{} 16 \int_0^\infty\,\mathbf{X}_s([0,\ov x])\,\dd s = 16\int_0^{\ov x} L^x\,\dd x.\eqno{\square}$$

\section{The expected value of increments of the derivative of local time}
\label{sec:exp-incre-deriv}

\subsection{The case of the positive excursion measure}
\label{sec:casepositive}
	
Our goal in this section is to compute the quantities $\N^{*,z}(\dot\ell^a)$
for $z>0$ and $a>0$. We start with a technical estimate.

\begin{lemma}
\label{lem-tech}
Let $q\in[1,4/3)$. Then, for
every $0<u<v$, and $n\in\N$,
$$ \sup_{1/n\le z\le n}\N^{*,z}_0\Big(\Big(\sup_{u\leq x\leq v}|\dot \ell^x|\Big)^q\Big) <\infty.$$
\end{lemma}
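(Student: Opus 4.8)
The plan is to reduce the bound under $\N^{*,z}_0$ to a bound under $\N_b$ for the truncated snake, then transfer the moment estimate of Lemma~\ref{moment-deri} (which lives on the super-Brownian side) via the decomposition of the occupation measure into excursions away from $0$. First I would use the scaling relation (iii) in \eqref{N*props}, namely $\N^{*,z}_0=\theta_{z/z'}(\N^{*,z'}_0)$, together with the explicit action of $\theta_\lambda$ on lifetimes and labels in \eqref{scalereln}, to see how $\dot\ell^x$ transforms under $\theta_\lambda$: since $\ell$ is the density of the occupation measure, $\theta_\lambda$ multiplies $\sigma$ by $\lambda^2$, rescales space by $\sqrt\lambda$, so $\ell^x\mapsto \lambda^{3/2}\ell^{x/\sqrt\lambda}$ and $\dot\ell^x\mapsto \lambda\,\dot\ell^{x/\sqrt\lambda}$ (up to the precise constant, which I would pin down). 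This shows $\N^{*,z}_0\bigl((\sup_{u\le x\le v}|\dot\ell^x|)^q\bigr) = z^{q}\,\N^{*,1}_0\bigl((\sup_{u/\sqrt z\le x\le v\sqrt z}|\dot\ell^x|)^q\bigr)$ after adjusting constants, and since $z$ ranges over the compact set $[1/n,n]$, the sup over $z$ reduces to a single bound of the form $\N^{*,1}_0\bigl((\sup_{u'\le x\le v'}|\dot\ell^x|)^q\bigr)<\infty$ for a slightly enlarged interval $[u',v']\subset(0,\infty)$.

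Next I would pass from $\N^{*,1}_0$ (equivalently $\N^*_0$, which is a $\sigma$-finite mixture of the $\N^{*,z}_0$ with finite mass on $\{W^*>\delta\}$ by Lemma~\ref{maxN*}) to $\N_b$ via the re-rooting identity used already in the proof that $\mathcal Y$ has a continuously differentiable density: it suffices to bound $\N_b\bigl((\sup_{u'\le x\le v'}|\dot\ell^x(\tr_0(\omega))|)^q\bigr)$. Now apply the decomposition \eqref{decompo}: $\ell^x(\tr_0\omega)=\ell^x(\omega)-\sum_{j\in J}\ell^x(\omega^j)$ on $(0,\infty)$, hence $\dot\ell^x(\tr_0\omega)=\dot\ell^x(\omega)-\sum_{j\in J}\dot\ell^x(\omega^j)$, where the $\omega^j$ live in $\S_0$ and thus contribute nothing on $(0,\infty)$ after truncation... wait — rather, the cleaner route is to use that conditionally on $\z_0$, the $\omega^j$ are a Poisson collection with intensity $\z_0\N_0$, so the excursions below $0$ and above $0$ separate, and $\dot\ell^x(\tr_0\omega)$ for $x>0$ picks up only the excursions above $0$. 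In either case the key point is that a finite-moment bound for $\sup_{u'\le x\le v'}|\dot\ell^x|$ under $\N_b$ follows from Lemma~\ref{moment-deri}(ii) applied to the super-Brownian motion $\mathbf X$ built from a Poisson collection of snakes with intensity $\alpha\N_0$: the modulus-of-continuity estimate \eqref{bound-incre} is uniform over compact subintervals $[u,v]\subset(0,\infty)$, and combining it with $\E[|\dot L^{u'}|^q]<\infty$ (part (i)) gives $\E[(\sup_{u'\le x\le v'}|\dot L^x|)^q]<\infty$; un-projecting from $\mathbf X$ to a single snake $\N_0$ is then routine since $\mathbf X$ is a Poisson superposition and $\dot L^x=\sum_k \dot\ell^x(\omega_k)$ with only finitely many nonzero terms for fixed $x$.

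The main obstacle I anticipate is the transfer step between the two "worlds": the estimate \eqref{bound-incre} is phrased for $\dot L^x$ (super-Brownian motion, a probability setting), whereas we need it for $\dot\ell^x$ under the $\sigma$-finite measures $\N_0$ or $\N_b$ on a single snake trajectory. One must either (a) disintegrate the Poisson superposition — writing $\N_0(\,\cdot\,) = $ a suitable limit of $\P$-expectations restricted to the event that exactly one snake exceeds level $u'$, using that $\N_0(W^*>u')<\infty$ — or (b) redo the Hong-representation argument of Lemma~\ref{moment-deri} directly under $\N_0$, using the exit-measure process $(\mathcal X^y_r)$ and \eqref{local3} in place of the martingale measure $M$. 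Route (a) is lighter and I would pursue it: condition on the snake $\omega_k$ with $W^*(\omega_k)>u'$ having the largest boundary size, note it is distributed (after appropriate conditioning) in a way comparable to $\N_0(\,\cdot\mid W^*>u')$, and extract the one-snake bound from the finite super-process bound. Keeping track of the finitely-many-excursions bookkeeping and the re-rooting measurability is where the care is needed, but no genuinely new estimate beyond Lemma~\ref{moment-deri} is required.
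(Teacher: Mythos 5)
The approach you propose is genuinely different from the paper's, and it has a real gap at its center. The paper does \emph{not} pass through re-rooting or through the Poisson atoms $\omega_k$. Instead it constructs, for the super-Brownian motion $\mathbf X$ with $\mathbf X_0=\alpha\delta_0$, an event $B$ of positive probability on which the total occupation density $L^a$ for $a\notin(-1,u)$ coincides with $\ell^a(\omega_0)$ for a \emph{single} excursion $\omega_0$ of $\mathbf X$ above or below $0$. The key tools are the Lamperti transformation of the exit-measure process $(X^0_t)$ into the L\'evy process $\mathcal U$, the one-to-one correspondence between excursions and jumps of $\mathcal U$, and \eqref{indtexc}, which says that conditionally the excursion $\omega_0$ has law $\tfrac12(\N^{*,\Delta_0}_0+\check\N^{*,\Delta_0}_0)$. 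Because the conditional law of $\omega_0$ given $\Delta_0=z$ \emph{is} $\tfrac12(\N^{*,z}_0+\check\N^{*,z}_0)$, the finiteness of $\E\big[\mathbf 1_B(\sup_{[u,v]}|\dot L^x-\dot L^{-1}|)^q\big]$ from Lemma \ref{moment-deri} passes directly to $\N^{*,z}_0$ for a.e.\ $z>0$, and scaling via \eqref{N*props}(iii) then gives every $z$ with uniform control on compacts. This conditions on an excursion whose law is exactly the target $\N^{*,z}_0$; no change of measure or re-rooting is needed.

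Your route has two substantive problems. First, the re-rooting identity of \cite{ALG} (or \cite{Mar}) is an averaging-over-the-root identity: it expresses an integral of the form $\N^*_0\big(\int_0^\sigma G(\text{re-rooted }\omega)\,\dd s\big)$ in terms of $\N_b$. To turn this into a bound on $\N^*_0(F)$ for a functional $F$ you would have to write $F(\omega)=\sigma(\omega)^{-1}\int_0^\sigma G\,\dd s$ and then control the extra factor $\sigma^{-1}$, which is unbounded. The paper uses re-rooting only to establish an a.e.\ regularity statement (in the proof of the lemma just before this one), where the Radon--Nikodym factor is irrelevant; it cannot be used as freely for $L^q$ estimates. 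Second, in your Route (a) you condition on a single Poisson atom $\omega_k$, whose conditional law is a conditioned version of $\N_0$, not of $\N^{*,z}_0$ --- and you then speak of its ``boundary size,'' but the Poisson atoms $\omega_k$ in \eqref{occu-super} are full Brownian snake excursions, not the excursions above/below a level, and they carry no boundary size. To land on $\N^{*,z}_0$ you would still need to apply Theorem \ref{theo-excursion} inside the chosen atom, plus re-rooting; each of these steps carries the difficulties above. Route (b) is explicitly ruled out in the introduction: the paper remarks that $\N^{*,z}_0$ ``do not seem to provide a tractable setting for a direct derivation of the required bounds.'' Finally, a smaller point: if you scale first to reduce to $\N^{*,1}_0$, the mixture identity in \eqref{N*props}(i) does not hand you $\N^{*,1}_0$ back from $\N^*_0$ directly; the paper's order is to first get finiteness for a.e.\ $z$ (from the event construction), and only then scale to all $z$. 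I recommend reading the paper's conditioning-on-a-single-excursion argument via the Lamperti transformation and the jump decomposition of $\mathcal U$; it is the device that makes the transfer one clean step.
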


\proof We will derive this result from Lemma \ref{moment-deri}, using the 
construction of  the super-Brownian motion $(\mathbf{X}_t)_{t\geq 0}$
 in Section \ref{sec:super}. Recall the definition 
of the exit measure process $(X^0_t)_{t\geq 0}$ in \eqref{exitmdef} and that it is a $\varphi$-CSBP, where $\varphi=2\psi$.
By the Lamperti transformation \cite{Lam}, we can write $X^0$ as a (continuous) time change of a L\'evy process with no negative jumps and Laplace exponent $\varphi$, started at $\alpha$, up to its first hitting time of $0$. Up to enlarging the probability space, we may assume that this L\'evy process 
$(\mathcal{U}_t)_{t\geq 0}$ is defined for all $t\geq 0$ and we write $T_0=\inf\{t\geq 0:\mathcal{U}_t=0\}$. Notice that the jumps 
of $X^0$ are exactly the jumps of $\mathcal{U}$ on the time interval $[0,T_0]$.

Let us fix $0<u<v$.
Let $b>0$, and let $\mathcal{U}^{(1)}$ be the L\'evy process that only records the jumps of $\mathcal{U}$ of size greater than $b$,
$$\mathcal{U}^{(1)}_t := \sum_{s\leq t} \Delta \mathcal{U}_s\,\mathbf{1}_{\{\Delta \mathcal{U}_s >b\}}.$$
Also set $\mathcal{U}^{(0)}_t:=\mathcal{U}_t-\mathcal{U}^{(1)}_t$, so that $\mathcal{U}^{(0)}$ and $\mathcal{U}^{(1)}$ are two independent L\'evy 
processes, with $\mathcal{U}^{(1)}_0=0$ and $\mathcal{U}^{(0)}_0=\alpha$. We can find a constant $t_1>0$ such that
the probability of the event $A$ where $\mathcal{U}^{(1)}$ has exactly one jump during $[0,t_1]$ and $\mathcal{U}^{(0)}$ does not hit $0$ before $t_1$ is positive. On the event $A$,
let $\Delta_0$ be the unique jump of $\mathcal{U}^{(1)}$ on the time interval $[0,t_1]$. Then, conditionally on the event $A$,
$\Delta_0$ is distributed according to the probability measure $(3b^{3/2}/2)\,\mathbf{1}_{(b,\infty)}(z)z^{-5/2}\,\dd z$.
On the event $A$, let $\omega_0$ be the excursion of $\mathbf{X}$ (above or below $0$)  associated with the jump $\Delta_0$.  Here, recall the definition of these excursions
in Section \ref{sec:super}, and the fact that they are in one-to-one correspondence with the jumps of $X^0$, or equivalently
the jumps of $\mathcal{U}$ on $[0,T_0]$ (see especially \eqref{indtexc} and the discussion prior to it). Also let $A'$ be the event where
all excursions of $\mathbf{X}$ above or below $0$, except possibly the excursion $\omega_0$ (if it is defined), stay in the interval $(-1,u)$. On the event $B=A
\cap A'$, we have $L^a=\ell^a(\omega_0)$ for every $a\notin (-1,u)$. Then, on one hand, it follows from Lemma \ref{moment-deri} that
\begin{equation}
\label{estim1}
\E\Big[\mathbf{1}_{B}\,\Big(\sup_{x\in[u,v]} |\dot L^x-\dot L^{-1}|\Big)^q\Big]<\infty.
\end{equation}
On the other hand, the preceding remarks give
\begin{align}
\label{estim2}
&\E\Big[\mathbf{1}_{B}\,\Big(\sup_{x\in[u,v]} |\dot L^x-\dot L^{-1}|\Big)^q\Big]\nonumber\\
&=\E\Big[\mathbf{1}_{B}\,\Big(\sup_{x\in[u,v]} |\dot\ell^x(\omega_0)-\dot\ell^{-1}(\omega_0)|\Big)^q\Big]\nonumber\\
&=\E\Big[\mathbf{1}_{A}\,\P(A'\mid (\mathcal{U}_t)_{0\leq t\leq T_0})\times  \E\Big[\mathbf{1}_{A}\Big(\sup_{x\in[u,v]} |\dot \ell^x(\omega_0)-\dot \ell^{-1}(\omega_0)|\Big)^q \,\Big|\,
(\mathcal{U}_t)_{0\leq t\leq T_0}\Big]\Big]
\end{align}
where we use the conditional independence of the excursions of $\mathbf{X}$ given $(X^0_t)_{t\geq 0}$ (equivalently, given $(\mathcal{U}_t)_{0\leq t\leq T_0}$) from \eqref{indtexc}.
From Lemma \ref{maxN*}, one easily verifies that
$$\P(A'\mid (\mathcal{U}_t)_{0\leq t\leq T_0})>0\quad\hbox{a.s.}$$
Furthermore by \eqref{indtexc}, 
$$
 \E\Big[\mathbf{1}_{A}\Big(\sup_{x\in[u,v]} |\dot\ell^x(\omega_0)-\dot \ell^{-1}(\omega_0)|\Big)^q \,\Big|\,(\mathcal{U}_t)_{0\leq t\leq T_0}\Big]
=\mathbf{1}_{A}\Bigg(\frac{1}{2} \N^{*,\Delta_0}\Big(\Big(\sup_{x\in[u,v]} |\dot\ell^x|\Big)^q \Big)
+\frac{1}{2} \check\N^{*,\Delta_0}(|\dot\ell^{-1}(\omega_0)|^q)\Bigg),
$$
and, from \eqref{estim1} and \eqref{estim2}, it follows that
$$\mathbf{1}_{A} \N^{*,\Delta_0}\Big(\Big(\sup_{x\in[u,v]} |\dot\ell^x|\Big)^q \Big)<\infty\quad \hbox{a.s.}$$
Using the conditional distribution of $\Delta_0$ given $A$, we conclude that
$$\N^{*,z}_0\Big(\Big(\sup_{x\in[u,v]} |\dot\ell^x|\Big)^q\Big)<\infty,\hbox{ for a.e. }z>0,$$

We have thus proved that, for a.e. $z>0$,
$$\N^{*,z}_0\Big(\Big(\sup_{x\in[u,v]} |\dot\ell^x|\Big)^q\Big)<\infty, \hbox{ for every } 0<u<v.$$
However, if the last display holds for one value of $z>0$, the scaling in \eqref{N*props}(iii) shows that it must hold for every $z>0$ and 
in fact has a uniform bound for $z\in[1/n,n]$. \endproof

Thanks to the above, the quantity $\N^{*,z}(\dot\ell^a)$ is well defined for every $a>0$ and $z>0$. It can in fact 
be computed explicitly.

\begin{proposition}
\label{expected-derivative}
For every $z>0$ and $a>0$, we have
\begin{equation}
\label{moment1}
\N^{*,z}_0(\ell^a)=  \sqrt{6\pi}\,a^{-2}\,z^{5/2}\,\chi(\frac{3z}{2a^2})
\end{equation}
where, for every $x>0$,
$$\chi(x)=\frac{2}{\sqrt{\pi}} (x^{3/2}+x^{1/2}) -2 x(x+\frac{3}{2})\,e^x\,\mathrm{erfc}(\sqrt{x}),$$
with the notation $\mathrm{erfc}(y)=\frac{2}{\sqrt{\pi}}\int_y^\infty e^{-x^2}\dd x$. Moreover,
for every $z>0$ and $a>0$,
\begin{equation}
\label{moment2}
\N^{*,z}_0(\dot \ell^a)= z\,\gamma\Big(\frac{3z}{2 a^2}\Big)
\end{equation}
where, for every $u>0$,
$$\gamma(u)=-\frac{8}{3}\sqrt{\pi}\,u^{3/2}\,\Big(\chi(u)+u\chi'(u)\Big).$$
\end{proposition}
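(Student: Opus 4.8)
The plan is to deduce \eqref{moment2} from \eqref{moment1} by differentiating in the space variable. Since $a\mapsto\ell^a$ is continuously differentiable on $(0,\infty)$ with derivative $\dot\ell^a$, for $0<h_0<a$ and $0<|h|\le h_0$ we have the pathwise bound $|h^{-1}(\ell^{a+h}-\ell^a)|\le\sup_{x\in[a-h_0,\,a+h_0]}|\dot\ell^x|$, whose right-hand side lies in $L^1(\N^{*,z}_0)$ by Lemma \ref{lem-tech} (with $q=1$, $u=a-h_0$, $v=a+h_0$); this also shows that $\N^{*,z}_0(\dot\ell^a)$ is well defined. Dominated convergence then gives $\N^{*,z}_0(\dot\ell^a)=\partial_a\N^{*,z}_0(\ell^a)$, so it remains to differentiate the right-hand side of \eqref{moment1}: with $u=3z/(2a^2)$, hence $\partial_a u=-2u/a$, the chain rule gives $\partial_a\big(a^{-2}\chi(u)\big)=-2a^{-3}\big(\chi(u)+u\chi'(u)\big)$, so $\N^{*,z}_0(\dot\ell^a)=-2\sqrt{6\pi}\,z^{5/2}a^{-3}\big(\chi(u)+u\chi'(u)\big)$; since $2\sqrt{6\pi}\,z^{5/2}a^{-3}=\tfrac83\sqrt\pi\,z\,u^{3/2}$, this equals $z\,\gamma(u)$, which is \eqref{moment2} and explains the particular form chosen for $\gamma$.

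Thus the real content is \eqref{moment1}, the computation of $\N^{*,z}_0(\ell^a)$. First, the scaling property \eqref{N*props}(iii), together with the identity $\ell^y(\theta_\lambda\omega)=\lambda^{3/2}\ell^{y/\sqrt\lambda}(\omega)$, shows that $z^{-5/2}\N^{*,z}_0(\ell^a)$ depends only on $z/a^2$, and it remains to identify that function. I would do this through the exit measure at level $a$: from $\ell^a=\int_0^\infty\XX^a_r\,\dd r$ (formula \eqref{local3}, which continues to hold under $\N^{*,z}_0$ by the same scaling and re-rooting arguments used for the occupation density), the quantity we need is the law under $\N^{*,z}_0$ of the exit-measure process $(\XX^a_r)_{r\ge0}$, or at least enough of its structure to compute the mean of its integral. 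A crucial point is that this is \emph{not} the law of a critical $\varphi$-CSBP started from $\z_a$ — that would force $\N^{*,z}_0(\ell^a)=+\infty$, since the integral of a critical CSBP has infinite mean — so the positivity conditioning genuinely changes the exit-measure structure at levels $a>0$.

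To pin down that law I would combine the re-rooting property of $\N^*_0$ with the special Markov property and the super-Brownian-motion representation of Section \ref{sec:super}, reducing the problem to Laplace-functional computations for exit measures of the Brownian snake. These are governed by the ordinary differential equation $\tfrac12 u''=2u^2$ (equivalently $u''=4u^2$), whose relevant solutions are of the form $u(x)=\tfrac{3/2}{(x_0-x)^2}$; the constant $\tfrac32$ in the argument $3z/(2a^2)$ originates here, as in $\N_x(W^*>x+r)=3/(2r^2)$, and $\mathrm{erfc}$ enters through the resulting Gaussian/stable integrals. With the law of $\XX^a$ (or directly that of $\ell^a$) under $\N^{*,z}_0$ in hand, \eqref{moment1} follows by a first-moment computation, the explicit output of which simultaneously yields the finiteness of $\N^{*,z}_0(\ell^a)$ and its value in terms of $\chi$.

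I expect the main obstacle to be exactly this identification — describing, under the conditioning $\{\z^*_0=z\}$, the law of the trajectory above level $a$: the excursion theory of Section \ref{sec:excu}, formulated under the ordinary measures $\N^{(y)}_x$, does not transfer verbatim to $\N^{*,z}_0$, and one must go through re-rooting and the superprocess picture, or import the needed distributional facts from the references underlying Lemma \ref{maxN*} and formula \eqref{local3}. Once that is settled, the remaining steps — the finiteness bound, the explicit formulas, and the differentiation giving \eqref{moment2} — are routine.
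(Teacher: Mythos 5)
Your reduction of \eqref{moment2} to \eqref{moment1} is essentially the paper's own argument: Lemma \ref{lem-tech} with $q=1$ dominates the difference quotients $h^{-1}(\ell^{a+h}-\ell^a)$, so $a\mapsto\N^{*,z}_0(\ell^a)$ is differentiable with derivative $\N^{*,z}_0(\dot\ell^a)$, and differentiating the explicit right-hand side of \eqref{moment1} produces exactly the function $\gamma$. Your sanity check that the exit-measure process at level $a>0$ under $\N^{*,z}_0$ cannot be a critical $\varphi$-CSBP (else the mean local time would be infinite) is also correct and is a good way to see that the conditioning matters.

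The genuine gap is that \eqref{moment1} itself — which you rightly call the real content — is never proved. Your scaling argument only reduces $z^{-5/2}\N^{*,z}_0(\ell^a)$ to an unknown function of $z/a^2$, and the proposed route through re-rooting, the special Markov property and the equation $u''=4u^2$ is a plan rather than a derivation: the step ``with the law of $\XX^a$ (or of $\ell^a$) under $\N^{*,z}_0$ in hand, \eqref{moment1} follows'' presupposes precisely the distributional input that is missing, and you acknowledge this yourself. The references you point to for Lemma \ref{maxN*} and \eqref{local3} do not supply it. The paper closes this gap by quoting the spine (first-moment) representation of \cite[Proposition 3]{Spine}, which gives directly
$$\N^{*,z}_0\Bigl(\int_0^\infty f(a)\,\ell^a\,\dd a\Bigr)=\int_0^\infty f(a)\,\sqrt{6\pi}\,a^{-2}z^{5/2}\,\chi\Bigl(\frac{3z}{2a^2}\Bigr)\dd a,$$
i.e.\ \eqref{moment1} for Lebesgue-a.e.\ $a$; the identity for \emph{every} $a>0$ then follows because the $L^1$ differentiability argument above shows $a\mapsto\N^{*,z}_0(\ell^a)$ is continuous on $(0,\infty)$ (Fatou giving finiteness first). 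Without that explicit first-moment formula under the conditioned measure — or an equivalent computation, e.g.\ of the law of $\z_a$ and of the excursions above $a$ under $\N^{*,z}_0$, which is substantially harder than the Laplace-functional ODE heuristics you sketch — the claimed expression for $\chi$ is not obtained, so the proposal as it stands does not establish the proposition.
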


\rem The function $\chi$
is positive on $(0,\infty)$ and 
its Laplace transform is $\int_0^\infty \chi(z)\,e^{-\lambda z}\,\dd z=(1+\sqrt{\lambda})^{-3}$, cf. the appendix of \cite{Spine}.

\proof  By \cite[Proposition 3]{Spine}, 
 we have, for every nonnegative Borel function $f$ on $[0,\infty)$,
 $$\N^{*,z}_0\Bigg(\int_0^\infty f(a)\,\ell^a\,\dd a\Bigg)=\N^{*,z}_0\Bigg(\int_0^\sigma f(\wh W_s)\,\dd s\Bigg)
 = \int_0^\infty f(a)\,\pi_z(a)\,\dd a$$
 where 
 $$\pi_z(a)= \sqrt{6\pi}\, a^{-2}\,z^{5/2}\,\chi(\frac{3z}{2a^2}),$$
 and $\chi(\cdot)$ is as in the statement. 
 So, we have
\begin{equation}
\label{desint}
\int_0^\infty f(a)\,\N^{*,z}_0(\ell^a)\,\dd a= \int_0^\infty f(a)\,\pi_z(a)\,\dd a.
\end{equation}
It follows that $\N^{*,z}_0(\ell^a)=\pi_z(a)$ for a.e. $a>0$, and Fatou's lemma then gives
$\N^{*,z}_0(\ell^a)\leq\pi_z(a)<\infty$ for every $a>0$. 

If $a>0$ is fixed, we have $\N^{*,z}_0$ a.s.
\begin{equation}
\label{conv-deriv}
\frac{1}{b-a} \int_a^b \dot \ell^c\,\dd c=\frac{1}{b-a} (\ell^b-\ell^a) \build{\la}_{b\to a,b\not =a}^{} \dot \ell^a.
\end{equation}
From Lemma \ref{lem-tech} and dominated convergence, we get that  the convergence \eqref{conv-deriv} holds in $L^1(\N^{*,z}_0)$. 
Consequently,
$$\frac{1}{b-a} (\N^{*,z}_0(\ell^b)-\N^{*,z}_0(\ell^a)) \build{\la}_{b\to a,b\not =a}^{} \N^{*,z}_0(\dot \ell^a).$$

It follows that the function $a\mapsto \N^{*,z}_0(\ell^a)$ is differentiable on
$(0,\infty)$, and 
$$\frac{\dd}{\dd a} \N^{*,z}_0(\ell^a)=\N^{*,z}_0(\dot \ell^a).$$

In particular, since  $a\mapsto \N^{*,z}_0(\ell^a)$ is continuous on $(0,\infty)$, we deduce from \eqref{desint} that
$\N^{*,z}_0(\ell^a)= \pi_z(a)$ for every $a\in(0,\infty)$, which give \eqref{moment1}. 
Then
$$\N^{*,z}_0(\dot \ell^a)= \frac{\dd}{\dd a} \N^{*,z}_0(\ell^a)=\sqrt{6\pi}\Big(-2a^{-3}z^{5/2}\,\chi(\frac{3z}{2a^2})-3\,a^{-5}z^{7/2}\,\chi'(\frac{3z}{2a^2})\Big),$$
and formula \eqref{moment2} follows. \endproof

We now record some asymptotics of the function $\gamma(u)$ introduced in the proposition, which will
be useful in the next sections. We first  note that
\begin{equation}\label{chi'}\chi'(x)= \frac{2}{\sqrt{\pi}}\Big(x^{3/2}+3x^{1/2}+\frac{1}{2}x^{-1/2}\Big)+ \Big(-2x^2-7x-3\Big)\,e^x\mathrm{erfc}(\sqrt{x}),
\end{equation}
and, for every integer $N\geq 0$,
$$e^x\mathrm{erfc}(\sqrt{x})=\frac{1}{\sqrt{\pi}} \sum_{n=0}^N (-1)^{n}\,\frac{1\times 3\times\cdots\times (2n-1)}{2^n}\,x^{-n-1/2}+ O(x^{-N-3/2}),$$
as $x\to\infty$.
By simple calculations it follows that, as $x\to\infty$,
\begin{equation}\label{chiinfty}\chi(x)=\frac{1}{\sqrt{\pi}}\Big(\frac{3}{2} x^{-3/2} -\frac{15}{2} x^{-5/2}+O(x^{-7/2})\Big)
\end{equation}
and 
\begin{equation}\label{chi'infty}\chi'(x)=\frac{1}{\sqrt{\pi}}\Big(-\frac{9}{4}x^{-5/2} +\frac{75}{4}x^{-7/2}+O(x^{-9/2})\Big).
\end{equation}
 Consequently,
\begin{equation}\label{gaminfty}\gamma(x)=2-\frac{30}{x}+O(x^{-2})\  \text{ as $x\to\infty$},
\end{equation}
 and so by \eqref{moment2}, $\N^{*,z}_0(\dot \ell^a)=2z+O(a^{2})$ as $a\to 0$. 
Moreover, from the formulas for $\chi$ and $\chi'$, one has 
\begin{equation}\label{gam0}
\gamma(x)=-8x^2+o(x^2)\text{ as }x\to 0,
\end{equation}
 and therefore $\N^{*,z}_0(\dot \ell^a)=-18a^{-4}\,z^3+o(z^3)$ as $z\to 0$.
 
We can also estimate
\begin{equation}\label{gam'inf}\gamma'(x)=\frac{3}{2}\frac{\gamma(x)}{x} + (-\frac{8}{3}\sqrt{\pi})\,x^{3/2}(2\chi'(x)+x\chi''(x))
=\frac{15}{x} + (-\frac{8}{3}\sqrt{\pi})x^{5/2}\chi''(x)+O(x^{-2}), 
\end{equation}
as $x\to\infty$. Noting that
$$\chi''(x)=\frac{2}{\sqrt{\pi}}\Big(x^{3/2}+5x^{1/2}+3x^{-1/2}-\frac{1}{4} x^{-3/2}\Big)+\Big(-2x^2-11x-10\Big)\,e^x\mathrm{erfc}(\sqrt{x}),$$
we can verify that 
$$x^{5/2}\chi''(x)=\frac{1}{\sqrt{\pi}}\,\frac{45}{8x} + O(x^{-2})$$
and consequently $\gamma'(x)=O(x^{-2})$ as $x\to\infty$. 
From the first equality in \eqref{gam'inf}, \eqref{chi'}, the above expression for $\chi''$, and \eqref{gam0}, one gets that $\gamma'(x)=-16\,x+o(x)$ when $x\to 0$. It follows 
from the preceding estimates for $\gamma'$ that if $\gamma'(0):=0$, then
\begin{equation}\label{gamma'cont}
\gamma'\text{ is continuous on }[0,\infty),
\end{equation} and
\begin{equation}\label{intgam'}
\int_0^\infty |\gamma'(x)|(1\vee x^{-1})\,\dd x<\infty.
\end{equation}

\subsection{The derivative of local times of super-Brownian motion}

We now consider the super-Brownian motion $\mathbf{X}$ started at $\mathbf{X}_0=\alpha\,\delta_0$ constructed as in Section \ref{sec:super}, and its local
times $(L^a)_{a\in\R}$. We fix $a\geq 0$ and $h>0$. Let $\Theta_a$
denote the law of the pair $(L^a,\frac{1}{2}\dot L^a)$ under $\P(\cdot\cap\{L^a>0\})$. Our goal is to compute the
conditional expectation
$$\E\Big[\dot L^{a+h}\,\Big|\, L^a=t,\frac{1}{2}\dot L^a=y\Big]$$
for $t>0$ and $y\in\R$. Recall that we will interpret this conditional expectation as in \eqref{condi-expec}, using \eqref{decomp-local}.  Therefore we can unambiguously make assertions for all $h>0$ simultaneously.

\begin{proposition}
\label{tech-incre}
Let $a\geq 0$. Then, for $\Theta_a$-almost every $(t,y)$, for every $h>0$, we have
$$\E[|\dot L^{a+h}|\,|\, L^a=t,\frac{1}{2}\dot L^a=y]<\infty$$  and
\begin{equation}
\label{key-equa}
\E\Big[\dot L^{a+h}\,\Big|\, L^a=t,\frac{1}{2}\dot L^a=y\Big]=\E\Big[\sum_{j=1}^\infty Z_j\,\gamma\Big(\frac{3Z_j}{2h^2}\Big)\Big],
\end{equation}
where $(Z_j)_{j\geq 1}$ is the sequence of jumps of the $\psi$-L\'evy bridge $U^{\mathrm{br},t,y}$, listed in decreasing order, and
\begin{equation}
\label{bound-series}
\E\Big[\sum_{j=1}^\infty Z_j\,\Big|\gamma\Big(\frac{3Z_j}{2h^2}\Big)\Big|\Big]<\infty, \text{ for every }h>0.
\end{equation}
\end{proposition}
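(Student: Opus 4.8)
The plan is to combine the excursion decomposition \eqref{decomp-local} with the key ingredient Proposition \ref{key-ingre} and the explicit first-moment formula \eqref{moment2} from Proposition \ref{expected-derivative}. First I would work under the conditional law described after Proposition \ref{key-ingre}: conditionally on $L^a=t$ and $\tfrac12\dot L^a=y$, the family $(\omega^{a,+}_j)_{j\in\N}$ has the law of $(\varpi_j)_{j\in\N}$ where, conditionally on the jumps $(Z_j)_{j\in\N}$ of the $\psi$-L\'evy bridge $U^{\mathrm{br},t,y}$, the $\varpi_j$ are independent with $\varpi_j\sim\N^{*,Z_j}_a$. By \eqref{decomp-local} the relevant quantity is $\dot L^{a+h}=\sum_j\dot\ell^{a+h}(\omega^{a,+}_j)$, which under the conditional law is $\sum_j\dot\ell^{a+h}(\varpi_j)$. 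Since $\varpi_j$ lives on $\S_a$ and $h>0$, translating back to $\S_0$ and applying \eqref{moment2} gives $\N^{*,Z_j}_0(\dot\ell^h)=Z_j\,\gamma(3Z_j/2h^2)$; taking the conditional expectation given $(Z_j)_j$ and then averaging over the bridge yields the right-hand side of \eqref{key-equa}, \emph{provided} we can justify the interchange of expectation and the (absolutely convergent) sum.

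The main work is therefore to prove the integrability statement \eqref{bound-series}, i.e. $\E\big[\sum_j Z_j\,|\gamma(3Z_j/2h^2)|\big]<\infty$; this is also what legitimizes taking the conditional expectation termwise (via Fubini/dominated convergence) and simultaneously gives the claimed finiteness of $\E[|\dot L^{a+h}|\mid L^a=t,\tfrac12\dot L^a=y]$ for $\Theta_a$-a.e.\ $(t,y)$. The strategy for \eqref{bound-series} is to split the sum according to the size of $Z_j$ relative to $h^2$. For the \emph{large} jumps, $3Z_j/2h^2\ge 1$ say, the bound $|\gamma(u)|\le C$ for $u\ge 1$ (which follows from \eqref{gaminfty}) reduces the problem to $\E[\sum_j Z_j]$; by the L\'evy--Khintchine / compensation formula for the bridge, or more simply by noting that $\sum_j Z_j$ is dominated by the total variation of $U^{\mathrm{br},t,y}$ over $[0,t]$ (equivalently, by using \eqref{RNF} to compare with the unconditioned process $U$ on a subinterval and a time-reversal), this expectation is finite for $\Theta_a$-a.e.\ $(t,y)$. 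For the \emph{small} jumps, $3Z_j/2h^2<1$, the asymptotics \eqref{gam0} give $|\gamma(u)|\le C u^2$ for $u$ small, so $Z_j|\gamma(3Z_j/2h^2)|\le C h^{-4}Z_j^3$, and the sum $\E[\sum_j Z_j^3\mathbf 1_{\{Z_j\le 2h^2/3\}}]$ is controlled: the L\'evy measure $\tfrac12\bn(\dd z)\propto z^{-5/2}\dd z$ integrates $z^3$ near $0$, so by the exponential formula (and the absolute continuity \eqref{RNF} of the bridge law with respect to the law of $U$, which has a \emph{bounded} density) this is finite.

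The one delicate point is that all of the above must hold for \emph{every} choice of $(t,y)$ with $t>0$ (not merely $\Theta_a$-a.e.), since the conditional-expectation interpretation in \eqref{condi-expec} is defined for all such $(t,y)$ — but in fact the small-jump estimate is uniform in $(t,y)$ because it depends only on the L\'evy measure, while the large-jump estimate $\E[\sum_j Z_j]<\infty$ does hold for every fixed $t>0,y\in\R$ by the explicit bridge construction (the bridge has finite total variation a.s.\ and its expectation is finite, e.g.\ by \eqref{RNF} applied on $[0,t/2]$ and a symmetric argument on $[t/2,t]$ via time reversal). I expect the main obstacle to be precisely this uniform control of the large-jump part: handling the jumps near the terminal time of the bridge, where \eqref{RNF} does not directly apply, requires the time-reversal remark recorded after \eqref{RNF} (that $(y-U^{\mathrm{br},t,y}_{(t-s)-})_{0\le s\le r}$ also has bounded density with respect to the law of $U$). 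Once \eqref{bound-series} is in hand, the identity \eqref{key-equa} and the a.s.\ finiteness of the conditional expectation follow immediately by termwise integration, using that, conditionally on $(Z_j)_j$, $\E[\dot\ell^{a+h}(\varpi_j)]=\N^{*,Z_j}_0(\dot\ell^h)=Z_j\gamma(3Z_j/2h^2)$ with the series converging absolutely in $L^1$.
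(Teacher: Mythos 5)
Your derivation of \eqref{bound-series} is fine and is essentially the paper's: bound $|\gamma(u)|\le C(1\wedge u^2)$, split the jumps according to whether they occur in $[0,t/2]$ or $[t/2,t]$, and compare with the unconditioned process via \eqref{RNF} and its time-reversed counterpart; this indeed works for every fixed $t>0$, $y\in\R$. The genuine gap is in your final step, where you assert that \eqref{bound-series} ``legitimizes taking the conditional expectation termwise (via Fubini/dominated convergence)'' and that the series $\sum_j\dot\ell^{a+h}(\varpi_j)$ converges ``absolutely in $L^1$''. What \eqref{bound-series} controls is $\E\bigl[\sum_j |\N^{*,Z_j}_a(\dot\ell^{a+h})|\bigr]$, i.e.\ the absolute values of the \emph{conditional means}; Fubini for the signed variables $\dot\ell^{a+h}(\varpi_j)$ would require $\E\bigl[\sum_j |\dot\ell^{a+h}(\varpi_j)|\bigr]<\infty$, and since $|\N^{*,z}_a(\dot\ell^{a+h})|\le \N^{*,z}_a(|\dot\ell^{a+h}|)$ with no reverse inequality, the former does not imply the latter. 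The authors explicitly remark after their proof that they were unable to verify $\E\bigl[\sum_j|\dot\ell^{a+h}(\varpi_j)|\bigr]<\infty$, which is exactly why their argument is longer: they truncate with the events $H_n$ (all $Z_j\le n$, and every excursion with boundary size $<1/n$ has $W^*<a+h$, hence contributes $0$ at level $a+h$), interchange sum and expectation on $H_n$ using the \emph{uniform} bound $\sup_{1/n\le z\le n}\N^{*,z}_a(|\dot\ell^{a+h}|)<\infty$ from Lemma \ref{lem-tech} together with $\E[N_n]<\infty$, and then let $n\to\infty$ by dominated convergence, using \eqref{bound-series} on one side and the integrability of $\sum_j\dot\ell^{a+h}(\varpi_j)$ on the other.

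This points to the second gap: that integrability, i.e.\ $\E[|\dot L^{a+h}|\mid L^a=t,\frac12\dot L^a=y]<\infty$ for $\Theta_a$-a.e.\ $(t,y)$, also does not follow from \eqref{bound-series}, for the same reason ($\E[|\sum_j\dot\ell^{a+h}(\varpi_j)|]$ is not controlled by the conditional means). In the paper it is an independent input coming from the superprocess side: Lemma \ref{moment-deri}, proved via Hong's martingale-measure representation \eqref{repre-deri}, gives $\E[|\dot L^{a+h}-\dot L^a|]<\infty$, hence finiteness of the conditional expectation $\Theta_a$-a.e., and part (ii) of that lemma (the H\"older-type moment bound) upgrades this to ``simultaneously for all $h>0$'' on a single $\Theta_a$-full set. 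Without this ingredient your dominated-convergence step at the end has nothing to dominate with, so the proposal as written does not close; you need both Lemma \ref{moment-deri} and a truncation in the spirit of the events $H_n$ (or some substitute for the missing absolute convergence).
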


\proof
From the asymptotics derived at the end of Section \ref{sec:casepositive}, we have $|\gamma(z)|\leq C(1\wedge z^2)$ for some constant $C$. Hence, using the
absolute continuity relation \eqref{RNF}, we claim it is easy 
to verify \eqref{bound-series},
so that the right-hand side of \eqref{key-equa} makes sense. To see this, write the sum inside the expectation in \eqref{bound-series} as $S_1+S_2$, where $S_1$ corresponds to the contribution from jumps occurring in $[0,t/2]$ and $S_2$ corresponds to those which occurred in $[t/2,t]$.  Apply \eqref{RNF} to show that $\E[S_1]<\infty$, and its counterpart for the time-reversed process $(y-U^{\text{br},t,y}_{(t-s)-})_{0\le s\le t/2}$ to show $\E[S_2]<\infty$. 

Let $h>0$. By Lemma \ref{moment-deri} (i), $\E[|\dot L^{a+h}-\dot L^{a}|]<\infty$, and therefore we have
\begin{equation}
\label{bound-1mom}
\E[|\dot L^{a+h}|\mid L^a=t,\frac{1}{2}\dot L^a=y]<\infty,\ \text{for $\Theta_a$-a.e.~$(t,y)$}.
\end{equation}
By the convention noted before the Proposition, the quantity $\E[|\dot L^{a+h}|\mid L^a=t,\frac{1}{2}\dot L^a=y]$ is well defined simultaneously for every choice of $t>0$, $y\in\R$, and $h>0$.
 Lemma \ref{moment-deri} (ii) shows that \eqref{bound-1mom} 
holds simultaneously for every $h>0$, for $\Theta_a$-a.e.~$(t,y)$, giving the first required result.
{\bf In what follows, we
fix $t>0$ and $y\in\R$ such that \eqref{bound-1mom} holds for every $h>0$.}

Recall the notation introduced before Proposition \ref{key-ingre}. By \eqref{decomp-local}, we have
\begin{equation}
\label{ident-incre}
\dot L^{a+h}=\sum_{j\in \N} \dot\ell^{a+h}(\omega^{a,+}_j).
\end{equation}
where the sum involves only a finite number of nonzero terms. We know
from Proposition \ref{key-ingre} that the 
conditional distribution of
$(\omega^{a,+}_j)_{j\in\N}$
knowing $L^a=t$ and $\frac{1}{2}\dot L^a=y$,
is the law of $(\varpi_j)_{j\in\N}$, where, conditionally on the (ordered) sequence
$(Z_j)_{j\in\N}$ of jumps of a $\psi$-L\'evy bridge $U^{\mathrm{br},t,y}$, the snake trajectories
$\varpi_j$ are independent and $\varpi_j$ is distributed according to $\N^{*,Z_j}_a$.
Therefore, \eqref{ident-incre} gives
$$\E\Big[\dot L^{a+h}\,\Big|\, L^a=t,\frac{1}{2}\dot L^a=y\Big]=\E\Big[\sum_{j=1}^\infty \dot\ell^{a+h}(\varpi_j)\Big],$$
where $(\varpi_j)_{j\in\N}$ and $(Z_j)_{j\in\N}$ are as described above. Note that the (a.s. finite) sum
$\sum_{j=1}^\infty \dot\ell^{a+h}(\varpi_j)$ is an integrable random variable, as a consequence 
of \eqref{bound-1mom} and \eqref{ident-incre}.
To get \eqref{key-equa} it then suffices to show that
\begin{equation}
\label{tech-incre3}
\E\Big[\sum_{j=1}^\infty \dot\ell^{a+h}(\varpi_j)\Big]=\E\Big[\sum_{j=1}^\infty Z_j\,\gamma\Big(\frac{3Z_j}{2h^2}\Big)\Big].
\end{equation}

For every integer $n\geq 1$, set $N_n:=\max\{j\in\N:Z_j\geq 1/n\}$, with the convention $\max\varnothing=0$. Let $H_n$
stand for the event where $Z_j\leq n$ for every $j\in \N$, and
$W^*(\varpi_j)<a+h$ for every $j>N_n$. 
Then, 
$$
\E\Big[\mathbf{1}_{H_n} \sum_{j=1}^{N_n} | \dot \ell^{a+h}(\varpi_j)|\Big]
=\E\Big[\E\Big[\mathbf{1}_{H_n} \sum_{j=1}^{N_n} |\dot \ell^{a+h}(\varpi_j)|\,\Big|\,(Z_j)_{j\in\N}\Big]\Big]
\leq \E\Big[ \mathbf{1}_{\{Z_j\leq n,\,\forall j\in\N\}} \sum_{j=1}^{N_n} \N^{*,Z_j}_a(|\dot \ell^{a+h}|)\Big]
<\infty
$$
because we know that $\N^{*,z}_a(|\dot \ell^{a+h}|)$ is bounded by a constant if $1/n\leq z\leq n$ (Lemma \ref{lem-tech}), and 
it is easy to verify that $\E[N_n\,|\, L^a=t,\dot L^a=y]<\infty$. For the latter we again may use the absolute continuity of the law of the L\'evy bridge $U^{\mathrm{br},t,y}$ with respect to the law of the L\'evy process
$U$ in \eqref{RNF}, and the analogue for the time-reversed processes, to count the jumps occurring in $[0,t/2]$ and $[t/2,t]$ separately. The preceding display allows us to interchange sum and expected
value in the following calculation,
\begin{align*}
\E\Big[\mathbf{1}_{H_n} \sum_{j=1}^{N_n}  \dot \ell^{a+h}(\varpi_j)\Big]
=\sum_{j=1}^\infty \E\Big[\mathbf{1}_{H_n}\mathbf{1}_{\{j\leq N_n\}} \dot \ell^{a+h}(\varpi_j)\Big]
&=\sum_{j=1}^\infty \E\Big[\mathbf{1}_{H_n}\mathbf{1}_{\{j\leq N_n\}}\,\N^{*,Z_j}_a(\dot \ell^{a+h})\Big]\\
&=\sum_{j=1}^\infty \E\Big[\mathbf{1}_{H_n}\mathbf{1}_{\{j\leq N_n\}}\,Z_j\gamma\Bigl(\frac{3 Z_j}{2 h^2}\Bigr)\Big],
\end{align*}
where \eqref{moment2} is used in the last. In the second equality, we also use the conditional independence of the excursions $\varpi_j$
given their boundary sizes $Z_j$. The left-hand side of the last display is equal to
$$\E\Big[\mathbf{1}_{H_n} \sum_{j=1}^{\infty}  \dot \ell^{a+h}(\varpi_j)\Big]\build{\la}_{n\to\infty}^{} \E\Big[\sum_{j=1}^{\infty}  \dot \ell^{a+h}(\varpi_j)\Big]$$
by dominated convergence (recall that the variable $\sum_{j=1}^{\infty}  \dot \ell^{a+h}(\varpi_j)$ is integrable). On the other hand, the
right-hand side is
$$\E\Big[\mathbf{1}_{H_n}\sum_{j=1}^{N_n}\,Z_j\gamma\Bigl(\frac{3 Z_j}{2 h^2}\Bigr)\Big]\build{\la}_{n\to\infty}^{} \E\Big[\sum_{j=1}^\infty Z_j\gamma\Bigl(\frac{3 Z_j}{2 h^2}\Bigr)\Big]$$
by dominated convergence again, using \eqref{bound-series}.
This completes the proof of \eqref{tech-incre3}, and hence of the proposition.
\endproof

\rem The preceding proof would be much shorter if one could verify that $\E\big[\sum_{j=1}^\infty |\dot\ell^{a+h}(\varpi_j)|\big]<\infty$.
However, this does not seem to follow from our estimates. 

\smallskip
Recall the notation $p_t(y)$ for the density at time $t$ of the L\'evy process $U$ in Section \ref{subsec:bridge}.
To simplify notation, we also set $c_1:=\sqrt{3/8\pi}$, so that the L\'evy measure of $U$ is $\frac{1}{2}\bn(\dd z)=c_1z^{-5/2}\,\mathbf{1}_{(0,\infty)}(z)\,\dd z$. For $h,t>0$ and $y\in\R$, we introduce
\begin{equation}
\label{def-gh}
g_{h}(t,y)=\frac{1}{h}\,\frac{c_1t}{p_t(y)}\,\int_0^{\infty} \Big(p_t(y)-p_t(y-h^2z)\Big)\Big(2-\gamma\Bigl(\frac{3z}{2}\Bigr)\Big)\,\frac{\dd z}{z^{3/2}},
\end{equation}
and set $g_{h}(0,y)=0$. The boundedness of $p_t$, $|p'_t|$ and  $|\gamma|$ (the latter from \eqref{gaminfty} and \eqref{gam0}), and the Mean Value Theorem, show that the above integrand is integrable on $[0,\infty)$.

\begin{proposition}
\label{increment-derivative}
Let $a\geq 0$.
For $\Theta_a$-almost every $(t,y)\in (0,\infty)\times\R$, we have, for every $h>0$,
$$\E\Big[\dot L^{a+h}-\dot L^a\,\Big|\, L^a=t,\frac{1}{2}\dot L^a=y\Big]=g_{h}(t,y),$$
and
$$\lim_{h\to 0} \frac{1}{h}\, \E\Big[\dot L^{a+h}-\dot L^a\,\Big|\, L^a=t,\frac{1}{2}\dot L^a=y\Big]
= 8\,t\, \frac{p'_t(y)}{p_t(y)}.$$
\end{proposition}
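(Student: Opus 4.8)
The plan is as follows. Fix $a\ge 0$ and a pair $(t,y)$ with $t>0$ for which the conclusions of Proposition~\ref{tech-incre} hold for all $h>0$; this is the case for $\Theta_a$-a.e.\ $(t,y)$. Since $\dot L^a$ is a measurable function of $(L^x,\dot L^x)_{x\le a}$, it takes the value $2y$ under the conditioning $\{L^a=t,\ \tfrac12\dot L^a=y\}$, so by Proposition~\ref{tech-incre},
$$\E\Big[\dot L^{a+h}-\dot L^a\,\Big|\,L^a=t,\tfrac12\dot L^a=y\Big]=\E\Big[\sum_{j}Z_j\,\gamma\big(\tfrac{3Z_j}{2h^2}\big)\Big]-2y,$$
where $(Z_j)$ are the ranked jumps of the bridge $U^{\mathrm{br},t,y}$. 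The first step is a Palm-type formula for the jumps of the bridge:
$$\E\Big[\sum_j F(Z_j)\Big]=\frac{t}{p_t(y)}\int_0^\infty F(z)\,c_1z^{-5/2}\,p_t(y-z)\,\dd z$$
for nonnegative measurable $F$ with the right-hand side finite (and, by \eqref{bound-series}, also for the signed $F(z)=z\gamma(\tfrac{3z}{2h^2})$ after splitting into positive and negative parts). I would prove this by splitting the sum according to whether a jump falls in $[0,t/2]$ or in $(t/2,t]$. For jumps in $[0,t/2]$, combine the absolute-continuity relation \eqref{RNF} (with $r=t/2$) with Mecke's formula for the Poisson process of jumps of $U$ on $[0,t/2]$: inserting a jump of size $z$ before time $t/2$ replaces $U_{t/2}$ by $U_{t/2}+z$, and $\E[p_{t/2}(y-U_{t/2}-z)]=(p_{t/2}*p_{t/2})(y-z)=p_t(y-z)$, giving half of the claimed expression. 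For jumps in $(t/2,t]$, apply the same reasoning to the time-reversed bridge $(y-U^{\mathrm{br},t,y}_{(t-s)-})_{0\le s\le t}$, which is again a bridge over $[0,t]$ from $0$ to $y$ of a L\'evy process with the same law as $U$ and whose jumps in $[0,t/2]$ are precisely the jumps of $U^{\mathrm{br},t,y}$ in $(t/2,t]$; this contributes the other half. Taking $F(z)=z\,\gamma(\tfrac{3z}{2h^2})$ and substituting $z=h^2w$ then gives
$$\E\Big[\sum_jZ_j\,\gamma\big(\tfrac{3Z_j}{2h^2}\big)\Big]=\frac{c_1t}{h\,p_t(y)}\int_0^\infty w^{-3/2}\,\gamma\big(\tfrac{3w}{2}\big)\,p_t(y-h^2w)\,\dd w.$$

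The second step rewrites the definition \eqref{def-gh} of $g_h$ to match this. Since $\gamma(v)v^{-3/2}=-\tfrac83\sqrt\pi\,\tfrac{\dd}{\dd v}(v\chi(v))$ and $v\chi(v)\to0$ as $v\to0+$ and as $v\to\infty$ (the latter from \eqref{chiinfty}), one has $\int_0^\infty\gamma(v)v^{-3/2}\,\dd v=0$, hence $\int_0^\infty\gamma(\tfrac{3z}{2})z^{-3/2}\,\dd z=0$. Expanding $2-\gamma(\tfrac{3z}{2})$ in \eqref{def-gh}, using this cancellation, and substituting $u=h^2z$ in the term carrying the factor $2$, one obtains
$$g_h(t,y)=\frac{2c_1t}{p_t(y)}\int_0^\infty\big(p_t(y)-p_t(y-u)\big)u^{-3/2}\,\dd u+\frac{c_1t}{h\,p_t(y)}\int_0^\infty w^{-3/2}\,\gamma\big(\tfrac{3w}{2}\big)\,p_t(y-h^2w)\,\dd w.$$
Comparing with the previous display, the equality $\E[\dot L^{a+h}-\dot L^a\mid\cdots]=g_h(t,y)$ is thus equivalent to the analytic identity
$$y\,p_t(y)=-c_1t\int_0^\infty\big(p_t(y)-p_t(y-u)\big)u^{-3/2}\,\dd u.$$
I would prove the latter by Fourier transform. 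Both sides belong to $L^1(\R)$ (using $p_t(y)=O(y^{-5/2})$ as $y\to+\infty$ by \eqref{rtpt} and the rapid decay of $p_t$ at $-\infty$). The transform of $y\,p_t(y)$ is $\II t\Psi'(u)e^{-t\Psi(u)}$ with $\Psi$ as in \eqref{carac-U}; using $\int_0^\infty(1-e^{\II ua})a^{-3/2}\,\dd a=\sqrt{2\pi}\,|u|^{1/2}(1-\II\,\mathrm{sgn}(u))$, the transform of the right-hand side is $-c_1t\sqrt{2\pi}\,|u|^{1/2}(1-\II\,\mathrm{sgn}(u))e^{-t\Psi(u)}$. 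These coincide because $\tfrac32c_0=c_1\sqrt{2\pi}$ (both equal $\tfrac{\sqrt3}{2}$), which yields the identity and hence the first assertion of the proposition. (Alternatively, one can integrate the scaling identity $\tfrac{\dd}{\dd y}(y\,p_t(y))=-\tfrac{3t}{2}\,\partial_t p_t(y)$ against the forward equation for $p_t$ and integrate by parts, the integration constant vanishing by decay at $+\infty$.)

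For the limit, I would work straight from \eqref{def-gh}: $\tfrac1h g_h(t,y)=\tfrac{c_1t}{h^2p_t(y)}\int_0^\infty\big(p_t(y)-p_t(y-h^2z)\big)\big(2-\gamma(\tfrac{3z}{2})\big)z^{-3/2}\,\dd z$. By the mean value theorem the integrand is bounded, uniformly in $h$, by $\|p'_t\|_\infty\,\big|2-\gamma(\tfrac{3z}{2})\big|\,z^{-1/2}$, which is integrable on $(0,\infty)$ (near $0$ because $\gamma(\tfrac{3z}{2})\to0$, near $\infty$ because $2-\gamma(\tfrac{3z}{2})=O(1/z)$ by \eqref{gaminfty}), while $h^{-2}(p_t(y)-p_t(y-h^2z))\to z\,p'_t(y)$ pointwise. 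Dominated convergence gives $\lim_{h\to0}\tfrac1h g_h(t,y)=\tfrac{c_1t\,p'_t(y)}{p_t(y)}\int_0^\infty\big(2-\gamma(\tfrac{3z}{2})\big)z^{-1/2}\,\dd z$, so it remains to check $c_1\int_0^\infty\big(2-\gamma(\tfrac{3z}{2})\big)z^{-1/2}\,\dd z=8$, equivalently (after $v=\tfrac{3z}{2}$) $\int_0^\infty(2-\gamma(v))v^{-1/2}\,\dd v=16\sqrt\pi$. I would compute this as $\lim_{s\to0+}\int_0^\infty(2-\gamma(v))v^{-1/2}e^{-sv}\,\dd v$: using $\gamma(v)v^{-1/2}=-\tfrac83\sqrt\pi\,v\,\tfrac{\dd}{\dd v}(v\chi(v))$, an integration by parts, and the Laplace transform $\int_0^\infty\chi(v)e^{-sv}\,\dd v=(1+\sqrt s)^{-3}$ (from the remark after Proposition~\ref{expected-derivative}) together with its first two $s$-derivatives, one gets $\int_0^\infty\gamma(v)v^{-1/2}e^{-sv}\,\dd v=2\sqrt\pi\,s^{-1/2}(1+\sqrt s)^{-4}-8\sqrt\pi\,(1+\sqrt s)^{-5}$, whence $\int_0^\infty(2-\gamma(v))v^{-1/2}e^{-sv}\,\dd v=2\sqrt\pi\,s^{-1/2}\big(1-(1+\sqrt s)^{-4}\big)+8\sqrt\pi\,(1+\sqrt s)^{-5}\to16\sqrt\pi$ as $s\to0+$.

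The main obstacle is the combination of the Palm-formula step (the two-halves splitting, the time-reversal identification, and the attendant integrability bookkeeping) with the scalar identity $y\,p_t(y)=-c_1t\int_0^\infty(p_t(y)-p_t(y-u))u^{-3/2}\,\dd u$; this identity is the one point where specific structural properties of the stable density $p_t$ (self-similarity together with the L\'evy--Khintchine exponent) must be invoked, and it is what ultimately produces the drift $g$. The closing Laplace-transform evaluation, though routine, also needs care: $\int_0^\infty\gamma(v)v^{-1/2}\,\dd v$ itself diverges, so one must keep $2-\gamma$ together throughout.
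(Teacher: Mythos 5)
Your proposal is correct, and it reaches the result by a recognizably different organization of the same ingredients. The paper never isolates a Palm formula for the bridge jumps: it introduces the truncated variable $R_\ve=\sum_j Y_j\mathbf{1}_{\{Y_j>\ve\}}-2c_1t\ve^{-1/2}-U_t$, identifies $\E[R_\ve\mid U_t=\cdot]$ by computing $\E[R_\ve e^{\II u U_t}]$ via Mecke's formula and Fourier inversion (which forces a separate continuity argument for the conditional expectation $\varphi_\ve$, again through \eqref{RNF} and time reversal), and then passes to $\sum_j Z_j\gamma(3Z_j/2h^2)$ by the layer-cake device $\gamma(x)=\int_0^x\gamma'$ together with $\int_0^\infty\gamma'(u)\,\dd u=2$ and $\int_0^\infty\gamma'(u)u^{-1/2}\,\dd u=0$. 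You instead prove the bridge Palm identity $\E[\sum_jF(Z_j)]=\frac{c_1t}{p_t(y)}\int_0^\infty F(z)z^{-5/2}p_t(y-z)\,\dd z$ directly from \eqref{RNF}, Mecke's formula, Chapman--Kolmogorov and time reversal (same probabilistic inputs, no $\ve$-truncation, no $\gamma'$ bookkeeping), and you shift all the Fourier work onto the deterministic identity $y\,p_t(y)=-c_1t\int_0^\infty(p_t(y)-p_t(y-u))u^{-3/2}\,\dd u$; this is in substance the paper's computation matching $\II t\Psi'(u)$ with $\int_0^\infty(1-e^{\II uz})z^{-3/2}\,\dd z$, now applied to integrable functions rather than conditional laws (equivalently, it is what \eqref{E4} gives upon letting $\ve\to\infty$), and your reduction using $\int_0^\infty\gamma(v)v^{-3/2}\,\dd v=0$ checks out. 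One detail to make explicit: Fourier uniqueness yields your identity only for Lebesgue-a.e.\ $y$, and since the fixed $(t,y)$ is merely $\Theta_a$-typical you must upgrade to every $y$ by noting both sides are continuous in $y$ (dominated convergence with $|p_t(y)-p_t(y-u)|\le\min(\Vert p_t'\Vert_\infty u,2\Vert p_t\Vert_\infty)$), exactly the kind of remark the paper makes for its pair of functions. Your Laplace-transform evaluation of $\int_0^\infty(2-\gamma(v))v^{-1/2}\,\dd v=16\sqrt{\pi}$ from $\int_0^\infty\chi(v)e^{-sv}\,\dd v=(1+\sqrt s)^{-3}$ is correct and cleaner than the paper's explicit antiderivative/erfc computation, and your pointwise dominated-convergence limit suffices for the proposition as stated, though the paper deliberately proves the locally uniform version (Lemma~\ref{lem:gconv}) because it is reused in the proof of Theorem~\ref{main-th}.
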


\proof {\bf From now on we fix $t>0$ and $y\in \R$} such that \eqref{key-equa} holds for every $h>0$, and we let the sequence 
$(Z_j)_{j\in\N}$ be as in Proposition \ref{tech-incre}. The first statement of Proposition \ref{increment-derivative} will follow
from the computation of 
$$\E\Big[\sum_{j\in\N} Z_j\,\gamma\Big(\frac{3Z_j}{2 h^2}\Big)\Big].$$
We consider the L\'evy process $U$ with Laplace exponent $\psi$ described in the Introduction and Section \ref{subsec:bridge}.
Write $(Y_j)_{j\in \N}$ for the collection of jumps of $U$ over $[0,t]$ (ranked in decreasing size), so that
we have
\begin{equation}
\label{F2}\E\Big[\sum_{j\in\N} Z_j\,\gamma\Big(\frac{3Z_j}{2 h^2}\Big)\Big]=\E\Big[\sum_{j\in\N} Y_j\,\gamma\Big(\frac{3Y_j}{2 h^2}\Big)\,\Big|\, U_t=y\Big].
\end{equation}
(Recall that, when we write $\E[\cdot\mid U_t=y]$, this means that we integrate with respect to the law of the $\psi$-L\'evy bridge from $0$ to $y$ in time $t$.)
We will first compute, for every $\ve >0$,
$$\E\Big[\sum_{j\in\N} Y_j\,\mathbf{1}_{\{Y_j>\ve\}}\,\Big|\,U_t=y\Big].$$
To this end, we evaluate, for every $u\in\R$,
$$\E\Big[\Big(\sum_{j\in\N} Y_j\,\mathbf{1}_{\{Y_j>\ve\}}\Big) e^{\II u U_t}\Big].$$

Set
$$R_\ve= \sum_{j\in\N} Y_j\,\mathbf{1}_{\{Y_j>\ve\}} - 2\,c_1\,t\,\ve^{-1/2}-U_t.$$
The facts that $\E[|U_t|]<\infty$ and $\E[\sum_{j\in\N} Y_j\,\mathbf{1}_{\{Y_j>\ve\}} ]=\frac{t}{2}\int_\ve^\infty x\,\bn(\dd x) <\infty$ imply
\begin{equation}\label{meanRepf}
\E[|R_\ve|]<\infty.
\end{equation}
Recall that $\E[e^{\II u U_t}]= e^{-t\Psi(u)}$, where $\Psi(u)=c_0 |u|^{3/2}\,(1+\II\,\mathrm{sgn}(u))$, with $c_0=1/\sqrt{3}$.
Then 
\begin{equation}
\label{tech}
\E[R_\ve\,e^{\II u U_t}]= \E\Big[\Big(\sum_{j\in\N} Y_j\,\mathbf{1}_{\{Y_j>\ve\}}\Big)e^{\II u U_t}\Big] -2\,c_1\,t\,\ve^{-1/2}\,e^{-t\Psi(u)} - \II \,t\Psi'(u)\,e^{-t\Psi(u)},
\end{equation}
because
$$\E[U_t\,e^{\II u U_t}]= -\II\,\frac{\dd}{\dd u} \E[e^{\II u U_t}]= \II \,t\Psi'(u)\,e^{-t\Psi(u)}.$$
By a classical formula for Poisson measures (Mecke's formula, cf. Theorem 4.1 in \cite{LP}), we have
$$\E\Big[\Big(\sum_{j\in\N} Y_j\,\mathbf{1}_{\{Y_j>\ve\}}\Big)e^{\II u U_t}\Big]=c_1 t\int_\ve^\infty e^{\II uz}\,\frac{\dd z}{z^{3/2}} \times e^{-t\Psi(u)}.$$
Now note that
$$\int_\ve^\infty e^{\II uz}\,\frac{\dd z}{z^{3/2}} = 2\,\ve^{-1/2} - \int_\ve^\infty (1-e^{\II uz})\,\frac{\dd z}{z^{3/2}},$$
and
\begin{align*}
-\int_\ve^\infty (1-e^{\II uz})\,\frac{\dd z}{z^{3/2}}&=-\int_0^\infty (1-e^{\II uz})\,\frac{\dd z}{z^{3/2}} + \int_0^\ve (1-e^{\II uz})\,\frac{\dd z}{z^{3/2}}\\
&=-\sqrt{2\pi}(1-\II\,\mathrm{sgn}(u))\,|u|^{1/2} + \int_0^\ve (1-e^{\II uz})\,\frac{\dd z}{z^{3/2}}.
\end{align*}
On the other hand, since $\Psi'(u)=\frac{3}{2} c_0|u|^{1/2}(1+\II\,\mathrm{sgn}(u))\times\mathrm{sgn}(u)=\frac{3}{2} c_0|u|^{1/2}(\II+\mathrm{sgn}(u))$, we have
$$\II \,t\Psi'(u)=\frac{3}{2} c_0\,t |u|^{1/2}(-1+\II\,\mathrm{sgn}(u))=-c_1\,t\,\sqrt{2\pi} |u|^{1/2}\,(1-\II\,\mathrm{sgn}(u)).$$
By substituting the preceding calculations in \eqref{tech}, we get after simplifications 
\begin{equation}
\label{E1}
\E[R_\ve\,e^{\II u U_t}]= c_1\,t \Big(\int_0^\ve (1-e^{\II uz})\,\frac{\dd z}{z^{3/2}}\Big) \,e^{-t\Psi(u)}.
\end{equation}

Let $\varphi_\ve(x)=\E[R_\ve\mid U_t=x]$ for $x\in\R$. Use \eqref{meanRepf} to see that
\begin{equation}
\label{integra1}
\int_\R |\varphi_\ve(x)|\,p_t(x)\,\dd x\leq \int_\R \E[|R_\ve|\,|\,U_t=x]\,p_t(x)\,\dd x= \E[|R_\ve|] <\infty.
\end{equation}
We have
\begin{equation}
\label{E2}
\E[R_\ve e^{\II uU_t}]=\E[\E[R_\ve\mid U_t]\,e^{\II uU_t}]=\int_\R \varphi_\ve(x)\,p_t(x)\,e^{\II ux}\,\dd x.
\end{equation}
On the other hand, for $0<\delta<\ve$, we can write
$$e^{-t\Psi(u)}\int_\delta^\ve e^{\II uz}\,\frac{\dd z}{z^{3/2}}
=\int_\delta^\ve \Big(\int_\R p_t(x)\,e^{\II ux}\,\dd x\Big)\,e^{\II uz}\,\frac{\dd z}{z^{3/2}}
=\int_\delta^\ve \Big(\int_\R p_t(x-z)\,e^{\II ux}\,\dd x\Big)\,\frac{\dd z}{z^{3/2}},
$$
and
$$ e^{-t\Psi(u)}\!\!\int_\delta^\ve(1-e^{\II uz})\frac{\dd z}{z^{3/2}}
=\int_\delta^\ve \Big(\int_\R (p_t(x)-p_t(x-z))e^{\II ux}\,\dd x\Big)\frac{\dd z}{z^{3/2}}
=\int_\R \Big(\int_\delta^\ve (p_t(x)-p_t(x-z))\frac{\dd z}{z^{3/2}}\Big)e^{\II ux} \dd x.
$$
The last display remains valid for $\delta=0$ as we now show. By dominated convergence to justify the passage to
the limit $\delta\to 0$, it suffices to show 
\begin{equation}
\label{integra2}
\int_\R \Big(\int_0^\ve |p_t(x)-p_t(x-z)|\,\frac{\dd z}{z^{3/2}}\Big) \dd x <\infty.
\end{equation}
For this, use the fact that $x\mapsto p_t(x)$ is unimodal (see Section~\ref{subsec:bridge}) to observe that for $K$ large, for $x\geq K$, and $0\leq z\leq \ve$,
one has $|p_t(x)-p_t(x-z)|= p_t(x-z)-p_t(x)$ and thus 
$$\int_{[K,\infty)} \Big(\int_0^\ve |p_t(x)-p_t(x-z)|\,\frac{\dd z}{z^{3/2}}\Big) \dd x 
= \int_0^\ve \Big(\int_{[K-z,K]} p_t(x)\,\dd x\Big)\frac{\dd z}{z^{3/2}} <\infty$$
because $p_t$ is bounded, argue similarly for $x\leq -K$, and  use the bound $|p_t(x)-p_t(x-z)|\leq Cz$
when 
$-K\leq x\leq K$, where $C$ is a bound for $|p'_t|$. So we have shown \eqref{integra2}, and therefore,
\begin{equation}
\label{E3}
e^{-t\Psi(u)}\int_0^\ve (1-e^{\II uz})\frac{\dd z}{z^{3/2}}=\int_\R \Big(\int_0^\ve (p_t(x)-p_t(x-z))\,\frac{\dd z}{z^{3/2}}\Big)e^{\II ux} \, \dd x.
\end{equation}

From \eqref{E2},\eqref{E1}, and then \eqref{E3}, we get
\begin{equation}
\label{Fourier-equal}
\int_\R \varphi_\ve(x)\,p_t(x)\,e^{\II ux}\,\dd x
= c_1\,t\,\int_\R \Big(\int_0^\ve (p_t(x)-p_t(x-z))\,\frac{\dd z}{z^{3/2}}\Big)e^{\II ux} \, \dd x.
\end{equation}
Observe that both functions $x\mapsto p_t(x)\,\varphi_\ve(x)$ and 
$$x\mapsto \int_0^\ve (p_t(x)-p_t(x-z))\,\frac{\dd z}{z^{3/2}}$$
are integrable with respect to Lebesgue measure and continuous. For the second function, we use \eqref{integra2}  for integrability, and for continuity we apply 
the dominated convergence theorem (with the bound $|p_t(x)-p_t(x-z)|\leq C\,z$). For the first one, we
use \eqref{integra1} for integrability, but we have to check that $\varphi_\ve$
is continuous. This is however easy thanks to the the absolute continuity relation between the L\'evy bridge and
the L\'evy process. In fact, write $t_j$ for the time at which the jump $Y_j$ occurs. Then, we have from \eqref{RNF} that 
$$\E\Bigg[\sum_{j\in\N, t_j\in[0,t/2]} Y_j\mathbf{1}_{\{Y_j>\ve\}}\,\Bigg|\,U_t=x\Bigg]
= \E\Bigg[\Bigg(\sum_{j\in\N, t_j\in[0,t/2]} Y_j\mathbf{1}_{\{Y_j>\ve\}}\Bigg)\frac{p_{t/2}(x-U_{t/2})}{p_t(x)}\Bigg],$$
where the right-hand side is clearly a continuous function of $x$. A time-reversal argument shows the same conclusion if we instead take $t_j\in[t/2,t]$, and the desired continuity property of $\varphi_\ve$ follows. 

From \eqref{Fourier-equal} and the above regularity, we conclude that, for every $x\in\R$,
$$\varphi_\ve(x)=c_1\,t\,\frac{1}{p_t(x)}\int_0^\ve (p_t(x)-p_t(x-z))\,\frac{\dd z}{z^{3/2}}.$$
Therefore, from the definition of $R_\ve$ we have
\begin{equation}
\label{E4}\E\Big[\sum_{j\in \N} Y_j\,\mathbf{1}_{\{Y_j>\ve\}}\,\Big|\,U_t=y\Big]
= y+2\,c_1\,t\,\ve^{-1/2}+ c_1\,t\,\frac{1}{p_t(y)}\int_0^\ve (p_t(y)-p_t(y-z))\,\frac{\dd z}{z^{3/2}}.
\end{equation}
The facts that $\lim_{x\to0}\gamma(x)=0$ and $\gamma'$ is continuous on $[0,\infty)$ (i.e., \eqref{gam0} and \eqref{gamma'cont})  imply
$$\sum_{j\in\N} Y_j\,\gamma\Bigl(\frac{3Y_j}{2h^2}\Bigr)=\sum_{j\in\N} Y_j\int_0^{\frac{3Y_j}{2h^2}} \gamma'(u)\,\dd u
=\int_0^\infty \Big(\sum_{j\in\N} Y_j\,\mathbf{1}_{\{Y_j>2h^2u/3\}}\Big)\,\gamma'(u)\,\dd u,$$
where the interchange between summation and integration holds by the bound $|\gamma'(u)|\leq C\,u$ 
and the fact that $\P(\sum_{j\in\N} Y_j^2<\infty|U_t=y)=1$.
(The latter again holds for our fixed value of $y$ by the usual Radon-Nikodym argument.) 
From the last display, we get
\begin{equation}
\label{F3}
\E\Big[\sum_{j\in \N} Y_j\,\gamma\Bigl(\frac{3Y_j}{2h^2}\Bigr)\,\Big|\,U_t=y\Big]
= \int_0^\infty \E\Big[\sum_{j\in\N} Y_j\,\mathbf{1}_{\{Y_j>2h^2u/3\}}\,\Big|\, U_t=y\Big]\,\gamma'(u)\,\dd u,
\end{equation}
where now the interchange between expectation and Lebesgue integration is justified by the fact that
$$\int_0^\infty  \E\Big[\sum_{j\in\N} Y_j\,\mathbf{1}_{\{Y_j>2h^2u/3\}}\,\Big|\, U_t=y\Big]\,|\gamma'(u)|\,\dd u<\infty. $$
This holds thanks to \eqref{E4}, the fact that $\int_0^\infty |\gamma'(u)|\,(1\vee u ^{-1/2})\,\dd u<\infty$ (by \eqref{intgam'}), and
\begin{equation}\label{ptincint}
\int_0^\infty |p_t(y)-p_t(y-z)|\,z^{-3/2}\,\dd z<\infty,
\end{equation}
where the last follows from the boundedness of $|p'_t|$.
It follows from \eqref{F3} and \eqref{E4} that
\begin{equation}
\label{F4}\E\Big[\sum_{j\in \N} Y_j\,\gamma\Bigl(\frac{3Y_j}{2h^2}\Bigr)\,\Big|\,U_t=y\Big]=2y + 
\frac{c_1t}{p_t(y)}\int_0^\infty\Bigg(\int_0^{2h^2u/3} (p_t(y)-p_t(y-z))\,\frac{\dd z}{z^{3/2}}\Bigg)\gamma'(u)\dd u,
\end{equation}
where we used the equalities
$$\int_0^\infty \gamma'(u)\,\dd u= \lim_{K\to\infty}(\gamma(K)-\gamma(1/K))=2$$
(by \eqref{gaminfty} and \eqref{gam0}), and 
$$\int_0^\infty \frac{\gamma'(u)}{\sqrt{u}}\,\dd u=0.$$
To get the last equality, first note that 
$$\frac{\gamma'(u)}{\sqrt{u}}= -\frac{8}{3}\sqrt{\pi}\,\Big(\frac{3}{2}(\chi(u)+u\chi'(u))+ u(2\chi'(u)+u\chi''(u))\Big)=  -\frac{8}{3}\sqrt{\pi}\,\frac{\dd }{\dd u}
\Bigl(\frac{3}{2}
u\chi(u)+u^2\chi'(u)\Bigr),$$
and then apply the asymptotics for $\chi$ and $\chi'$ from Section~\ref{sec:casepositive}, namely \eqref{chiinfty} and \eqref{chi'infty}.
Finally, by $\gamma(K)\to 2$ as $K\to\infty$ (by \eqref{gaminfty} again), we have
$$\int_0^\infty\Bigg(\int_0^{2h^2u/3} (p_t(y)-p_t(y-z))\,\frac{\dd z}{z^{3/2}}\Bigg)\gamma'(u)\dd u
=\int_0^{\infty} (p_t(y)-p_t(y-z))\Bigl(2-\gamma\Bigl(\frac{3z}{2h^2}\Bigr)\Bigr)\,\frac{\dd z}{z^{3/2}}.$$
(The interchange of integrals is justified by \eqref{ptincint} and \eqref{intgam'}.)
Insert this into the right-hand side of \eqref{F4}, and then recall
 \eqref{key-equa} and \eqref{F2}, to obtain the explicit formula
\begin{align*}
\E[\dot L^{a+h}-\dot L^a\mid L^a=t,\frac{1}{2}\dot L^a=y]
&= \frac{c_1t}{p_t(y)}\,\int_0^{\infty} (p_t(y)-p_t(y-z))\Bigl(2-\gamma\Bigl(\frac{3z}{2h^2}\Bigr)\Bigr)\,\frac{\dd z}{z^{3/2}}\\
&= \frac{1}{h}\,\frac{c_1t}{p_t(y)}\,\int_0^{\infty} (p_t(y)-p_t(y-h^2z))\Bigl(2-\gamma\Bigl(\frac{3z}{2}\Bigr)\Bigr)\,\frac{\dd z}{z^{3/2}}.
\end{align*}
This gives the first part of the proposition. 
The second part is then immediate from the following elementary lemma.
\qed

Recall the definition of the function $g$ in Theorem \ref{main-th}.

\begin{lemma}\label{lem:gconv}
There is a function $\delta(h)\to 0$ as $h\to 0$, so that for any $K\in\N$ and some constant $C(K)$,
\[\sup_{K^{-1}\leq t\leq K,|y|\le K}\Bigl|\frac{1}{h}g_{h}(t,y)-g(t,y)\Bigr|\le C(K)\delta(h).\]
\end{lemma}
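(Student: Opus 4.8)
The plan is to expand $p_t$ to first order inside the integral defining $g_h$. After the change of variable $z\mapsto h^2z$ in \eqref{def-gh},
\[
\frac{1}{h}\,g_{h}(t,y)=\frac{c_1 t}{p_t(y)}\int_0^\infty \frac{p_t(y)-p_t(y-h^2z)}{h^2}\,\Bigl(2-\gamma\bigl(\tfrac{3z}{2}\bigr)\Bigr)\,\frac{\dd z}{z^{3/2}},
\]
and since $\bigl(p_t(y)-p_t(y-h^2z)\bigr)/h^2\to z\,p'_t(y)$ one expects the right side to converge to $\frac{c_1 t}{p_t(y)}\,p'_t(y)\int_0^\infty\bigl(2-\gamma(3z/2)\bigr)z^{-1/2}\dd z$. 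So the first task is the constant identity
\[
c_1\int_0^\infty \Bigl(2-\gamma\bigl(\tfrac{3z}{2}\bigr)\Bigr)\,\frac{\dd z}{z^{1/2}}=8,
\]
which is exactly what makes this limit equal $g(t,y)=8t\,p'_t(y)/p_t(y)$. Setting $w=3z/2$ reduces it to $\int_0^\infty(2-\gamma(w))\,w^{-1/2}\dd w=16\sqrt{\pi}$. Using $2-\gamma(w)=\int_w^\infty\gamma'(u)\,\dd u$ (valid by \eqref{gaminfty} and $\int_0^\infty|\gamma'|<\infty$, cf.\ \eqref{intgam'}) together with Fubini, this integral equals $2\int_0^\infty\sqrt{u}\,\gamma'(u)\,\dd u$. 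By the elementary identity $\gamma'(u)/\sqrt{u}=-\tfrac{8}{3}\sqrt{\pi}\,\tfrac{\dd}{\dd u}\bigl(\tfrac{3}{2} u\chi(u)+u^2\chi'(u)\bigr)$ (differentiate the formula for $\gamma$ in Proposition \ref{expected-derivative}), an integration by parts — the boundary terms vanishing at $\infty$ by \eqref{chiinfty} and \eqref{chi'infty}, and at $0$ because $\chi(0)=0$ — turns $\int_0^\infty\sqrt{u}\,\gamma'(u)\,\dd u$ into $\tfrac{8}{3}\sqrt{\pi}\int_0^\infty\bigl(\tfrac{3}{2} u\chi(u)+u^2\chi'(u)\bigr)\dd u$. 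The last integral is computed by letting $\lambda\downarrow0$ in its Laplace transform: with $\hat\chi(\lambda)=\int_0^\infty\chi(u)e^{-\lambda u}\dd u=(1+\sqrt{\lambda})^{-3}$ (Remark after Proposition \ref{expected-derivative}) and $\chi(0)=0$ one gets $\int_0^\infty\bigl(\tfrac{3}{2} u\chi(u)+u^2\chi'(u)\bigr)e^{-\lambda u}\dd u=\tfrac{1}{2}\hat\chi'(\lambda)+\lambda\hat\chi''(\lambda)=3(1+\sqrt{\lambda})^{-5}\to3$. Hence $\int_0^\infty\sqrt{u}\,\gamma'(u)\,\dd u=8\sqrt{\pi}$, and the displayed identity follows after tracking the value $c_1=\sqrt{3/8\pi}$.

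Granting this identity, write
\[
\frac{1}{h}\,g_{h}(t,y)-g(t,y)=\frac{c_1 t}{p_t(y)}\int_0^\infty\Bigl(\frac{p_t(y)-p_t(y-h^2z)}{h^2}-z\,p'_t(y)\Bigr)\Bigl(2-\gamma\bigl(\tfrac{3z}{2}\bigr)\Bigr)\frac{\dd z}{z^{3/2}},
\]
and split the integral at $z=h^{-2}$. From \eqref{gam0}, \eqref{gaminfty} and \eqref{gamma'cont} one has $|2-\gamma(3z/2)|\le C(1\wedge z^{-1})$, hence $|2-\gamma(3z/2)|\,z^{-3/2}\le C(z^{-3/2}\wedge z^{-5/2})$. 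On $\{z\le h^{-2}\}$ we have $h^2z\le1$, so Taylor's theorem gives $\bigl|\tfrac{p_t(y)-p_t(y-h^2z)}{h^2}-z\,p'_t(y)\bigr|\le\tfrac{1}{2} h^2 z^2\,M_2$ with $M_2=\sup\{|p''_t(\eta)|:t\in[K^{-1},K],|\eta|\le K+1\}$; integrating against $C(z^{-3/2}\wedge z^{-5/2})$ over $(0,h^{-2}]$ yields $\lesssim M_2\,h^2\bigl(\int_0^1 z^{1/2}\dd z+\int_1^{h^{-2}}z^{-1/2}\dd z\bigr)=O(M_2 h)$. On $\{z>h^{-2}\}$ bound the difference crudely by $2h^{-2}M_0+zM_1$, where $M_0=\sup_{t\in[K^{-1},K]}\|p_t\|_\infty$ and $M_1=\sup_{t\in[K^{-1},K]}\|p'_t\|_\infty$; since $z^{-3/2}\wedge z^{-5/2}=z^{-5/2}$ there, $\int_{h^{-2}}^\infty(2h^{-2}M_0+zM_1)z^{-5/2}\dd z=\tfrac{4}{3} M_0 h+2M_1 h$. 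Finally $M_0,M_1,M_2<\infty$ and are bounded for $t\in[K^{-1},K]$ via the scaling $p_t(x)=t^{-2/3}p_1(xt^{-2/3})$ and the boundedness of $p_1,p'_1,p''_1$, while $\inf\{p_t(y):t\in[K^{-1},K],|y|\le K\}=m(K)>0$ by strict positivity and joint continuity of $(t,y)\mapsto p_t(y)$. Collecting, $\sup_{K^{-1}\le t\le K,|y|\le K}\bigl|\tfrac{1}{h} g_h(t,y)-g(t,y)\bigr|\le C(K)\,h$ for $h\le1$, so the lemma holds with $\delta(h)=h\wedge1$.

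The main obstacle is the uniform bookkeeping in the second step: one must pick the splitting level ($z=h^{-2}$ here) so that the Taylor remainder on the inner region and the crude bound on the tail integrate to the same power of $h$, and one must keep all constants in a form that survives the supremum over the compact box $[K^{-1},K]\times[-K,K]$ — this is where the scaling identity for $p_t$ and the strict positivity of $p_t$ enter. The constant identity of the first step, though it needs the Laplace-transform computation, is otherwise routine once the asymptotics of $\chi$ and $\gamma$ recorded in Section \ref{sec:casepositive} are in hand.
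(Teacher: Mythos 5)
Your proof is correct, and the overall architecture is the same as the paper's: rewrite $\tfrac{1}{h}g_h(t,y)$ after the change of variable, identify the constant $c_1\int_0^\infty(2-\gamma(3z/2))z^{-1/2}\dd z=8$ that turns the naive limit into $g(t,y)$, and then bound the difference by a Taylor-remainder integral that is controlled uniformly on the box $[K^{-1},K]\times[-K,K]$ via the scaling $p_t(x)=t^{-2/3}p_1(xt^{-2/3})$, the boundedness of $p_1',p_1''$, and the lower bound on $p_t(y)$. You differ in two places, both to your credit. First, the paper states the identity $c_1\int_0^\infty(2-\gamma(3z/2))z^{-1/2}\dd z=8$ as a ``tedious but straightforward calculation, left for the reader,'' whereas you verify it cleanly by writing $2-\gamma=\int\gamma'$, using Fubini and the exact derivative formula for $\gamma'/\sqrt{u}$ already established in the proof of Proposition \ref{increment-derivative}, integrating by parts (the boundary terms vanish by \eqref{chiinfty}, \eqref{chi'infty} and $\chi(0)=0$), and then evaluating $\int_0^\infty(\tfrac32 u\chi+u^2\chi')\dd u$ by letting $\lambda\downarrow 0$ in the Laplace transform $\hat\chi(\lambda)=(1+\sqrt\lambda)^{-3}$ to get $\tfrac12\hat\chi'(\lambda)+\lambda\hat\chi''(\lambda)=3(1+\sqrt\lambda)^{-5}\to 3$; this is a slicker route than direct antidifferentiation. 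Second, for the remainder the paper bounds the integrand by $C(K)\bigl((h^2z)\wedge 1\bigr)\lvert 2-\gamma(3z/2)\rvert z^{-1/2}$ and invokes dominated convergence, getting only $\delta(h)\to 0$, while you split the $z$-integral at $z=h^{-2}$, use second-order Taylor on the inner piece and the crude triangle-inequality bound on the tail, and thereby obtain the explicit rate $\delta(h)=O(h)$; your estimates are correct (note the restriction $\lvert\eta\rvert\le K+1$ for $M_2$ is harmless but unnecessary, since $p_1''$ is globally bounded, so the paper's $\Vert p_t''\Vert_\infty$ would do). This quantitative $\delta(h)=h\wedge 1$ is a strict improvement, though of course the lemma only requires $\delta(h)\to 0$.
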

\begin{proof} Note that
\begin{equation}\label{mvt}
\frac{1}{h}g_{h}(t,y)=\frac{c_1t}{p_t(y)}\,\int_0^{\infty} \frac{p_t(y)-p_t(y-h^2z)}{h^2 z}\Big(2-\gamma\Bigl(\frac{3z}{2}\Bigr)\Big)\,\frac{\dd z}{\sqrt z},
\end{equation} 
while \eqref{gaminfty} and \eqref{gam0} imply that
\begin{equation}\label{domint}
\int_0^\infty |2-\gamma\Bigl(\frac{3z}{2}\Bigr)|\,\frac{\dd z}{\sqrt z}<\infty.
\end{equation}
A tedious but straightforward calculation, left for the reader, gives
$$c_1\int_0^\infty \Bigl(2-\gamma\Bigl(\frac{3z}{2}\Bigr)\Bigr)\,\frac{\dd z}{\sqrt{z}}=8.$$
Using the above in \eqref{mvt}, we conclude that
\begin{equation}\label{incbnd1}
\Bigl|\frac{1}{h}g_{h}(t,y)-g(t,y)\Bigr|\le \frac{c_1t}{p_t(y)}\int_0^\infty\Bigl|\frac{p_t(y)-p_t(y-h^2z)}{h^2 z}-p'_t(y)\Bigr|\,|2-\gamma\Bigl(\frac{3z}{2}\Bigr)|\,\frac{\dd z}{\sqrt z}.
\end{equation}
The mean value theorem implies that 
\[\Bigl|\frac{p_t(y)-p_t(y-h^2z)}{h^2 z}-p'_t(y)\Bigr|\le (\Vert p''_t\Vert_\infty h^2 z)\wedge (2\Vert p'_t\Vert_\infty).\]
The boundedness of $|p'_1|$ and $|p''_1|$ (Section 2 of \cite{Zol}) and scaling imply that $\Vert p'_t\Vert_\infty\le c t^{-4/3}$ and $\Vert p''_t\Vert_\infty\le c t^{-2}$. 
Moreover, $p_t(y)$ is bounded below by a positive constant (depending on $K$) when $1/K\leq t\leq K$ and $|y|\leq K$.
 Now use the above bounds in \eqref{incbnd1} to bound the right-hand side of \eqref{incbnd1}, and hence also the left-hand side, for $|y|\le K$ and $1/K\leq t\leq K$ by
\[C(K)\int_0^\infty((h^2z)\wedge 1)|2-\gamma\Bigl(\frac{3z}{2}\Bigr)|\,\frac{\dd z}{\sqrt z}.\]
To complete the proof, define $\delta(h)$ to be the above integral and use \eqref{domint} to see that $\delta(h)\to 0$ as $h\to 0$ by dominated convergence.
\end{proof}

\section{A stochastic differential equation}
\label{sec:SDE}

In this section, we derive the stochastic differential equation satisfied by the process $(L^x,\dot L^x)_{x\geq 0}$.
Recall that for every $t>0$ and $y\in \R$,
$$g(t,y):=8\,t\, \frac{p'_t(y)}{p_t(y)},$$
and $g(0,y)=0$ for every $y\in\R$. Recall also the notation $R$ for the supremum of the support of $\mathbf{Y}$.
By \eqref{Lsupp},  we have $R=\inf\{x\geq 0:L^x=0\}$. 

\begin{lemma}
\label{tech-SDE}
We have $\displaystyle \int_0^R |g(L^x,\frac{1}{2}\dot L^x)|\,\mathbf{1}_{\{g(L^x,\frac{1}{2}\dot L^x)<0\}}\,\dd x <\infty$ a.s.
\end{lemma}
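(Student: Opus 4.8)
The plan is to reduce everything to the scaling form of $g$. From the stated identity $p_t(x)=t^{-2/3}p_1(xt^{-2/3})$ one gets $p_t'(x)=t^{-4/3}p_1'(xt^{-2/3})$, and hence, for $t>0$,
$$g(t,y)=8t\,\frac{p_t'(y)}{p_t(y)}=8\,t^{1/3}\,\frac{p_1'}{p_1}\Bigl(\frac{y}{t^{2/3}}\Bigr).$$
Since $t^{1/3}>0$ and $p_1>0$, the sign of $g(t,y)$ is that of $p_1'(y\,t^{-2/3})$. By the unimodality of $U_1$ recalled in Section~\ref{subsec:bridge}, there is $m_0\in\R$ such that $p_1$ is nondecreasing on $(-\infty,m_0]$, so $p_1'\ge 0$ there; consequently $g(t,y)<0$ forces $y\,t^{-2/3}>m_0$.

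First I would combine this with the bound \eqref{gbnd1} applied at $y_0=m_0$: on $\{y\,t^{-2/3}>m_0\}$ we have $|(p_1'/p_1)(y\,t^{-2/3})|\le C(m_0)$. Specializing to $t=L^x$ and $y=\tfrac12\dot L^x$ (legitimate for $x\in[0,R)$, where $L^x>0$ by \eqref{Lsupp}, and trivial at $x=R$ where $g(0,\cdot)=0$), this yields the pointwise estimate
$$\bigl|g\bigl(L^x,\tfrac12\dot L^x\bigr)\bigr|\,\mathbf{1}_{\{g(L^x,\frac12\dot L^x)<0\}}\le 8\,C(m_0)\,(L^x)^{1/3},\qquad 0\le x\le R.$$

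It then remains only to integrate. Since $a\mapsto L^a$ is continuous (Sugitani \cite{Sug}) and $R=\inf\{x\ge 0:L^x=0\}<\infty$ a.s.\ by \eqref{Lsupp}, the local time $L$ is bounded on the compact interval $[0,R]$, so
$$\int_0^R \bigl|g\bigl(L^x,\tfrac12\dot L^x\bigr)\bigr|\,\mathbf{1}_{\{g(L^x,\frac12\dot L^x)<0\}}\,\dd x\le 8\,C(m_0)\,R\,\Bigl(\sup_{0\le x\le R}L^x\Bigr)^{1/3}<\infty\quad\text{a.s.}$$
I do not expect a genuine obstacle here: the whole point is that on $\{g<0\}$ one stays in the region where $p_1'/p_1$ is bounded, leaving only the harmless factor $t^{1/3}=(L^x)^{1/3}$. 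The real difficulty — that the \emph{positive} part of $g(L^x,\tfrac12\dot L^x)$ blows up like $(\dot L^x)^2/L^x$ as $x\uparrow R$ — is deliberately not touched by this lemma and is handled later through the analysis of the solution of \eqref{main-SDE} near $R$.
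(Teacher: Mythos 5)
Your proof is correct and takes essentially the same route as the paper's: rewrite $g$ via scaling as $8t^{1/3}(p_1'/p_1)(y\,t^{-2/3})$, use unimodality of $p_1$ to see that $g<0$ forces the argument of $p_1'/p_1$ above a fixed threshold, apply \eqref{gbnd1} to bound $|p_1'/p_1|$ there, and conclude with the trivial bound $\int_0^R (L^x)^{1/3}\,\dd x<\infty$. The only cosmetic difference is that you spell out the final integration by bounding $L$ on $[0,R]$, whereas the paper just states the finiteness directly.
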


\proof By scaling, we have, for every $t>0$ and $y\in \R$,
\begin{equation}\label{gformu}
g(t,y)=8\,t\, \frac{p'_t(y)}{p_t(y)}= 8\,t^{1/3}\,\frac{p'_1(yt^{-2/3})}{p_1(yt^{-2/3})}.
\end{equation}
The unimodality of the function $p_1$ (Theorem 2.7.5 of \cite{Zol}) shows there is a constant $y_0\in\R$
such that $p'_1(y)\geq 0$ for every $y\leq y_0$.  
Recall from \eqref{gbnd1} that  $|p'_1(y)/p_1(y)|$ is bounded above by
a constant $C$ when $y\geq y_0$.
Hence, if $g(t,y)<0$ (forcing $p'_1(yt^{-2/3})<0$
and thus $yt^{-2/3}> y_0$), we obtain from the above that $|g(t,y)|\leq 8C\,t^{1/3}$. Finally, we get
$$\int_0^R |g(L^x,\frac{1}{2}\dot L^x)|\,\mathbf{1}_{\{g(L^x,\frac{1}{2}\dot L^x)<0\}}\,\dd x
\leq \int_0^R 8C\, (L^x)^{1/3}\,\dd x<\infty\ \ a.s.,$$
which completes the proof. \endproof

%
We now turn to the proof of our main result. 

\proof[Proof of Theorem \ref{main-th}]
Let $n\in\N$. 
By Proposition \ref{increment-derivative} (and the known  Markov property of $(L^x,\dot L^x)_{x\geq 0}$), we have for every $u\geq 0$, 
\begin{equation}
\label{Markov1}
\E[\,\dot L^{u+\frac{1}{n}}-\dot L^u\mid (L^r,\dot L^r)_{r\leq u}]
= \E[\,\dot L^{u+\frac{1}{n}}-\dot L^u\mid L^u,\dot L^u]= g_{1/n}(L^u,\frac{1}{2}\dot L^u)\quad\hbox{a.s.}
\end{equation}
Note that the equality of the last display is trivial on the event $\{L^u=0\}=\{u\geq R\}$.

For every real $K>1$, set
\begin{equation}\label{TKdef}T_K:=\inf\{x\geq 0: L^x\vee |\dot L^x| \geq K \hbox{ or }L^x\leq 1/K\},
\end{equation}
and for  every real $a\geq 0$, let $[a]_n$ be the largest number of the 
form $j/n$, $j\in\Z$, smaller than or equal to $a$. Fix $0<s<t$, and let $f$ be
a bounded continuous function on $[0,\infty)\times \R$. We evaluate
$$\mathcal{R}^K_n(s,t):=
\E\Bigg[\Bigg( \dot L^{[t]_n\wedge T_K}- \dot L^{[s]_n\wedge T_K}- \sum_{j=0}^{n[t]_n-n[s]_n-1} \mathbf{1}_{\{[s]_n+\frac{j}{n}< T_K\}}
g_{1/n}\Bigl(L^{[s]_n+j/n},\frac{1}{2}\dot L^{[s]_n+j/n})\Bigr)\Bigg) f(L^{[s]_n},\dot L^{[s]_n})\Bigg],$$
where $g_{1/n}$ is defined in \eqref{def-gh}.
To this end, we observe that
\begin{equation}
\label{tech1} \dot L^{[t]_n\wedge T_K}- \dot L^{[s]_n\wedge T_K}= \sum_{j=0}^{n[t]_n-n[s]_n-1} \mathbf{1}_{\{[s]_n+\frac{j}{n}< T_K\}}
\Big(\dot L^{[s]_n+\frac{j+1}{n}}-\dot L^{[s]_n+\frac{j}{n}}\Big)
-\mathbf{1}_{\{[s]_n\leq T_K<[t]_n\}} \Big(\dot L^{[s]_n+\frac{j_n}{n}}-\dot L^{T_K}\Big)
\end{equation}
where $j_n=\inf\{j\in\Z_+:[s]_n+\frac{j}{n}\geq T_K\}$. Note that, on the event $\{[s]_n\leq T_K<[t]_n\}$, we
have $0\leq [s]_n+\frac{j_n}{n}-T_K\leq \frac{1}{n}$. 

Thanks to \eqref{tech1}, we can rewrite the definition of $\mathcal{R}^K_n(s,t)$ in the form
\begin{align}
\label{tech00}
\mathcal{R}^K_n(s,t)=&\sum_{j=0}^{n[t]_n-n[s]_n-1}\E\Big[\mathbf{1}_{\{[s]_n+\frac{j}{n}< T_K\}}
\Big(\dot L^{[s]_n+\frac{j+1}{n}}-\dot L^{[s]_n+\frac{j}{n}}-g_{1/n}\Bigl(L^{[s]_n+j/n},\frac{1}{2}\dot L^{[s]_n+j/n}\Bigr)\Big)f(L^{[s]_n},\dot L^{[s]_n})\Big]\nonumber\\
&-\E\Big[ \mathbf{1}_{\{[s]_n\leq T_K<[t]_n\}} \Big(\dot L^{[s]_n+\frac{j_n}{n}}-\dot L^{T_K}\Big) f(L^{[s]_n},\dot L^{[s]_n})\Big] .
\end{align}
For every $0\leq j\leq n[t]_n-n[s]_n-1$, \eqref{Markov1} gives
$$\E\Big[ \dot L^{[s]_n+\frac{j+1}{n}}-\dot L^{[s]_n+\frac{j}{n}}\,\Big|\,(L^r,\dot L^r)_{r\leq [s]_n+\frac{j}{n}}\Big]
= g_{1/n}\Bigl(L^{[s]_n+\frac{j}{n}},\frac{1}{2}\dot L^{[s]_n+\frac{j}{n}}\Bigr),$$
so that
$$\E\Big[\Big(\dot L^{[s]_n+\frac{j+1}{n}}-\dot L^{[s]_n+\frac{j}{n}}- g_{1/n}\Bigl(L^{[s]_n+\frac{j}{n}},\frac{1}{2}\dot L^{[s]_n+\frac{j}{n}}\Bigr)\Big) 
\times \mathbf{1}_{\{[s]_n+\frac{j}{n}< T_K\}} f(L^{[s]_n},\dot L^{[s]_n})\Big]=0,$$
and thus, by \eqref{tech00},
$$\mathcal{R}^K_n(s,t) = - \E\Big[ \mathbf{1}_{\{[s]_n\leq T_K<[t]_n\}} \Big(\dot L^{[s]_n+\frac{j_n}{n}}-\dot L^{T_K}\Big) f(L^{[s]_n},\dot L^{[s]_n})\Big].$$
By Lemma \ref{moment-deri}(ii), we have
$$\E\Bigg[\sup_{s/2\leq x<y\leq t+1}\Big(\frac{|\dot L^y-\dot L^x|}{|y-x|^\beta}\Big)\Bigg] <\infty$$
where $\beta>0$. Provided that $n$ is sufficiently large so that $[s]_n>s/2$, we thus get
\begin{equation}
\label{tech2}|\mathcal{R}^K_n(s,t)|\leq C\,n^{-\beta},
\end{equation}
where $C$ is a constant. 
When $n\to\infty$, we have
\begin{equation}
\label{tech3}(\dot L^{[t]_n\wedge T_K},\dot L^{[s]_n\wedge T_K}) \build\la_{}^{\rm a.s.} (\dot L^{t\wedge T_K},\dot L^{s\wedge T_K}),
\end{equation}
and we claim that
\begin{equation}
\label{tech4}\sum_{j=0}^{n[t]_n-n[s]_n-1} \mathbf{1}_{\{[s]_n+\frac{j}{n}< T_K\}}
g_{1/n}\Bigl(L^{[s]_n+j/n},\frac{1}{2}\dot L^{[s]_n+j/n}\Bigr)
\build\la_{}^{\rm a.s.}
\int_{s\wedge T_K}^{t\wedge T_K} g\Bigl(L^u,\frac{1}{2}\dot L^u\Bigr)\,\dd u.
\end{equation}
To justify \eqref{tech4}, note that
$$\sum_{j=0}^{n[t]_n-n[s]_n-1} \mathbf{1}_{\{[s]_n+\frac{j}{n}< T_K\}}
g_{1/n}(L^{[s]_n+j/n},\frac{1}{2}\dot L^{[s]_n+j/n})=\int_{[s]_n}^{[t]_n} n\,g_{1/n}(L^{[r]_n},\frac{1}{2}\dot L^{[r]_n})\,\mathbf{1}_{\{[r]_n< T_K\}}\,\dd r.$$
Lemma~\ref{lem:gconv} implies that $$\lim_{n\to\infty}\sup_{r<T_K}|ng_{1/n}(L^{[r]_n},\frac{1}{2}\dot L^{[r]_n})-g(L^r,\frac{1}{2}\dot L^r)|=0,$$ and \eqref{tech4} now follows.

It follows from \eqref{tech2}, \eqref{tech3}, \eqref{tech4} and the definition of $\mathcal{R}^K_n(s,t)$ (justification is simple because stopping at time $T_K$ makes the dominated convergence theorem easy to apply) that
$$\E\Bigg[\Big(\dot L^{t\wedge T_K}-\dot L^{s\wedge T_K} - \int_{s\wedge T_K}^{t\wedge T_K} g(L^u,\frac{1}{2}\dot L^u)\,\dd u\Bigg)f(L^s,\dot L^s)\Bigg]=0.$$
We have assumed that $s>0$, but clearly we can pass to the limit $s\downarrow 0$ to derive the last display for  $s=0$.
Hence,
$$\dot L^{t\wedge T_K}-\dot L^{0} - \int_{0}^{t\wedge T_K} g(L^u,\frac{1}{2}\dot L^u)\,\dd u$$
is a martingale with respect to the filtration $\mathcal{F}^\circ_t:=\sigma\Big((L^r,\dot L^t)_{r\leq t}\Big)$. 

For $\ve\in(0,1)$, set $S_\ve=\inf\{r\geq 0: L^r\leq \ve\}$. We get that
$$M^\ve_t:=\dot L^{t\wedge S_\ve}-\dot L^{0} - \int_{0}^{t\wedge S_\ve} g(L^u,\frac{1}{2}\dot L^u)\,\dd u$$
is a local martingale (note that, if $R_K:=\inf\{x\geq 0:L^x\vee |\dot L^x|\geq K\}$, $M^\ve_{t\wedge R_K}$
is a martingale, and $R_K\uparrow \infty$ as $K\uparrow \infty$). 

We next claim the quadratic variation of $M^\ve$
is
\begin{equation}\label{mepsqv}\langle M^\ve,M^\ve\rangle_t= 16\int_0^{t\wedge S_\ve} L^r\,\dd r.
\end{equation}
To derive this from Proposition~\ref{quad-var}, first fix $t>0$ and let $\pi_n=\{0=t^n_0<t^n_1<\dots<t^n_{m_n}=t\}$ be a sequence of subdivisions of $[0,t]$ such that $\Vert\pi_n\Vert=\max_{1\le i\le m_n}(t^n_i-t^n_{i-1})\to 0$ as $n\to\infty$. 
If $X$ is a stochastic process let $Q(\pi_n,X)=\sum_{i=1}^{m_n}(X(t^n_i)-X(t^n_{i-1}))^2$. Then, taking limits in probability with respect to $\P(\cdot |S_\ve\ge t)$ we have,
\[\langle M^\ve,M^\ve\rangle_t=\lim_{n\to\infty}Q(\pi_n,M^\ve)=\lim_{n\to\infty}Q(\pi_n,\dot L)=16\int_0^tL^r\, \dd r,\]
where we use Proposition~\ref{quad-var} in the last equality, and the fact that $t\le S_\ve$ in the second equality.  
This shows that $\langle M^\ve,M^\ve\rangle_t=16\int_0^{t\wedge S_\ve}L^r\,\dd r$ a.s. on $\{t\le S_\ve\}$ (this conclusion is trivial if this latter set is null, so the implicit assumption above  that it is not null is justified).  By taking left limits through rational values, it follows that 
\[\langle M^\ve,M^\ve \rangle_t=16\int_0^{t\wedge S_\ve}L^r\, \dd r\ \text{ for every }t\le S_\ve\ \text{a.s.}\]
Since $\langle M^\ve,M^\ve \rangle_t$ is constant for $t\ge S_\ve$, \eqref{mepsqv} follows.

If we set
$$\wt B^\ve_t=\int_0^t \frac{1}{4\sqrt{L^r}}\,\dd M^\ve_r$$
then $\wt B^\ve$ is a local martingale with quadratic variation
\begin{equation}\label{Bveqv}\langle\wt B^\ve,\wt B^\ve\rangle_t= t\wedge S_\ve.
\end{equation}
In particular, $\wt B^\ve$ is a (true) martingale. Up to enlarging the
probability space, we can find a linear Brownian motion $B'$ with $B'_0=0$, 
which is independent of $\mathbf{X}$, and thus also of $(L^x,\dot L^x)_{x\in\R}$. 
We introduce the (completion of the) filtration $\f_t:=\f^\circ_t\vee \sigma(B'_r:0\leq r\leq t)$, so that
$\wt B^\ve$ remains a martingale in this filtration. If we set
$$B^\ve_t=\wt B^\ve_t + \int_{t\wedge S_\ve}^t \dd B'_s$$
then one immediately verifies that $B^\ve$ is a martingale of $(\f_t)_{t\geq 0}$ and 
$$\langle B^\ve,B^\ve\rangle_t= t.$$
Therefore $B^\ve$ is a linear Brownian motion. 

Next, suppose that $0<\ve'<\ve<1$. By construction, we have $\wt B^\ve_t=\wt B^{\ve'}_{t\wedge S_\ve}$. We can deduce from 
this that $\wt B^\ve_{S_\ve}$ converges in probability when $\ve\to 0$. Indeed, for every $t>0$,
$$\E[(\wt B^\ve_{S_\ve\wedge t}- \wt B^{\ve'}_{S_{\ve'}\wedge t})^2]= \E[(\wt B^{\ve'}_{S_\ve\wedge t}- \wt B^{\ve'}_{S_{\ve'}\wedge t})^2]
= \E[S_\ve\wedge t-S_{\ve'}\wedge t] \build\la_{\ve,\ve'\to 0, \ve'<\ve}^{} 0,$$
since we know that $S_\ve\uparrow R$ as $\ve\downarrow 0$. 
Let $\Gamma$ stand for the limit in probability of $\wt B^\ve_{S_\ve}$ when $\ve\to 0$. 

Define 
a process $\wt B^0$ by setting $\wt B^0_t=\wt B^\ve_t$ on the event $\{t<S_\ve\}$
(note this does not depend on the choice of $\ve$) and 
$\wt B^0_t=\Gamma$ on the event $\{t\geq R\}$. Finally set
$$B_t:= \wt B^0_{t\wedge R} + \int_{t\wedge R}^t \dd B'_s.$$
Then, it is straightforward to verify that $B^\ve_t$ converges in
probability to $B_t$ when $\ve\to 0$, for every $t\geq 0$ (on the event $\{t\geq R\}$
use the convergence in probability of $\wt B^\ve_{S_\ve}$ to $\Gamma=\wt B^0_R$). The process 
$(B_t)_{t\geq 0}$ has right-continuous sample paths and the same finite-dimensional
marginals as a linear Brownian motion, hence $(B_t)_{t\geq 0}$ is a linear
Brownian motion. More precisely, it is not hard to verify that $(B_t)_{t\geq 0}$ 
is an $(\f_t)$-Brownian motion. 

Next note that
$$M^\ve_t=4\int_0^{t\wedge S_\ve} \sqrt{L^s}\,\dd \wt B^\ve_s=4\int_0^{t\wedge S_\ve} \sqrt{L^s}\,\dd B_s,$$
since $\wt B^\ve_{\cdot\wedge S_\ve}= B^\ve_{\cdot\wedge S_\ve}= B_{\cdot\wedge S_\ve}$. Therefore, we get
\begin{equation}
\label{SDE-approx}
\dot L^{t\wedge S_\ve}= \dot L^0 + 4\int_0^{t\wedge S_\ve} \sqrt{L^s}\,\dd B_s + \int_0^{t\wedge S_\ve} g(L^s,\frac{1}{2}\dot L^s)\,\dd s.
\end{equation}
When $\ve \to 0$, $\dot L^{t\wedge S_\ve}$ converges to $\dot L^{t\wedge R}$ and 
$\int_0^{t\wedge S_\ve} \sqrt{L^s}\,\dd B_s$ converges to $\int_0^{t\wedge R} \sqrt{L^s}\,\dd B_s$ in probability. It 
follows that $\int_0^{t\wedge S_\ve} g(L^s,\frac{1}{2}\dot L^s)\,\dd s$ also converges in probability to a finite random variable.
By Lemma \ref{tech-SDE}, this is only possible if 
$$\int_0^{t\wedge R} g(L^s,\frac{1}{2}\dot L^s)\,\mathbf{1}_{\{g(L^s,\frac{1}{2}\dot L^s)>0\}}\,\dd s <\infty\,\quad\hbox{a.s.},$$
and therefore by the same lemma, 
$$\int_0^{t\wedge R} |g(L^s,\frac{1}{2}\dot L^s)|\,\dd s <\infty\,\quad\hbox{a.s.},$$
which by \eqref{Lsupp} gives the first assertion in Theorem \ref{main-th}. 
We may now let $\ve\to 0$ in \eqref{SDE-approx}, to conclude that
$$\dot L^{t\wedge R}= \dot L^0 + 4\int_0^{t\wedge R} \sqrt{L^s}\,\dd B_s + \int_0^{t\wedge R} g(L^s,\frac{1}{2}\dot L^s)\,\dd s.$$
Since $L^s=\dot L^s=0$ when $s>R$ by \eqref{Lsupp}, this implies the stochastic differential equation \eqref{main-SDE}.

It remains to establish the pathwise uniqueness claim. Let $(X^x,Y^x)$ be any solution to \eqref{main-SDE} such that $(X^0,Y^0)=(L^0,\dot L^0)$ and $(X^x,Y^x)=(X^{R'},Y^{R'})$ for all $x>R'=\inf\{x\ge 0:X^x=0\}$.  
The smoothness of $p_t(y)$ in $(t,y)\in(0,\infty)\times \R$ and strict positivity of $p_t(y)$ for $t>0$ show that $g(t,y)$ is 
Lipschitz  on $[1/K,K]\times [-K,K]$, as is $(t,y)\to \sqrt t$. The classical proof of pathwise uniqueness in It\^o equations with locally Lipschitz coefficients (e.g. Theorem~3.1 in Chapter IV of \cite{IW}) now shows that if $T_K$ is as in \eqref{TKdef} and $T'_K$ is the analogous stopping time for $(X,Y)$, then $T_K=T'_K$ and $(X^{x\wedge T'_K},Y^{x\wedge T'_K})=(L^{x\wedge T_K},\dot L^{x\wedge T_K})$ for all $x\ge 0$ a.s. Then $T'_K=T_K\uparrow R<\infty$ a.s.,
and taking limits along $\{T_K\}$, we see that $R=R'$, $(X^R,Y^R)=(L^R,\dot L^R)=(0,0)$ and $(X^{x\wedge R},Y^{x\wedge R})=(L^{x\wedge R},\dot L^{x\wedge R})$ for all $x\ge 0$ a.s.  It therefore follows that $(X,Y)=(L,\dot L)$ a.s. (both are $(0,0)$ for $x>R$) and the pathwise uniqueness claim is proved.
\endproof

We now show how a transformation of the state space and random time change can reduce the SDE \eqref{main-SDE} to a simple one-dimensional diffusion. We will only use the equation \eqref{main-SDE} and standard stochastic analysis in this discussion.  In particular, we could replace $(L^x,\dot L^x)$ by any solution to \eqref{main-SDE} in $[0,\infty)\times\R$ starting from an arbitrary initial condition in $(0,\infty)\times\R$.  Recall that $R=\inf\{x\geq 0: L^x=0\}$.

\begin{proposition}
\label{simple-SDE}
{\rm(a)} We have 
\begin{equation}
\label{integral-infinite}
\int_0^R (L^x)^{-1/3} \dd x=\infty\quad\hbox{a.s.},
\end{equation}
and therefore can introduce the time change
$$\tau(t)=\inf\{x\geq 0: \int_0^x (L^y)^{-1/3} \dd y\geq t\}<R, \quad t\geq 0.$$
{\rm(b)} Set $Z^x:=\dot L^x(L^x)^{-2/3}$ for every $x\in[0,R)$, and $\tilde Z_t :=Z^{\tau(t)}$ and $\tilde L_t:= L^{\tau(t)}$ for every $t\geq 0$. 
The process $(\wt Z_t,\wt L_t)_{t\geq 0}$ is the pathwise unique solution of the equation
\begin{align}\label{SDE4}
 \tilde Z_t&=\tilde Z_0+4W_t+\int_0^t b(\tilde Z_s)\, \dd s\\
\label{SDE4b} \tilde L_t&=\tilde L_0+\int_0^t\tilde L_s\tilde Z_s\,\dd s,
 \end{align}
where $W$ is a linear Brownian motion, and, for $z\in \R$,
\begin{equation}\label{driftasymp}
b(z):= 8\,\frac{p_1'}{p_1}\Bigl(\frac{z}{2}\Bigl)-\frac{2}{3}\,z^2=-\frac{2}{3}\mathrm{sgn}(z)z^2+O\Bigl(\frac{1}{|z|}\Bigr)\text{ as }z\to\pm\infty.
\end{equation}
{\rm(c)} The process $(\wt Z_t)_{t\geq 0}$ is the pathwise unique solution of \eqref{SDE4} and is a recurrent one-dimensional diffusion process. 
As $t\to\infty$, $\wt Z_t$ converges weakly (in fact, in total variation) to its unique 
invariant probability measure $\nu(\dd z)=Cp_1(\frac{z}{2})^2 \exp(-\frac{z^3}{36})\,\dd z$, where $C>0$. Moreover, 
 \begin{equation}\label{expsde}
 \tilde L_t=\tilde L_0\exp\Bigl(\int_0^t\tilde Z_s\,\dd s\Bigr)\text{ for all }t\geq 0.
 \end{equation}

\end{proposition}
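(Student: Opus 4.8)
The plan is to use only the stochastic differential equation \eqref{main-SDE} together with It\^o calculus and classical one-dimensional diffusion theory. The first step is to apply It\^o's formula to $Z^x=\dot L^x(L^x)^{-2/3}$ on the random interval $[0,R)$, on which $L^x>0$ and is bounded below on compact subintervals. Since $x\mapsto L^x$ has locally finite variation there is no cross term and no $\partial_l^2$ term, and since $m\mapsto ml^{-2/3}$ is linear there is no $\partial_m^2$ term either, so
\[
\dd Z^x=-\tfrac23(\dot L^x)^2(L^x)^{-5/3}\,\dd x+(L^x)^{-2/3}\Bigl(4\sqrt{L^x}\,\dd B_x+g\bigl(L^x,\tfrac12\dot L^x\bigr)\,\dd x\Bigr).
\]
Using the scaling identity $p_t(y)=t^{-2/3}p_1(yt^{-2/3})$ (equivalently \eqref{gformu}) one finds $g(L^x,\tfrac12\dot L^x)=8(L^x)^{1/3}\frac{p_1'}{p_1}(\tfrac12 Z^x)$, while $(\dot L^x)^2(L^x)^{-5/3}=(Z^x)^2(L^x)^{-1/3}$, whence
\[
\dd Z^x=4(L^x)^{-1/6}\,\dd B_x+(L^x)^{-1/3}\,b(Z^x)\,\dd x ,
\]
with $b(z)=8\frac{p_1'}{p_1}(z/2)-\frac23 z^2$ as in \eqref{driftasymp}. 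Here the asymptotics of $b$ follow from \eqref{rtpt} for the right tail and, for the left tail, from the estimate $p_1(-x)=\exp(-\tfrac29 x^3+o(x^3))$ for the spectrally positive stable density, which comes from the standard exponential Chebyshev/saddle-point bound with Laplace exponent $\psi(\lambda)=\sqrt{2/3}\,\lambda^{3/2}$; note also that $p_1>0$ and $p_1\in C^\infty$ make $b$ smooth, hence locally Lipschitz, on all of $\R$.

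Next I would set $A(x)=\int_0^x(L^y)^{-1/3}\,\dd y$ and $T^\ast:=A(R-)=\int_0^R(L^y)^{-1/3}\,\dd y\in(0,\infty]$; then $\tau$ is the $C^1$ increasing inverse of $A$ on $[0,T^\ast)$, with $\tau'(t)=(L^{\tau(t)})^{1/3}$. A change of variables in the displayed equation for $Z^x$ shows that on $[0,T^\ast)$ the process $\wt Z_t=Z^{\tau(t)}$ satisfies $\wt Z_t=\wt Z_0+\wt N_t+\int_0^t b(\wt Z_s)\,\dd s$, where $\wt N_t$ is a continuous local martingale with $\langle\wt N\rangle_t=16\int_0^{\tau(t)}(L^y)^{-1/3}\,\dd y=16t$; by L\'evy's characterization $\wt N_t=4W_t$ for a Brownian motion $W$, which gives \eqref{SDE4}. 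Likewise, from $\dd\log L^x=(\dot L^x/L^x)\,\dd x$ and $\tau'(t)=(L^{\tau(t)})^{1/3}$ one gets $\frac{\dd}{\dd t}\log\wt L_t=\wt Z_t$, i.e. \eqref{SDE4b} and \eqref{expsde}, all on $[0,T^\ast)$. To prove \eqref{integral-infinite}, i.e. $T^\ast=\infty$ a.s., I argue by contradiction: if $T^\ast<\infty$, then since $b$ is locally Lipschitz and $zb(z)\to-\infty$ as $z\to\pm\infty$ (by \eqref{driftasymp}), equation \eqref{SDE4} admits a non-exploding global solution driven by the continuous extension of $W$, and pathwise uniqueness forces $\wt Z$ to coincide with it on $[0,T^\ast)$; in particular $\sup_{t<T^\ast}|\wt Z_t|<\infty$, so $\int_0^{T^\ast}|\wt Z_s|\,\dd s<\infty$. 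But $\wt L_t=L^{\tau(t)}\to L^R=0$ as $t\uparrow T^\ast$ (using $\tau(t)\uparrow R$, continuity of $L$, $L^R=0$ from \eqref{Lsupp}, and $\wt L_0=L^0>0$ a.s.), so by \eqref{expsde} $\int_0^t\wt Z_s\,\dd s\to-\infty$, a contradiction. Hence $T^\ast=\infty$, which is \eqref{integral-infinite}, and all the identities above now hold on $[0,\infty)$; this yields (a) and (b), the pathwise uniqueness in (b) being immediate since \eqref{SDE4} is autonomous in $\wt Z$ with locally Lipschitz non-exploding coefficients, after which $\wt L$ is determined by $\wt L_t=\wt L_0\exp(\int_0^t\wt Z_s\,\dd s)$.

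For part (c), the pathwise uniqueness of \eqref{SDE4} and the formula \eqref{expsde} have just been established, so it remains to analyse the one-dimensional diffusion $\wt Z$. I would compute its scale function from $s'(z)=\exp(-\tfrac18\int_0^z b(u)\,\dd u)$ and note that $\int_0^z b(u)\,\dd u=16\log\frac{p_1(z/2)}{p_1(0)}-\tfrac29 z^3\to-\infty$ as $z\to\pm\infty$ (as $z\to+\infty$ because $p_1$ is bounded; as $z\to-\infty$ because $16\log p_1(z/2)\sim\tfrac49 z^3$ by the left-tail estimate, which dominates $-\tfrac29 z^3$), so $s(\pm\infty)=\pm\infty$, the boundaries $\pm\infty$ are inaccessible, and $\wt Z$ is recurrent. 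Its speed measure has density proportional to $1/s'(z)\propto p_1(z/2)^2\exp(-z^3/36)$, which has finite total mass (decaying like $\exp(-z^3/36)$ as $z\to+\infty$ and like $\exp(-|z|^3/36)$ as $z\to-\infty$, again by the left-tail estimate), so $\wt Z$ is positive recurrent with unique invariant probability $\nu(\dd z)=Cp_1(z/2)^2\exp(-z^3/36)\,\dd z$; convergence of the law of $\wt Z_t$ to $\nu$ in total variation then follows from the classical ergodic theory of positive-recurrent one-dimensional diffusions with smooth, non-degenerate coefficients.

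The routine parts are the It\^o computation, the time change, and the diffusion analysis in (c). The main obstacle is the proof of \eqref{integral-infinite} in the second step: one cannot estimate $\int_0^R(L^x)^{-1/3}\,\dd x$ directly, and instead must exploit the non-explosion of the \emph{auxiliary} equation \eqref{SDE4} together with the exponential representation \eqref{expsde} and the fact that $L$, hence $\wt L$, vanishes continuously at $R$. A secondary technical point is the left-tail estimate for the stable density $p_1$, needed both for \eqref{driftasymp} and for the recurrence/positive-recurrence dichotomy.
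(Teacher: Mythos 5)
Your proof follows the same overall strategy as the paper: apply It\^o's formula to $Z^x=\dot L^x(L^x)^{-2/3}$, time-change via $A(x)=\int_0^x(L^y)^{-1/3}\,\dd y$, identify $W$ as a Brownian motion via L\'evy's characterization, prove $T^*=\infty$ through non-explosion of the autonomous SDE \eqref{SDE4} combined with the exponential formula \eqref{expsde} and $L^R=0$, and analyze the diffusion in (c) via scale function and speed measure. Your variant of the non-explosion argument (by contradiction, using $\lim_{t\uparrow T^*}\wt L_t=0$ rather than $\liminf$, and deducing a bounded solution whose exponential integral cannot blow up) is equivalent to the paper's version and correct.

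There is, however, one genuine gap, and it is in the justification of the left-tail behavior in \eqref{driftasymp}. You claim that $b(z)=-\tfrac23\,\mathrm{sgn}(z)z^2+O(1/|z|)$ as $z\to-\infty$ ``from the estimate $p_1(-x)=\exp(-\tfrac29x^3+o(x^3))$, which comes from the standard exponential Chebyshev/saddle-point bound.'' But $b$ involves the ratio $p_1'/p_1$, and you therefore need an asymptotic for $\dfrac{\dd}{\dd y}\log p_1(y)$, not merely for $\log p_1(y)$. A density estimate of the form $\log p_1(-x)=-\tfrac29x^3+o(x^3)$ does not let you differentiate the asymptotic; in general one cannot pass from $f(x)\sim g(x)$ to $f'(x)\sim g'(x)$ without additional structure (monotonicity of the error, an explicit remainder, etc.), and the Chebyshev bound supplies none of that. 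Indeed, even your non-explosion argument, which is where you actually invoke $zb(z)\to-\infty$, would only need $p_1'/p_1(y)=\tfrac23y^2+o(y^2)$ as $y\to-\infty$, but you have not established even this weaker statement. The paper avoids the issue by using the exact Airy representation \eqref{Airatio} and the classical expansions of $\mathrm{Ai}$ and $\mathrm{Ai}'$, which give \eqref{ltpt} with a precise $O(1/y^4)$ remainder. To close the gap you should either cite the known asymptotic expansions for $p_1$ \emph{and} its derivative for one-sided stable laws (e.g.\ in Zolotarev's Chapter 2), or reproduce the paper's Airy computation, or rewrite the non-explosion step via Feller's test using the scale function $s'(z)=p_1(0)^2 p_1(z/2)^{-2}\exp(z^3/36)$, since $\int_0^z b(u)\,\dd u=16\log\frac{p_1(z/2)}{p_1(0)}-\tfrac29z^3$ involves only $p_1$ itself and your density estimate then genuinely suffices. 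The remainder of (c) (recurrence, finiteness of the speed measure, invariant density, total-variation convergence) requires only the $p_1$ asymptotics and is fine as written.
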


\rem It is interesting to compare \eqref{integral-infinite} with Hong's results \cite{Hon} showing that
$$\lim_{y\uparrow R}\frac{\log (L^y)}{\log(R-y)}=3\quad\hbox{a.s.}$$

\proof It will be useful to analyze the left tail of $p'_1/p_1$ and so give a counterpart of the $O(1/y)$ right tail behavior in \eqref{rtpt}. One argues
just as before, using the representation in terms of Airy functions (see \eqref{Airatio} and \eqref{rtpt}). In fact the calculation using the asymptotics of $\text{Ai}$ and $\text{Ai}'$, is now easier, but the behavior is quite different:
\begin{equation}\label{ltpt}
\frac{p_1'}{p_1}(y)=\frac{2}{3}y^2+\frac{1}{2y}+O\Bigl(\frac{1}{y^4}\Bigr)\ \ \text{ as }y\to-\infty.
\end{equation}
From \eqref{rtpt} and \eqref{ltpt}, we obtain the asymptotics in \eqref{driftasymp}. 
Then, by \eqref{gformu} we may write \eqref{main-SDE} as
\begin{equation}\label{SDE2}
\begin{aligned}
\dot L^x&=\dot L^0 + 4\int_0^x \sqrt{{L^y}}\,\dd B_y + \int_0^x 8(L^y)^{1/3}\frac{p'_1}{p_1}\Bigl(\frac{Z^y}{2}\Bigr)\,\dd y\\
L^x&=L^0+\int_0^x\dot L^y\,\dd y,
\end{aligned}
\end{equation}
where $Z^x=0$ for $x\geq R$ by convention.
We analyze the above using the coordinates $(Z^x,L^x)$, which by It\^o calculus satisfy for $x<R$,
\begin{equation}\label{SDE3}
\begin{aligned}
Z^x&=Z^0+4\int_0^x(L^y)^{-1/6}\,\dd B_y+\int_0^x(L^y)^{-1/3}\,b(Z^y)\,\dd y\\
L^x&=L^0+\int_0^x(L^y)^{-1/3}L^y Z^y\,dy.
\end{aligned}
\end{equation}
The precise meaning of the above is that it holds for the equation stopped at $R_\ve=\inf\{y\ge0:L^y\le\ve\}$ for all $\ve>0$. We set $\rho:=\int_0^R (L^x)^{-1/3} \dd x$ and now use the random time change
$\tau(t)$ introduced in part (a) of the proposition, observing that this random change makes sense only for $t<\rho$ (at present, we do not yet know that $\rho=\infty$ a.s.). If $\tilde Z_t :=Z^{\tau(t)}$ and $\tilde L_t:= L^{\tau(t)}$
for $t<\rho$, it follows that
 \begin{align}\label{SDE5}
 \tilde Z_t&=\tilde Z_0+4W_t+\int_0^t b(\tilde Z_s)\, \dd s\\
\label{SDE5b} \tilde L_t&=\tilde L_0+\int_0^t\tilde L_s\tilde Z_s\,\dd s,
 \end{align}
 where $W_t=\int_0^{\tau(t)}(L^y)^{-1/6}\, \dd B_y$.
  Again the above equation means that for all $\ve>0$, it holds for the equation stopped at $\rho_\ve:=\tau^{-1}(R_\ve)=\int_0^{R_\ve}(L^x)^{-1/3}\dd x=\inf\{t\ge 0:\tilde L_t\le \ve\}$. Then $W_{t\wedge {\rho_\ve}}=\int_0^{\tau(t\wedge \rho_\ve)}(L^y)^{-1/6}\, \dd B_y$ is a continuous local martingale, with quadratic variation
 \begin{equation}\label{Wqv}
 \langle W_{\cdot\wedge \rho_\ve},W_{\cdot\wedge \rho_\ve}\rangle_t=\int_0^{\tau(t\wedge \rho_\ve)}(L^y)^{-1/3}\, \dd y=t\wedge \rho_\ve.
 \end{equation}
Note that \eqref{SDE5b} implies that \eqref{expsde} holds for $t<\rho$. 

By the same method as in the proof of Theorem \ref{main-th} (compare \eqref{Wqv} with \eqref{Bveqv}), we may assume that $W_t$
is defined for every $t\geq 0$ and is a linear Brownian motion. It follows from the definition of $\rho$ that 
$\liminf_{t\uparrow\rho}\tilde L_t=0$, and therefore
by \eqref{expsde} (for $t<\rho$) we have 
$\liminf_{t\uparrow\rho}\tilde Z_t=-\infty$ a.s. on $\{\rho<\infty\}$. Therefore $\tilde Z$ is the unique solution of \eqref{SDE4} up to its explosion time $\rho\le \infty$ (again use Theorem~3.1 in Chapter~IV of \cite{IW}).  By \eqref{driftasymp} the explosion time of $\tilde Z$ must be infinite a.s. (see Theorem~3.1(1) of Chapter~VI of \cite{IW}). 
We conclude that $\rho=\infty$ a.s., giving part (a)  of the 
proposition, as well as \eqref{expsde} and the fact that $\tilde Z$ is the pathwise unique solution of \eqref{SDE4} in (c). 

The other assertions are now easily derived. Equations \eqref{SDE4} and \eqref{SDE4b} are just \eqref{SDE5} and \eqref{SDE5b}
written for every $t\geq 0$.  Pathwise uniqueness for the system \eqref{SDE4}, \eqref{SDE4b} again follows from Theorem~3.1 in Chapter IV of \cite{IW} by the local Lipschitz nature of the drift coefficient. This completes the proof of (b).

By \eqref{SDE4} above and (2) of Chapter 33 of \cite{Ka}, $\tilde Z$ is a one-dimensional diffusion with scale function 
$$s(x)=\int_0^x\exp\Bigl(-\int_0^y\frac{b(z)}{8}\, \dd z\Bigr)\, \dd y=c\int_0^xp_1\Big(\frac{y}{2}\Big)^{-2}\exp\Bigl(\frac{y^3}{36}\Bigl)\, \dd y,$$
where $c>0$ is a constant. The scale function
maps $\R$ onto $\R$ (as is clear from the above asymptotics for $b$ in \eqref{driftasymp}), and in particular, $\tilde Z$ is a recurrent diffusion (all points are visited w.p. $1$ from every starting point).  
From Chapter~33 of \cite{Ka} (see the discussions prior to Theorem~33.1 and after Theorem~33.9 in \cite{Ka}), the speed measure of the diffusion $s(\tilde Z_t)$ has density $(4s'\circ s^{-1}(y))^{-2}$, and is thus a finite measure  since 
$$\int_\R (s'\circ s^{-1}(y))^{-2}\,\dd y= \int_\R (s'(x))^{-1}\,\dd x <\infty,$$
using \eqref{driftasymp} for the last. 
By Lemmas 33.17 and 33.19 in \cite{Ka}, the diffusion $s(\tilde Z_t)$ has a unique invariant measure which is proportional to its speed measure, and starting at any initial point, will converge weakly to it (in fact in total variation) as $t\to\infty$. Therefore  $\tilde Z_t$ has a unique invariant probability with density proportional to $1/s'(x)$, and will converge to it in the same sense. The proof of (c) is complete.\endproof

 The asymptotics for $p_1$ are $p_1(x)\sim c_-\sqrt{|x|} \exp\Bigl(-\frac{2}{9}|x|^3\Bigr)$ as $x\to-\infty$ and $p_1(x)\sim c_+ |x|^{-5/2}$ as $x\to\infty$, where $c_\pm>0$, and $\sim$ means the ratio approaches $1$ (e.g. \cite{CC} but recall our $p_1$ differs by a scaling constant).  This shows that the invariant density of $\tilde Z$ satisfies
\begin{equation*}
f(x)\sim
\begin{cases}C_-|x|\exp\Bigl(-\frac{|x|^3}{36}\Bigr)&\text{ as $x\to-\infty$}\\
C_+|x|^{-5}\exp\Bigl(-\frac{|x|^3}{36}\Bigr)&\text{ as $x\to+\infty$},
\end{cases}
\end{equation*}
where $C_\pm>0$. 

 In terms of our original local time the weak convergence in (c) means that 
\[\frac{\dot L^{\tau(t)}}{(L^{\tau(t)})^{2/3}}\ \text{converges weakly to }Cp_1\Bigl(\frac{x}{2}\Bigr)^2\exp\Bigl(-\frac{x^3}{36}\Bigr)\dd x\ \text{ as }t\to\infty,\]
where $\tau(t)\uparrow R$ as $t\to\infty$. Again this can be compared with the cubic behavior of $L^x$ near its extinction time from \cite{Hon}. 

Note in the above that $\tau'(t)=\tilde L_t^{1/3}$ is recoverable from $(L^0,\tilde Z)$ by \eqref{expsde}, and so one can reverse the above construction and build $(L^x,\dot L^x)$
from the diffusion $\tilde Z$ and a given initial condition $L^0>0$.  
The following proposition is immediate from the discussion above and uniqueness in law in \eqref{SDE4}.

\begin{proposition}\label{thm:1ddiffusion}
On a filtered probability space $(\Omega,\f,(\f_t),P)$, let $W$ be an  $({\mathcal F}_t)$-Brownian motion and let $(\Lambda^0,\tilde{\mathfrak{Z}}_0)$ be a  pair of ${\mathcal F}_0$-measurable random variables
with values in $(0,\infty)\times\R$.  There is a pathwise unique solution, $(\tilde{\mathfrak{Z}}_t)_{t\geq 0}$, to $\dd \tilde{\mathfrak{Z}}_t=4\dd W_t + b(\tilde{\mathfrak{Z}}_t)\dd t$ with initial value $\tilde{\mathfrak{Z}}_0$.
For every $t>0$, set
\[\tilde\Lambda_t=\Lambda^0\exp\Bigl(\int_0^t\tilde{\mathfrak{Z}}_s\, \dd s\Big).\] 
Then the following holds. 

\noindent{\rm(a)} $\tilde \Lambda_\infty:=\lim_{t\to\infty}\tilde \Lambda_t=0$, $\lim_{t\to\infty}(\tilde \Lambda_t)^{2/3}\tilde{\mathfrak{Z}}_t=0$, and 
 $R=\int_0^\infty (\tilde \Lambda_s)^{1/3}\,\dd s<\infty$ a.s. 
 
 \noindent{\rm(b)} Introduce the random time change
 \[\int_0^{\sigma(x)} (\tilde \Lambda_s)^{1/3}\,\dd s=x\text{ for }x<R,\text{ and set }\sigma(x)=\infty\text{ for }x\ge R.\]
 Define $\Lambda^x=\tilde \Lambda_{\sigma(x)}$ for $x> 0$ and 
 \begin{equation*}
 \mathfrak{Z}^x=\begin{cases}\tilde{\mathfrak{Z}}_{\sigma(x)}&\text{ if }x<R\\
 0&\text{ if }x\ge R.
 \end{cases}
 \end{equation*}
Then $R=\inf\{x\ge 0:\Lambda^x=0\}$ and $x\mapsto \Lambda^x$ is continuously differentiable on $[0,\infty)$ with derivative $\dot \Lambda^x=\mathfrak{Z}^x(\Lambda^x)^{2/3}$ for $x\ge 0$, where we take the right-hand derivative at $x=0$.

\noindent{\rm(c)} By enlarging our probability space, if necessary, we may assume there is a filtration $({\mathcal G}_x)_{x\ge 0}$ and a $({\mathcal G}_x)$-Brownian motion $(B_x)_{x\geq 0}$ such that $( \Lambda^x,\dot\Lambda^x)_{x\ge 0}$ is the $({\mathcal G}_x)$-adapted solution of \eqref{main-SDE}, stopped at $R$.
\end{proposition}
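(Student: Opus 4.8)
The plan is to run the construction in the proof of Proposition~\ref{simple-SDE} in reverse: given the one-dimensional diffusion $\tilde{\mathfrak{Z}}$, define $\tilde\Lambda$ by \eqref{expsde}, undo the time change, and check that the resulting pair solves \eqref{main-SDE}. First, since $p_1$ is smooth and strictly positive, $b$ is locally Lipschitz, so $\dd\tilde{\mathfrak{Z}}_t=4\,\dd W_t+b(\tilde{\mathfrak{Z}}_t)\,\dd t$ has a pathwise unique solution up to its explosion time (Theorem~3.1, Ch.~IV of \cite{IW}); the drift asymptotics \eqref{driftasymp}, $b(z)\sim-\tfrac23\mathrm{sgn}(z)z^2$, force the explosion time to be infinite a.s.\ (Theorem~3.1(1), Ch.~VI of \cite{IW}), so $\tilde{\mathfrak{Z}}$ is defined for all $t\geq0$ and is unique in law. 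The part of the proof of Proposition~\ref{simple-SDE}(c) that analyses $\tilde{\mathfrak{Z}}$ through its scale function and speed measure uses only \eqref{SDE4} and standard one-dimensional diffusion theory, hence applies verbatim here: $\tilde{\mathfrak{Z}}$ is recurrent, its scale function is onto $\R$, its speed measure is finite, it admits the unique invariant probability $\nu$ with the stated density and tails, and $\tilde{\mathfrak{Z}}_t\to\nu$ in total variation. The identity \eqref{expsde} for $\tilde\Lambda_t$ is immediate from $\dd\tilde\Lambda_t=\tilde\Lambda_t\tilde{\mathfrak{Z}}_t\,\dd t$.

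For part (a), the three assertions --- $\int_0^\infty\tilde{\mathfrak{Z}}_s\,\dd s=-\infty$ (hence $\tilde\Lambda_\infty=0$), $(\tilde\Lambda_t)^{2/3}\tilde{\mathfrak{Z}}_t\to0$, and $R=\int_0^\infty(\tilde\Lambda_s)^{1/3}\,\dd s<\infty$ --- each say that a path functional $F$ of $(\tilde{\mathfrak{Z}}_t)_{t\geq0}$ equals a prescribed constant a.s., and $F$ has the feature that its value is unchanged if one discards an initial segment $[0,T]$ of the path, provided $\int_0^T\tilde{\mathfrak{Z}}_s\,\dd s$ is finite (true for every finite $T$) --- here $\tilde\Lambda_0>0$ enters only as a multiplicative constant. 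Consequently, by recurrence and the strong Markov property (decompose at the first time $\tilde{\mathfrak{Z}}$ hits a fixed level $z'$), the probability $P_z(F=c)$ is the same for every starting point $z$. On the other hand, applying Proposition~\ref{simple-SDE} to the local times $(L^x,\dot L^x)$ of super-Brownian motion --- where $\tilde L_t=L^{\tau(t)}$ with $\tau(t)\uparrow R$, and \eqref{Lsupp} together with Hong's cubic behaviour of $L$ near $R$ give $\tilde L_t\to0$, $\tilde L_t^{2/3}\tilde Z_t=\dot L^{\tau(t)}\to0$, and $R=\tau(\infty)=\int_0^\infty(\tilde L_s)^{1/3}\,\dd s<\infty$ by the chain rule --- shows this common probability equals $1$ in each case; hence part~(a) holds for the arbitrary initial condition $(\Lambda^0,\tilde{\mathfrak{Z}}_0)$ as well. (Alternatively the ergodic theorem gives $\tfrac1t\int_0^t\tilde{\mathfrak{Z}}_s\,\dd s\to\int z\,\nu(\dd z)$, and one would then have to verify directly that this mean is negative.)

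For part (b), the additive functional $A_t:=\int_0^t(\tilde\Lambda_s)^{1/3}\,\dd s$ is continuous and strictly increasing with $A_0=0$ and $A_\infty=R\in(0,\infty)$; its inverse on $[0,R)$ is $\sigma(x)$, which is an $(\mathcal F_t)$-stopping time for each $x$ and is $C^1$ with $\sigma'(x)=(\tilde\Lambda_{\sigma(x)})^{-1/3}=(\Lambda^x)^{-1/3}$. Differentiating $\Lambda^x=\tilde\Lambda_{\sigma(x)}$ and using \eqref{expsde} gives $\tfrac{\dd}{\dd x}\Lambda^x=\tilde\Lambda_{\sigma(x)}\tilde{\mathfrak{Z}}_{\sigma(x)}\sigma'(x)=(\Lambda^x)^{2/3}\mathfrak{Z}^x$ on $[0,R)$; at $x=R$ both sides vanish with the required continuity supplied by the limits $\tilde\Lambda_t\to0$ and $(\tilde\Lambda_t)^{2/3}\tilde{\mathfrak{Z}}_t\to0$ from (a), so $\Lambda$ is $C^1$ on $[0,\infty)$ with $\dot\Lambda^x=(\Lambda^x)^{2/3}\mathfrak{Z}^x$, and $R=\inf\{x\geq0:\Lambda^x=0\}$. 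For part (c), set $\mathcal G_x:=\mathcal F_{\sigma(x)}$ (completed); then $(W_{\sigma(x)})_{x<R}$ is a continuous $(\mathcal G_x)$-local martingale with $\langle W_{\sigma(\cdot)}\rangle_x=\sigma(x)=\int_0^x(\Lambda^y)^{-1/3}\,\dd y$, so $B_x:=\int_0^x(\Lambda^y)^{1/6}\,\dd W_{\sigma(y)}$ is a continuous $(\mathcal G_x)$-local martingale with $\langle B\rangle_x=x$, i.e.\ a Brownian motion for $x<R$, which we extend to all $x\geq0$ by adjoining an independent Brownian motion exactly as in the proof of Theorem~\ref{main-th}. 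Time-changing \eqref{SDE4} gives $\dd\mathfrak{Z}^x=4(\Lambda^x)^{-1/6}\,\dd B_x+(\Lambda^x)^{-1/3}b(\mathfrak{Z}^x)\,\dd x$ for $x<R$; applying It\^o's product rule to $\dot\Lambda^x=(\Lambda^x)^{2/3}\mathfrak{Z}^x$ (the bracket term vanishing since $\Lambda$ has bounded variation) and using $\tfrac23z^2+b(z)=8\,\tfrac{p_1'}{p_1}(z/2)$ together with the scaling relation \eqref{gformu} for $g$, one obtains $\dd\dot\Lambda^x=4\sqrt{\Lambda^x}\,\dd B_x+g(\Lambda^x,\tfrac12\dot\Lambda^x)\,\dd x$ for $x<R$ (as in Proposition~\ref{simple-SDE}, these computations are done first for the process stopped at $\inf\{x:\Lambda^x\leq\ve\}$ and then one lets $\ve\downarrow0$). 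Since $\Lambda^x=\dot\Lambda^x=0$ and $g(0,\cdot)=0$ for $x\geq R$, this is precisely \eqref{main-SDE} stopped at $R$, and $(\Lambda^x,\dot\Lambda^x)$ is $(\mathcal G_x)$-adapted.

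The only genuinely substantive step is the one in part~(a): everything else is the routine reversal of the time-change and It\^o computations of Proposition~\ref{simple-SDE}. The obstacle is that the divergence $\int_0^\infty\tilde{\mathfrak{Z}}_s\,\dd s=-\infty$ together with the accompanying integrability $R<\infty$ is a statement about the drift of an additive functional of $\tilde{\mathfrak{Z}}$ --- equivalently about the sign of $\int z\,\nu(\dd z)$ --- which is not visible for free from \eqref{SDE4}; I would obtain it, as above, by transporting the corresponding a.s.\ statements for super-Brownian local times through uniqueness in law and the recurrence of $\tilde{\mathfrak{Z}}$, rather than by a direct tail estimate of $\nu$.
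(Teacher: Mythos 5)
Your proposal is correct and follows exactly the route the paper itself indicates: the paper gives no detailed proof of this proposition, declaring it immediate from the preceding discussion (i.e.\ reversing the time change and It\^o computations of Proposition \ref{simple-SDE}, using \eqref{expsde} to recover $\tau'$) together with uniqueness in law for \eqref{SDE4}, and that is precisely what you carry out, including the localization at $\inf\{x:\Lambda^x\le\ve\}$ and the adjunction of an auxiliary Brownian motion beyond $R$ as in the proof of Theorem \ref{main-th}. The one ingredient you supply beyond the paper's hint --- the observation that the three a.s.\ statements in (a) are functionals of the path that are insensitive to discarding a finite initial segment and to the positive constant $\Lambda^0$, so that by recurrence and the strong Markov property their probability is starting-point independent and can be pinned to $1$ by the super-Brownian realization --- is a legitimate (indeed necessary) way of making the paper's appeal to ``uniqueness in law'' rigorous for an arbitrary initial condition.
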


\end{document}